\newcommand\blfootnote[1]{%
  \begingroup
  \renewcommand\thefootnote{}\footnote{#1}%
  \addtocounter{footnote}{-1}%
  \endgroup
}
 \newcommand\scalemath[2]{\scalebox{#1}{\mbox{\ensuremath{\displaystyle #2}}}}
 \newcommand{\subjclass}[2][1991]{%
  \let\@oldtitle\@title%
  \gdef\@title{\@oldtitle\footnotetext{#1 \emph{Mathematics subject classification.} #2}}%
}
\theoremstyle{definition}
\newtheorem{theorem}{Theorem}[section]
\newtheorem*{theorem*}{Theorem}
\newtheorem{definition*}{Definition}[]
\newtheorem{nono-definition}{Definition}[]
\newtheorem{proposition}{Proposition}[section]
\newtheorem{proposition*}{Proposition}
\newtheorem{definition}{Definition}[section]
\newtheorem{lemma}{Lemma}[section]
\newtheorem{example}{Example}[section]
\newtheorem{corollary}{Corollary}[section]
\newtheorem{notation}{Notation}[section]
\newtheorem{remark}{Remark}[section]
\newcommand{\lb}[1]{{#1}}
\newcommand{\lbb}[1]{{#1}}
\newcommand{\cc}[1]{{#1}}
\newcommand{\ccc}[1]{{#1}}
\numberwithin{equation}{section}
\newcommand\restr[2]{{% we make the whole thing an ordinary symbol
  \left.\kern-\nulldelimiterspace % automatically resize the bar with \right
  #1 % the function
  \vphantom{\big|} % pretend it's a little taller at normal size
  \right|_{#2} % this is the delimiter
  }}
\def\Transpose #1{\romannumeral0\expandafter
                  \Mar@Transpose@a\romannumeral`^^@\Mar@DoOneRow #1\\!\\}
\def\Mar@DoOneRow #1\\{\Mar@DoOneRow@a {}#1&^^@&}%
\def\Mar@DoOneRow@a #1#2&{%
    \if^^@\detokenize{#2}\expandafter\@gobble\fi
    \Mar@DoOneRow@a {#1#2\\}%
}%
\def\Mar@Transpose@a #1#2\\{\ifx!#2\expandafter\Mar@FinishTranspose\fi
    \expandafter\Mar@Transpose@b\romannumeral`^^@\Mar@DoOneRow@a {}#2&^^@&#1}
\def\Mar@Transpose@b #1#2^^@\\{\Mar@Join {}#2^^@!#1}
\def\Mar@Join #1#2\\#3!#4\\%
\def\Mar@EndJoin\Mar@Join #1^^@!^^@\\{\Mar@Transpose@a {#1^^@\\}}
\def\Mar@FinishTranspose
\newcommand{\Addresses}{{% additional braces for segregating \footnotesize
  \bigskip
  \footnotesize

  \textsc{Université Cote d’Azur, Inria, 2004 route des Lucioles, 06902 Sophia Antipolis, France.}\par\nopagebreak
  \textit{E-mail address}, \texttt{laurent.buse@inria.fr}

  \medskip

 \textsc{Athena RC, Artemidos 6 kai Epidavrou, 15125 Maroussi , Greece}\par\nopagebreak
  \textit{E-mail address}, (corresponding author): \texttt{ccheca@di.uoa.gr}
}}
\DeclareMathOperator{\Hom}{Hom}
\DeclareMathOperator{\Relint}{Relint}
\DeclareMathOperator{\HF}{HF}
\DeclareMathOperator{\HP}{HP}
\DeclareMathSymbol{\ast}{\mathbin}{symbols}{"03}
\DeclareMathOperator{\sylv}{sylv}
\DeclareMathOperator{\Sylv}{Sylv}
\DeclareMathOperator{\Res}{Res}
\DeclareMathOperator{\sat}{sat}
\DeclareMathOperator{\im}{Im}
\DeclareMathOperator{\Pic}{Pic}
\DeclareMathOperator{\Supp}{Supp}
\DeclareMathOperator{\Spec}{Spec}
\DeclareMathOperator{\Cl}{Cl}
\DeclareMathOperator{\Id}{Id}
\DeclareMathOperator{\Residue}{Residue}
\DeclareMathOperator{\Ker}{Ker}
\providecommand{\keywords}[1]
{
  \small	
  \textbf{\textit{Keywords: }} #1
}
\begin{document}

\fancyhead[CO]{\scriptsize Toric Sylvester forms }
\fancyhead[CE]{\scriptsize Laurent Busé and Carles Checa}
\fancyhead[LE]{\scriptsize\thepage}
\fancyhead[RO]{\scriptsize\thepage}
\fancyhead[LO]{}
\fancyhead[RE]{}
\fancyfoot[R]{}
\fancyfoot[C]{}
\fancyfoot[L]{}
\thispagestyle{empty}

\title{Toric Sylvester forms}

\author[1]{Laurent Busé}
\author[2]{Carles Checa}

\affil[1]{Université Côte d'Azur, Inria, France}
\affil[2]{National Kapodistrian University of Athens, Athena RC, Greece}

\maketitle

\begin{abstract}
In this paper, we investigate the structure of the saturation of ideals generated by sparse homogeneous polynomials over a projective toric variety $X$ with respect to the irrelevant ideal of $X$. 
As our main results, we establish a duality property and make it explicit by introducing toric Sylvester forms, under a certain positivity assumption on $X$. In particular, we prove that toric Sylvester forms yield bases of some graded components of $I^{\text{sat}}/I$, where $I$ denotes an ideal generated by $n+1$ generic forms, $n$ is the dimension of $X$ and $I^{\text{sat}}$ \ccc{is} the saturation of $I$ with respect to the irrelevant ideal of the Cox ring of $X$.
Then, to illustrate the relevance of toric Sylvester forms we provide three consequences in elimination theory over smooth toric varieties: (1) we introduce a new family of elimination matrices that can be used to solve sparse polynomial systems by means of linear algebra methods, including overdetermined polynomial systems; (2) by incorporating toric Sylvester forms to the classical Koszul complex associated to a polynomial system, we obtain new expressions of the sparse resultant as a determinant of a complex; (3) we \cc{explore the computation of the toric residue of the product of two forms.}
\end{abstract}

\keywords{sparse polynomial systems, toric geometry, sparse resultants, elimination theory.}

\blfootnote{\textup{2000} \textit{Mathematics Subject Classification}: \textup{05E14,14Q99  
}}

\section{Introduction}

The elimination of variables in a system of homogeneous polynomial equations is deeply connected to the saturation of ideals with respect to a certain geometrically irrelevant ideal. Thus, the search and study of universal generators of the saturation of an ideal generated by generic homogeneous polynomials is an important topic in elimination theory. In the classical literature of the previous century, such universal generators were called \textit{inertia forms} by Hurwitz, Mertens, Van der Waerden and many others, including Zariski; see the references in \cite{Jouanolou91,joanolouformesdinertie} and \cite{Zariski}. As examples, Jacobian determinants and resultants associated to a square homogeneous polynomial system are important inertia forms.

To be more specific, consider the ideal $I = (F_0,\dots,F_n )$ where $F_i$ is the generic homogeneous polynomial of degree $d_i$ in the graded polynomial ring $C = A[x_0,\dots,x_n]$, where $\deg(x_i)=1$ for all $i=0,\ldots,n$ and where $A$ stands for the universal ring of coefficients of the $F_i$'s. 
The saturation of the ideal $I$ with respect to the ideal $\mathfrak{m}=(x_0,\ldots,x_n)$, which we denote by $I^{\sat}=I:\mathfrak{m}^\infty$, is the ideal of inertia forms. In this context, the ideal $\mathfrak{m}$ is the (geometrically) irrelevant ideal of the projective space of dimension $n$ which is associated to $C$. As the elements in $I$ are trivially inertia forms, $I^{\sat}/I$ is the natural quotient to study. It turns out that the Jacobian determinant of the $F_i$'s is a generator, as an $A$-module, of the graded component of $I^{\sat}/I$ in degree $\delta = d_0 + \dots + d_n - (n+1)$ and their resultant is a generator of $I^{\sat}/I$ in degree 0. In order to unravel the structure of $I^{\sat}/I$ in degrees smaller than $\delta$, Jouanolou introduced and studied the formalism of Sylvester forms  \cite{joanolouformesdinertie}. His ideas were based on the fact that for each $\mu = (\mu_{0},\dots,\mu_n)\in \mathbb{N}^{n+1}$ such that $|\mu| := \sum_i \mu_i < \min_i d_i$, each polynomial $F_i$ can be decomposed as  
\begin{equation}\label{eq:decompFi}
F_i = \sum_{j=0}^n x_j^{\mu_j + 1}F_{i,j}
\end{equation}
and one can consider the determinant $\det(F_{i,j})_{0\leq i,j \leq n}$. This latter is called a \textit{Sylvester form}  of the $F_i$'s and denoted by $\Sylv_{\mu}$. Independently of the choice of decompositions \eqref{eq:decompFi}, the class of $\Sylv_{\mu}$ modulo $I$, which is denoted by $\sylv_{\mu}$, gives a nonzero element in $(I^{\sat}/I)_{\delta - |\mu|}$. Moreover, $(I^{\sat}/I)_{\delta - |\mu|}$ is a free $A$-module which can be generated by the Sylvester forms of degree $\delta - |\mu|$. This result is a consequence of a duality property between Sylvester forms and monomials; namely, for all $\nu<\min_i d_i$ we have an isomorphism of $A$-modules
\[ (I^{\sat}/I)_{\delta - \nu} \simeq \Hom_A(C_{\nu},A). \]
More explicitly, this isomorphism corresponds to the equalities
\[ x^{\mu'}\sylv_{\mu} = \begin{cases} \sylv_0 & \textrm{ if } \mu = \mu' \\ 0 & \textrm{ if } \mu \neq \mu'  \end{cases}\]
where $\sylv_0$ is a generator of $(I^{\sat}/I)_{\delta}$. We note that up to a nonzero multiplicative constant, $\sylv_0$ is equal to the class of the Jacobian determinant of the $F_i$'s; see \cite[\S 3.10]{joanolouformesdinertie}. 

The definition and main properties of Sylvester forms have been recently extended to the case of $n+1$ generic multi-homogeneous polynomials, i.e.~of polynomials defining hypersurfaces over a product of projective spaces of total dimension $n$; see \cite{buse2021multigraded}. In this paper, we develop the theory of Sylvester forms in the general setting of homogeneous polynomials in the coordinate ring of a projective toric variety $X_{\Sigma}$. In addition, to illustrate the importance of these forms in elimination theory, we also provide applications to the construction of elimination matrices for overdetermined polynomial systems and to the computation of toric resultants and toric residues. As far as we know, these applications are also new results in the context of multi-homogeneous polynomial systems. In what follows we give a brief overview of the main contributions in this paper. 

Let $\mathbf{k}$ \cc{be a field} and $X_{\Sigma}$ be a $n$-dimensional projective toric variety over $\mathbf{k}$ given by a complete fan $\Sigma$ in a lattice $N$. Let $R$ be the homogeneous coordinate ring of $X_{\Sigma}$ over $\mathbf{k}$, also known as the \textit{Cox ring} of $X_{\Sigma}$, which is graded using the combinatorics of $\Sigma$; see Section \ref{sec:toricgeom} or \cite{coxring} for more details. Assuming that there exists a smooth $n$-dimensional cone  $\sigma \in \Sigma(n)$, we write $x_1,\ldots,x_n$ the variables of $R$ associated to $\sigma$ and we denote by $z_1,\ldots,z_r$ the remaining ones. With these notations, 
a homogeneous polynomial in $X_{\Sigma}$ of degree $\alpha \in \Cl(X_{\Sigma})$, the class group of $X_{\Sigma}$, is an element in the graded component $R_\alpha$ of $R$ in degree $\alpha$; \cc{i.e.} a $\mathbf{k}$-linear combination of monomials $x^{\mu} = x_1^{\mu_1}\cdots x_n^{\mu_n}z_1^{\mu_{n+1}}\cdots z_r^{\mu_{n+r}}$ of degree $\alpha$ \ccc{(we notice that what we call homogeneous polynomials are also sometimes called multi-homogeneous polynomials)}. Now, the \textit{generic} homogeneous polynomial of degree $\alpha$ is the polynomial  
$\sum_{x^{\mu} \in R_\alpha}c_{i,\mu}x^\mu$
where the coefficients $c_{i,\mu}$ are seen as variables. Thus, being given $n+1$ degrees $\alpha_0,\ldots,\alpha_n$, the corresponding generic homogeneous polynomial system over $X_\Sigma$ is given by the $n+1$ homogeneous polynomials 
%is a system of polynomial equations of the form $F_0 = \dots = F_n = 0$ for
\begin{equation}\label{eq:Fiintro}
F_i = \sum_{x^{\mu} \in R_{\alpha_i}}c_{i,\mu}x^\mu \in C = A\otimes_{\mathbf{k}} R =  A[x_1,\dots,x_n,z_1,\dots,z_r], \ i=0,\ldots,n,
\end{equation}
where $A$ is the universal ring of coefficients over $\mathbf{k}$, i.e.~$A= \mathbf{k}[c_{i,\mu} \, : \, x^{\mu} \in R_{\alpha_i}, \ i = 0,\dots,n].$
We define the ideal $I=(F_0,\ldots,F_n)$ and the ideal $\mathfrak{b} = ( \tilde{x}^{\sigma} \,:\, \tilde{x}^{\sigma} = \prod_{\rho \notin \sigma}x_{\rho}, \, \sigma \in \Sigma(n))$, which is the \textit{irrelevant ideal} of $X_\Sigma$. The saturation of $I$ is the ideal of $C$ defined as $I^{\sat} = ( I : \mathfrak{b}^{\infty})$.  

\medskip

The first main result of this paper is the following duality property which is a generalization of \cite[Theorem A]{buse2021multigraded} to the case of a projective toric variety; see Theorem \ref{dualitytheorem}. We set 
$\delta = \alpha_0 + \dots + \alpha_n - K_X \in \Cl(X_{\Sigma})$, where $K_X$ is the anticanonical class of $X_{\Sigma}$.
\begin{theorem*}\label{dualitytheoremintro}
Let $X_{\Sigma}$ be a projective toric variety, let $\sigma \in \Sigma(n)$ be an $n$-dimensional smooth cone and let $\nu \in \Cl(X_{\Sigma})$. Then, with the above notation, there exists a non-empty region $\Gamma \subsetneq \Cl(X_{\Sigma})$ such that if $\delta - \nu \notin  \Gamma$, we have
\[ (I^{\sat}/I)_{\delta - \nu} \simeq \Hom_{A}((C/I)_{\nu},A).\]
\end{theorem*}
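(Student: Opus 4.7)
The plan is to follow the spectral sequence / local duality strategy that underlies the analogous results of Jouanolou for $\mathbb{P}^n$ and of \cite{buse2021multigraded} for products of projective spaces, replacing Serre duality on $\mathbb{P}^n$ by its toric counterpart on $X_\Sigma$. The overarching idea is that $I^{\sat}/I$ is essentially the $0$-th local cohomology at $\mathfrak{b}$ of $C/I$, and we want to relate this to $\Hom_A((C/I)_\nu,A)$ by using $n+1$ generic forms on an $n$-dimensional variety together with the toric Serre/local duality that trades a cohomological shift by $K_X$ against $A$-duality.

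First, I would introduce the Koszul complex $K_\bullet = K_\bullet(F_0,\ldots,F_n;C)$. Since the $F_i$ are $n+1$ generic forms on the $n$-dimensional toric variety $X_\Sigma$, they have no common zeros on $X_\Sigma \times \Spec A$, so the cohomology of $K_\bullet$ is annihilated by a power of $\mathfrak{b}$. Then I would form the double complex $\check{C}^\bullet_{\mathfrak{b}} \otimes_C K_\bullet$ obtained by tensoring $K_\bullet$ with the Čech complex on a system of generators of $\mathfrak{b}$ (the monomials $\widetilde{x}^\sigma$). Running the two spectral sequences of this double complex, one of them computes $H^\bullet_\mathfrak{b}(H_\bullet(K_\bullet))$ and, by the saturation interpretation, its $E_\infty$ page contains $(I^{\sat}/I)$ in homological degree $0$; the other computes the Koszul homology of the local cohomology modules $H^\bullet_\mathfrak{b}(C)$.

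The second step is to identify the region $\Gamma$ as the (combinatorially defined) set of degrees where the spectral sequences collapse to yield a single pairing. Concretely, $\Gamma$ should correspond to degrees where the "exceptional" local cohomology modules $H^i_\mathfrak{b}(C)$ for $0 < i < n$ vanish (or at least do not contribute in the relevant strand), so that the only surviving duality is the one provided by $H^n_\mathfrak{b}(C)$. Toric local duality (the Cox-ring incarnation of Serre duality, as developed by Cox and Mustaţă) then gives a graded isomorphism
\[ H^n_\mathfrak{b}(C)_\beta \simeq \Hom_A(C_{-\beta - K_X}, A) \]
in the appropriate degrees, where the shift by $-K_X$ is exactly what turns a degree $\nu$ into $\delta - \nu$ once one accounts for the total Koszul twist $\alpha_0 + \cdots + \alpha_n$ coming from the top of $K_\bullet$. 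Combining this with the first spectral sequence produces the claimed isomorphism $(I^{\sat}/I)_{\delta - \nu} \simeq \Hom_A((C/I)_\nu, A)$, where the $(C/I)$ in place of $C$ on the right-hand side arises because the Koszul differential relating the $E_\infty$ terms kills exactly the image of multiplication by $(F_0,\ldots,F_n)$.

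The main obstacle is the correct definition and control of the region $\Gamma$: unlike the projective or multi-projective cases, cohomology of line bundles on a general projective toric variety need not vanish in a simple cone-shaped region of $\Cl(X_\Sigma)$, and the intermediate local cohomology modules $H^i_\mathfrak{b}(C)$ for $0<i<n$ can obstruct the collapse of the spectral sequence. Thus the technical heart of the proof is a degree-wise vanishing argument — almost certainly using the positivity assumption on $X_\Sigma$ that the paper has already built into its hypotheses — guaranteeing that, for $\delta - \nu \notin \Gamma$, only the top-degree local cohomology contributes, so the double-complex argument cleanly reduces to a single $A$-linear pairing. Once this vanishing is in place, the rest of the argument is the formal manipulation of the two spectral sequences outlined above.
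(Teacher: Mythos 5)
Your proposal follows essentially the same route as the paper: the \v{C}ech--Koszul double complex, comparison of its two spectral sequences, and toric Serre duality applied to the top local cohomology of $C$, with $\Gamma$ taken to be the support of the obstructing local cohomology modules (the paper's $\Gamma_0\cup\Gamma_1$, the supports of $H^p_{\mathfrak b}(K_{p+i-1}(F))$ for $i=0,1$ on two diagonals of the second spectral sequence). Two small corrections to the supporting claims. First, the generic forms $F_0,\dots,F_n$ \emph{do} have common zeros on $X_\Sigma\times_k\Spec(A)$ (their incidence variety is nonempty); the correct statement, which the paper proves via Dedekind--Mertens, is that they form a regular sequence in each localization $C_{x^{\overline{\sigma}}}$, so that only the positive Koszul homology modules $H_p$, $p>0$, are $\mathfrak b$-torsion --- $H_0=C/I$ is not. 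Second, the relevant top local cohomology of the Cox ring is $H^{n+1}_{\mathfrak b}(C)$, not $H^{n}_{\mathfrak b}(C)$, since $H^{p}_{\mathfrak b}(C)_\alpha\simeq H^{p-1}(X_\Sigma,\alpha)$ for $p\ge 2$; with that index your duality $H^{n+1}_{\mathfrak b}(C)_\beta\simeq\Hom_A(C_{-\beta-K_X},A)$ is exactly the one used. Finally, the positivity property plays no role in this theorem; it is needed only later, for the decompositions that define the toric Sylvester forms.
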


In the cases where $(C/I)_{\nu} = C_{\nu}$, the above duality implies that $(I^{\sat}/I)_{\delta - \nu}$ is a free $A$-module (see Corollary \ref{cor:freemodule}) and a natural question is to find explicit bases. To tackle this question, we introduce toric Sylvester forms. We first prove that for suitable $\nu$ and $x^{\mu} \in R_{\nu}$, each generic homogeneous polynomial $F_i$ can be decomposed into $n + 1$ generic homogeneous polynomials $(F_{ij}^{\mu})_{0\leq j\leq n}$, similarly to \eqref{eq:decompFi}. The existence of such decompositions requires a certain property on the smooth cone $\sigma \in \Sigma(n)$; when it holds we will say that $X_\Sigma$ is $\sigma$-\textit{positive} (see Definition \ref{def:definitionpositive} and Theorem \ref{theoremdecompositionrestated}). Then, from these decompositions we define toric Sylvester forms as the determinants $\Sylv_{\mu} := \det(F^{\mu}_{ij}) \in I^{\sat}_{\delta-\nu}$ and \cc{consider their classes in} $(I^{\sat}/I)_{\delta - \nu}$, denoted by  $\sylv_{\mu}$. Finally, we obtain the following explicit duality property which can be seen as the second main contribution of this paper; see Theorem \ref{sylvesterformduality} \cc{and Theorem \ref{sylvesterformsbasis}}.

\begin{theorem*} Let $X_{\Sigma}$ be a projective toric variety and let $\sigma \in \Sigma(n)$ be a smooth cone such that $X_\Sigma$ is $\sigma$-positive. Then, under suitable conditions on $\nu \in \Cl(X_{\Sigma})$, for any pair $x^\mu,x^{\mu'} \in R_{\nu}$ we have
\[ x^{\mu'}\sylv_{\mu} = \begin{cases} \sylv_0 & \textrm{ if } \mu = \mu'  \\ 0 & \textrm{ if } \mu < \mu',\end{cases}\]
where $\sylv_0$ is a generator of $(I^{\sat}/I)_{\delta}$ as an $A$-module and $\mu < \mu'$ denotes a total ordering of the corresponding monomials (see Definition \ref{definitionorder}). Therefore, for suitable $\nu \in \Cl(X_{\Sigma})$ the toric Sylvester forms $\{\sylv_{\mu}\}_{x^\mu \in R_{\nu}}$ yield an $A$-basis of $(I^{\sat}/I)_{\delta - \nu}$.
\end{theorem*}

In the rest of the paper, we provide three applications of toric Sylvester forms in elimination theory. The first application deals with \textit{elimination matrices}. An important question in elimination theory is the study of matrices $\mathbb{M}$ with entries in $A$ such that: 
\begin{itemize}
\item[i)] their rank drops when the coefficients $c_{i,\mu}$'s are specialized in $\mathbf{k}$ and the corresponding polynomial system has solutions in $X_{\Sigma}$,
\item[ii)] their corank coincides with the number of solutions (over $\mathbf{k}$), counting multiplicities, when the coefficients $c_{i,\mu}$'s are specialized in $\mathbf{k}$ and the corresponding polynomial system has finitely many solutions in $X_{\Sigma}$.
\end{itemize}
The first property is related to resultant theory (see e.g.~\cite{cattani1997residuesresultants,gkz1994})  whilst the second is used for solving $0$-dimensional polynomial systems (see e.g.~\cite{bender2021toric,emirismourrain}). In this paper, we introduce a new family of elimination matrices by adding to a classical Macaulay-block matrix in some degree $\alpha \in \Cl(X_{\Sigma})$, a block-matrix built from the toric Sylvester forms of degree $\alpha$ (see Definition \ref{def:H_alpha}). We call these matrices \textit{hybrid elimination matrices} and prove their main properties in Theorem \ref{prop:Helimmat}. Compared with the more classical Macaulay matrices, this new family yields more compact matrices that can still be used for solving 0-dimensional polynomial systems. In addition, we also prove that the construction of hybrid elimination matrices can be extended to polynomial systems defined by more than $n+1$ polynomials (see Theorem \ref{theoremoverdetermined}).

Our second application concerns the \textit{computation of sparse resultants}. 
A classical result in elimination theory is that, under certain assumptions, the sparse resultant can be computed as the determinant of certain graded components of the Koszul complex built from the considered polynomial system (see \cite[Chapter 3 and Chapter 8]{gkz1994}). Generalizing  a construction of Cattani, Dickenstein and Sturmfels in \cite[\S 2]{cattani1997residuesresultants} that uses the so-called toric Jacobian, we modify the usual Koszul complex by incorporating the Sylvester forms in its last differential and prove that the determinant of some suitable graded parts of this new complex is equal to the sparse resultant, up to a nonzero multiplicative constant in $\mathbf{k}$ (see Theorem \ref{resultantformula}). This result yields new formulas for computing the sparse resultant as a determinant of a complex.

As our third application, \cc{we explore the use of Sylvester forms} in \emph{\ccc{the} computation of toric residues}. The toric residue of the generic polynomial system \eqref{eq:Fiintro} was defined by Cox in \cite{coxtoricres}. It is a map that sends any polynomial in $(C/I)_\delta$ to the fraction field $K(A)$ of $A$. The computation of this residue map by means of determinants has been an active research topic with many contributions, including \cite{joanolouformesdinertie,dandreakhetanresidues,cattani1995residues}. In this paper, using toric Sylvester forms we construct matrices whose determinants are used to compute the residue of a product of two forms $PQ$, where $P \in C_{\nu}$, $Q \in C_{\delta - \nu}$ and  $\nu \in \Cl(X_{\Sigma})$. This formula can be seen as an extension of a similar formula proved by Jouanolou in the case $X_\Sigma=\mathbb{P}^n$ \cite[Proposition 3.10.27]{joanolouformesdinertie}.

The paper is organised as follows. In Section  \ref{sec:toricgeom}, we present all the tools of toric geometry that are needed in the rest of the paper. In particular, we prove the existence of decompositions of forms in a projective toric variety $X_{\Sigma}$ which is $\sigma$-positive for a smooth cone $\sigma \in \Sigma(n)$.  
In Section \ref{sec:duality}, we show that the claimed  duality property holds outside a region $\Gamma \subset \Cl(X_{\Sigma})$ which depends on the supports of the local cohomology modules of the corresponding Cox ring. In Section \ref{sec:sylvforms}, we define Sylvester forms and show that they give an $A$-basis of $(I^{\sat}/I)_{\delta - \nu}$ for certain degrees $\nu \in \Cl(X_{\Sigma})$. In Section \ref{sec::hybridmat}, we introduce hybrid elimination matrices when $X_{\Sigma}$ is assumed to be smooth and $\sigma$-positive for a smooth cone $\sigma \in \Sigma(n)$.
In Section \ref{sec:resultants}, we prove that the determinant of certain graded parts of a modified Koszul complex outside a region $\Gamma_{\Res} \subset \Cl(X_{\Sigma})$ is equal to the sparse resultant, up to a nonzero multiplicative constant in $\mathbf{k}$.  Finally, in Section \ref{sec:residues}, we explore the computation of the toric residue of a product of two forms.

\section{Preliminaries on toric geometry}\label{sec:toricgeom}

In this section, we fix our notation and briefly review some material we will use from toric geometry; we refer to the book by Cox, Little and Schenck \cite{coxlittleschneck} for more details. We also prove a decomposition property that we will use to introduce toric Sylvester forms later on.

\paragraph{Projective toric varieties.} 
Let $\mathbf{k}$ be a field and let $M$ be a lattice of rank $n$ ($\simeq \mathbb{Z}^n$). We denote by $N = \Hom_{\mathbb{Z}}(M, \mathbb{Z})$ the dual of $M$, by $\mathbb{T}_N=N\otimes \overline{\mathbf{k}}^{\times}$ the algebraic torus associated to $N$ over an algebraic closure of $\mathbf{k}$. We also set $M_{\mathbb{R}} = M \otimes \mathbb{R}$, which is a vector space over the real numbers. Let $\Delta$ be a very ample lattice polytope with vertices in $M$ and let $\mathcal{A} = \Delta \cap M$ be the lattice points in this polytope. The projective toric variety $X_{\Delta}$ can be defined as the Zariski closure in $\mathbb{P}^{s-1}$ of the image of the map 
\[ \Phi_{\mathcal{A}}: \mathbb{T}_N \xrightarrow{} \mathbb{P}^{s-1} \quad t:=(t_1,\ldots,t_n) \xrightarrow[]{} 
%(\chi^{m_1}(t), \dots, \chi^{m_s}(t))
(t^{m_1}: \dots: t^{m_s}),\]
where $\{m_1,\dots,m_s\}$ are the lattice points in $\mathcal{A}$; see \cite[§2.3]{coxlittleschneck}. 
%This variety is called toric because the group action of $\mathbb{T}_N$ on itself extends to $X_{\Delta}$ with good geometric properties. 
\begin{example}
If $\Delta$ is a product of simplices of the form $\Delta_{n_j} = \{ t \in \mathbb{R}^{n_j} \, :\,  t_i \geq 0, \, \sum_{i = 0}^{n_j} t_i \leq 1\}$ for $j = 1,\dots,s$, then $X_{\Delta} = \mathbb{P}^{n_1} \times \dots \times \mathbb{P}^{n_s}$.
\end{example}

 When $\Delta$ is $n$-dimensional, a more intrinsic definition of the same toric variety can be stated from the normal fan $\Sigma \subset N$ of $\Delta$, so that this variety is also denoted by $X_{\Sigma}$; see \cite[Proposition 3.1.6]{coxlittleschneck} for the equivalence between these definitions. The geometric properties of $X_{\Sigma}$ are deeply connected with the combinatorial properties of the fan $\Sigma$. For instance,  $X_{\Sigma}$ is a smooth variety if and only if $\Sigma$ is smooth, which means that \ccc{for each cone $\sigma \in \Sigma$, its minimal generators are part} of a basis of $N$. 

Moreover, if we are given any $n$-dimensional lattice polytope $\Delta$ (not necessarily very ample) the toric variety to consider is the one associated to $l\Delta$ for $l \gg 0$, which has the same normal fan and is very ample; see \cite[Definition 2.3.14]{coxlittleschneck}. Once we are given a lattice polytope $\Delta$, the variety $X_{\Delta}$ (or identically $X_{\Sigma}$) will be the ambient space throughout this paper. 

We denote by \ccc{$\Sigma(d)$} the set of \ccc{$d$}-dimensional cones of $\Sigma$, which are also called rays when \cc{$d=1$}. We assume that the generators of the rays $u_{\rho} \in N$ for $\rho \in \Sigma(1)$ are primitive and span the vector space $N_{\mathbb{R}}$; by \cite[Corollary 3.3.10]{coxlittleschneck}, this condition is equivalent to the toric variety $X_{\Sigma}$ having no torus factors. Moreover,  if  $\Delta$ is an $n$-dimensional bounded polytope, its normal fan $\Sigma$ is complete 
and the cones of this fan are strongly convex. Under these assumptions, $\Sigma(1)$ contains at least $n + 1$ rays. In addition, 
if we denote the class group of $X_{\Sigma}$ by $\Cl(X_{\Sigma})$, there is a short exact sequence
\begin{equation}
\label{eq:ses}
    0 \xrightarrow[]{} M \xrightarrow[]{\mathbf{F}} \mathbb{Z}^{\Sigma(1)} \xrightarrow[]{\pi} \Cl(X_{\Sigma}) \xrightarrow[]{} 0,
\end{equation} 
where $\mathbf{F}$ is an $(n + r) \times n$ matrix whose rows are the generators of the rays in $\Sigma(1)$ and $\pi$ is chosen accordingly to be a cokernel matrix; see \cite[Theorem 4.1.3]{coxlittleschneck}.

\paragraph{The Cox ring and the $\sigma$-positive property.} 
The homogeneous coordinate ring of a projective toric variety $X_{\Sigma}$, also known as the \textit{Cox ring}, is the ring $R = \mathbf{k}[x_{\rho}, \, \rho \in \Sigma(1)]$ which is $\Cl(X_{\Sigma})$-graded by means of the map $\pi$ defined in \eqref{eq:ses}, i.e.
$$R = \oplus_{\alpha \in \Cl(X_{\Sigma})} R_{\alpha},$$ with $R_{\alpha} = H^0(X_{\Sigma},\mathcal{O}_{\Sigma}(D))$ where $D$ is a torus-invariant Weil divisor such that $[D] = \alpha$ and $\mathcal{O}_{\Sigma}$ is the structure sheaf of $X_{\Sigma}$; see \cite{coxring} for more details.

In what follows, we will use the following notation for the variables of the Cox ring: assuming that there exists a maximal smooth cone $\sigma \in \Sigma(n)$, we will denote by $x_1,\dots,x_n$ the variables associated to the rays $\rho \in \sigma(1)$ and by $z_1,\dots,z_r$ the remaining variables of $R$. Denote by $u_1,\dots,u_n,u_{n+1},\dots,u_{n+r}$ the generators of the rays associated to $x_1,\dots,x_n,z_1,\dots,z_r$, respectively. According to this choice of $\sigma$, one can always write a matrix  of the map $\pi$ in \eqref{eq:ses} under the form
\begin{equation}\label{eq:pimatrix}
\pi = \begin{pmatrix}
  \mathcal{P} & \Id_r 
\end{pmatrix},
\end{equation}
where $\mathcal{P}$ is a block matrix $(\mathcal{P}_{j,k})_{1\leq j\leq r, 1\leq k\leq n}$
whose rows correspond to the relations between $u_{n+j}$ and the basis given by $u_1,\dots,u_n$ for $j = 1,\dots,r$. In other words, each row of $\pi$ corresponds to a relation of the form: 
\begin{equation}
\label{equationu}
    u_{n+j} + \sum_{k = 1}^n\mathcal{P}_{j,k}u_{k} = 0 \quad j = 1,\dots,r.
\end{equation}
In order to introduce Sylvester forms later on, we need the following property which is not standard. 

\begin{definition}
\label{def:definitionpositive} For $\sigma \in \Sigma(n)$, the projective toric variety 
$X_{\Sigma}$ is called \textit{$\sigma$-positive} if $\sigma$ is a maximal smooth cone such that a matrix of the map $\pi$ defined in \eqref{eq:ses} can be written as in \eqref{eq:pimatrix} with the additional condition that 
$\mathcal{P}_{j,k} \geq 0$ for $j = 1,\dots,r$ and $k = 1,\dots,n$. \ccc{This property is equivalent to require that the vectors $-u_{n+j}$ belong to $\sigma$ for all $j = 1,\dots,r$ (see Figure \ref{fig:sigmapositive} for an illustrative example).}  
\end{definition}

\begin{figure}[h]
    \centering
    \begin{tikzpicture}

\fill[color=lightgray] (0,0) rectangle (1,1) ;

    \draw[red,dashed] (0,0) -- (1,1);

\draw[thick,blue] (0,0) -- (1,0);
\draw[thick,blue] (0,0) -- (0,1);
\draw[thick,blue] (0,0) -- (-1,-1);

\end{tikzpicture}
    \caption{\ccc{The complete fan whose rays are generated by the vectors $(1,0),(0,1),(-1,-1)$ (in \textcolor{blue}{blue}) has the $\sigma$-positive property with respect to $\sigma = \langle (1,0),(0,1) \rangle$ (in \textcolor{gray}{gray}) because the only ray that does not belong to $\sigma$ satisfies that $-(-1,-1) \in \sigma$ (in \textcolor{red}{red}).}}
    \label{fig:sigmapositive}
\end{figure}

A first observation is that not all smooth toric varieties are $\sigma$-positive for some $\sigma \in \Sigma(n)$, as shown in the following example.

\begin{example}
Let $\Sigma$ be the complete smooth fan in $N_{\mathbb{R}} = \mathbb{R}^2$ with the following rays:
\begin{equation*}
  \rho_1 = (1,0) \: \rho_2 = (0,1) \: \rho_3 = (-1,1) \: \rho_4 = (-1,0) \: \rho_5 = (-1,-1) \: \rho_6 = (0,-1).  
\end{equation*}
It is straightforward to check that for every $\sigma \in \Sigma(2)$, there is $\rho \notin \sigma(1)$ such that $-u_{\rho} \notin \sigma$.
\end{example}

On the other hand, \ccc{many} of the projective toric varieties that are of interest in applications are $\sigma$-positive for some smooth maximal cone $\sigma$. For instance, this property is preserved under the product of toric varieties. To be more precise, recall that the product of two toric varieties is defined by the product fan; see \cite[Theorem 2.4.7]{coxlittleschneck}. Any cone of this fan is of the form $\sigma_1 \times \sigma_2$, where its elements are considered as pairs $(u,v)$ for $u \in \sigma_1$ and $v \in \sigma_2$. Moreover, $\dim \sigma_1 \times \sigma_2 = \dim \sigma_1 + \dim \sigma_2$.

\begin{lemma}\label{lem:productpositivity}  If $X_{1}$ (resp.~$X_2$) is a toric variety which is $\sigma_1$-positive (resp.~$\sigma_2$-positive) for some maximal cone $\sigma_1$ in a fan $\Sigma_1$, (resp.~$\sigma_2$ in a fan $\Sigma_2$), then the product $X_1 \times X_2$ is 
$(\sigma_1 \times \sigma_2)$-positive. 
\end{lemma}

\begin{proof}
Any ray $\rho$ of the product fan is generated by an element of the form $(u_{\rho_1}, 0)$ or $(0, u_{\rho_2})$, where $\rho_1$ is a ray of $\sigma_1$ and $\rho_2$ is a ray of $\sigma_2$. By assumption, $-u_{\rho_1}$ and $-u_{\rho_2}$ can be written as a positive combination of elements in either $\sigma_1$ or $\sigma_2$; therefore, they belong to $\sigma_1 \times \sigma_2$.
\end{proof}

\begin{example}
\label{example:h1}
The projective space $\mathbb{P}^n$ is $\sigma$-positive as the map $\pi$ can be written as $\pi = ( 1 \cdots 1 )$ for any choice of the maximal cone $\sigma$. Therefore, any product of projective spaces is $\sigma$-positive by Lemma \ref{lem:productpositivity}. Another classical family of smooth toric varieties are Hirzebruch surfaces $\mathcal{H}_{\ccc{b}} \subset \mathbb{R}^2$: for each \ccc{$b \in \mathbb{Z}_{>0}$}, these varieties correspond to the fans \ccc{$\Sigma_b$} with rays
\[\rho_1 = (1,0) \: \rho_2 = (0,1) \: \rho_3 = (-1,-\ccc{b}) \: \rho_4 = (0,-1). \]
Hirzebruch surfaces are smooth and $\sigma$-positive with respect to the smooth maximal cone $\sigma = \langle \rho_1, \rho_2 \rangle$ as $\pi$ can be written as
\[
\pi = \begin{pmatrix}
  1 & \ccc{b} & 1 & 0 \\ 
  0 & 1 & 0 & 1
\end{pmatrix}.
\] 
\end{example}

\paragraph{Generic homogeneous sparse polynomial systems.} 
Let $\Delta_0,\dots,\Delta_n$ be lattice polytopes in $M_{\mathbb{R}}$, let $\Sigma$ be the normal fan of the Minkowski sum $\Delta = \sum_{i = 0}^{n} \Delta_i$ and let $X_{\Sigma}$ be the corresponding toric variety. Assume that $\Delta$ is $n$-dimensional. As $X_{\Sigma}$ is defined by the normal fan of $\Delta$, $X_{\Sigma}$ is complete and $\Delta$ corresponds to an ample divisor \cite[Proposition 6.1.4]{coxlittleschneck}, which implies that $X_{\Sigma}$ is projective.

Suppose that $X_{\Sigma}$ admits a maximal smooth cone $\sigma \in \Sigma(n)$ and label  the rays $(\rho_j)_{j = 1,\dots,n+r}$ and their generators $(u_j)_{j = 1,\dots,n+r}$ according to the order of the variables $x_1,\dots,x_n,z_1,\dots,z_r$ as 
 in \eqref{eq:pimatrix}.  
The lattice polytopes $\Delta_i$ can be identified with elements $a_i = (a_{i,j})_{j = 1,\dots,n+r} \in \mathbb{Z}^{\Sigma(1)} = \mathbb{Z}^{n+r}$ using the following facet presentations
\begin{equation}\label{eq:polytopes}
    \Delta_i = \{m \in M_{\mathbb{R}} \, : \, \langle u_j,m \rangle \geq -a_{i,j}, \, j = 1,\dots,n+r\}, \, i = 0,\ldots,n.
\end{equation}
{These polytopes correspond to divisors $\sum_{j} a_{i,j} D_j$ where $D_j$ is the torus invariant divisor associated with the ray $\rho_j$ for $j = 1,\dots,n+r$. 
\begin{remark}
    We recall that a Cartier divisor $D$ is nef, if and only if, $D$ is generated by global sections; and ample, if and only if, the normal fan of its associated polytope is $\Sigma$; see \cite[Theorem 6.3.12, Theorem 6.1.14, Proposition 7.2.3]{coxlittleschneck}.
    %As $X_\Sigma$ is assumed to be a projective toric variety, it follows that there always exists an ample divisor on $X_\Sigma$; see \cite[Chapter 2, Theorem 7.10]{hartshorne}. 
\end{remark}
In particular, for all $i \in \{0,\dots,n\}$, we can choose $-a_{i,j}$ to be $\min_{m \in \Delta_i} \langle u_j,m \rangle $ (which we can always assume to be an integer as $\Delta_i$ is a lattice polytope), the polytopes $\Delta_0,\dots,\Delta_n$ correspond to nef Cartier divisors (see \cite[Proposition 6.1.1]{coxlittleschneck}). By \cite[Proposition 4.2.8]{coxlittleschneck}, this assumption also implies the following property:
\begin{equation}
\label{equationcartier}
\forall i \in \{ 0,\dots,n\} \text{ and }  \forall \tau \in \Sigma(n), \text{ there is } m_{i,\tau} \in \mathcal{A}_i \text{ such that for } \rho_j \in \tau(1) \quad \langle u_j, m_{i,\tau}\rangle = -a_{i,j}.  
\end{equation} 
Using the short exact sequence \eqref{eq:ses}, we observe that two polytopes whose vectors in $\mathbb{Z}^{n+r}$ map to the same class in $\Cl(X_{\Sigma})$ are translations of each other; see \cite[§4.2, §4.3]{coxlittleschneck}. Moreover, we also note that each vertex of the polytopes $\Delta_0,\ldots,\Delta_n$ corresponds to a maximal cone in a fan that is refined by $\Sigma$; see \cite[Proposition 6.2.5]{coxlittleschneck}. Thus, for each $i = 0,\dots,n$, we choose the presentation of the polytope $\Delta_i$ so that the lattice point in \eqref{equationcartier} associated to the cone $\sigma$ (corresponding to the variables $x_1,\dots,x_n$) is $0 \in M$. In particular, we can see that this choice implies that $a_{i,k} = 0$ for all $k = 1,\dots,n$. Hence, for each $i = 0,\dots,n$, we are choosing a representative of the class of polytopes of $\Delta_i$ that only depends on $a_{i,n+1},\dots,a_{i,n+r}$. We write this representative as $(a_{i,n+j})_{j = 1,\dots,r} \in \mathbb{Z}^{r}$. For instance, if  $X_{\Sigma} = \mathbb{P}^n$ and $\sigma$ is the cone generated by the canonical basis of $\mathbb{Z}^n$, the $\Delta_i$'s  \eqref{eq:polytopes} only depend on a positive integer $a \in \mathbb{Z}_{>0}$ and one recovers the Newton polytopes of the polynomials of degree $a$.

More generally, whenever we refer to a polytope $\Delta_{\nu}$ associated with any other nef Cartier class $\nu \in \Cl(X_{\Sigma})$, we also write:
\begin{equation}\label{eq:polytopesnu}
    \Delta_{\nu} = \{m \in M_{\mathbb{R}} \, : \, \langle u_j,m \rangle \geq -\nu_{j}, \, j = 1,\dots,n+r\},
\end{equation}
and this polytope only depends on $(\nu_{n+j})_{j = 1,\dots,r} \in \mathbb{Z}^{r}$ 
as $\nu_k = 0$ for $k = 1,\dots,n$. In particular, the way we wrote the map $\pi$ in \eqref{eq:pimatrix} implies that every monomial $x^{\mu} = x_1^{\mu_1}\cdots x_n^{\mu_n}z_1^{\mu_{n+1}}\cdots z_r^{\mu_{n+r}}$ of degree $\nu$ is mapped via $\pi$ to $(\nu_{n+j})_{j = 1,\dots,r} \in \mathbb{Z}^r$ and thus satisfies the relations:
\begin{equation}
    \label{eq:murelation}
    \nu_{n+j} = \mu_{n+j} + \sum_{k = 1}^n\mathcal{P}_{j,k}\mu_k \text{ for all } j  = 1,\dots,r.
\end{equation}}
%\begin{remark}
%    If the polytope $\Delta = \sum_{i = 0}^n \Delta_i$ is not $n$-dimensional, we can restrict to the sub-lattice of $M$ spanned by the lattice points $\mathcal{A} = \Delta \cap M$. If any of the polytopes $\Delta_i$ exists outside this sub-lattice, we can perform a translation within the same class of $\Cl(X_{\Sigma})$ to bring it within. Thus, we are not limiting the scope of polytopes $\Delta$ that we are considering in this paper.

%\lb{LB: I do not understand what you want to say with the above remark; please clarify.}
\begin{remark}
\label{remarknegative}
  Writing the polytopes in the presentation \eqref{eq:polytopes} and imposing that $0 \in \Delta_i$ also implies that for any $\nu \in \Cl(X_{\Sigma})$, we have $\nu_{n+j} \geq 0$ for $j = 1,\dots,r$. Otherwise, $0 = \langle u_{n+j}, 0 \rangle \geq -\nu_{n+j} > 0$. In particular, if $\nu_{n+j} < 0$ for some $j \in \{1,\dots,r\}$, then there are no monomials in $R$ of degree $\nu$.
\end{remark}

Let $\alpha_0,\dots,\alpha_n$ be nef Cartier classes in $\Cl(X_{\Sigma})$ associated to $\Delta_0,\dots,\Delta_n$, and $R_{\alpha_0},\dots,R_{\alpha_n}$ be the corresponding graded components in the Cox ring, respectively. These graded components are finite $\mathbf{k}$-vector spaces and have a monomial basis given by monomials $x^{\mu} := x_1^{\mu_1}\cdots x_n^{\mu_n}z_1^{\mu_{n+1}}\cdots z_r^{\mu_{n+r}} \in R$. Let $A = \mathbf{k}[c_{i,\mu} \, : \, x^{\mu} \in R_{\alpha_i}, \, i = 0,\dots,n]$ and $C = A[x_1,\dots,x_n,z_1,\dots,z_r]$. The \textit{generic homogeneous sparse polynomial system of degrees $\alpha_0,\ldots,\alpha_n$} is the system defined by the polynomials 
\begin{equation}\label{eq:Fi}
F_i = \sum_{x^{\mu} \in R_{\alpha_i}}c_{i,\mu}x^\mu \in C, \ \ i=0,\ldots,n.
\end{equation} 
The ring $C$ can be interpreted as the Cox ring of the toric variety $X_{\Sigma}\times_{\mathbf{k}} \Spec(A)$ over generic coefficients and its graded components are given by $C_\alpha=R_\alpha\otimes_{\mathbf{k}} A$ for $\alpha \in \Cl(X_{\Sigma})$.

If the system is dehomogenized by setting $z_1 = \dots = z_r = 1$, we denote by $\Tilde{F}_i$ the polynomial obtained by substituting all $z_j$'s by 1 in $F_i$ for all $i = 0,\dots,n$. \ccc{After applying the change of coordinates defined by the basis of $\sigma$, the Newton polytope of $\Tilde{F}_i$ is $\Delta_i$ (we notice that this change of variables leaves invariant the sparse resultants that we will consider in Section \ref{sec:resultants}).} Conversely, the polynomials
 $F_0,\dots,F_n$ can be defined as the homogenizations of the polynomials  $\Tilde{F}_0, \ldots , \Tilde{F}_n$ with supports in the subsets $\mathcal{A}_i = \Delta_i \cap M$, $i = 0,\dots,n$. More precisely, the homogenization of the polynomial 
\begin{equation}
    \label{nothomogenized}\Tilde{F}_i = \sum_{m \in \mathcal{A}_i}c_{i,m}x^m \in \Tilde{C} = A[x_1,\dots,x_n]
\end{equation}
is the polynomial 
\begin{equation}
    \label{eqhomogenized}{F}_i = \sum_{m \in \mathcal{A}_i}c_{i,m}x^{\mathbf{F}m + a_i} \in {C} = A[x_1,\dots,x_n,z_1,\dots,z_r]
\end{equation}
where $\mathbf{F}$ and the $a_i$'s are defined in \eqref{eq:ses} and \eqref{eq:polytopes}, respectively. We note that  we can choose a monomial basis of $R_{\alpha_i}$ corresponding to $x^{\mu}$ where $\mu = \mathbf{F}m + a_i$ for each $m \in \mathcal{A}_i$. We refer the reader to \cite[Section 2.2]{bender2021toric} for more details about homogenization and dehomogenization of sparse polynomial systems.

\paragraph{A decomposition property.}
In what follows, we prove the existence of certain decompositions of homogeneous polynomials that we will use in Section \ref{sec:sylvforms} for defining toric Sylvester forms. For the sake of clarity, we denote with a lowercase letter $f$ any polynomial in the Cox ring $R$ of a toric variety $X_{\Sigma}$, whose coefficient ring is a field, in contrast with generic polynomials that we denoted above with a capital letter (see also Notation \ref{notSec5}).

\begin{theorem}
\label{theoremdecompositionrestated}
Let $X_{\Sigma}$ be a projective toric variety of dimension $n$ such that $X_{\Sigma}$ is $\sigma$-positive with respect to a smooth cone $\sigma \in \Sigma(n)$. Let $J$ be an ideal of \lb{the Cox ring  $R$ of $X_\Sigma$,} generated by homogeneous polynomials $f_0,\ldots,f_n$ of degrees $\alpha_0,\ldots,\alpha_n$, respectively, whose polytopes $\Delta_0,\dots,\Delta_n$ are written as in \eqref{eq:polytopes} and only depend on $(a_{i,n+j})_{j = 1,\dots,r} \in \mathbb{Z}^{r}$. Let $\nu \in \Cl(X_{\Sigma})$ be a nef Cartier class and let $\Delta_{\nu}$ be the corresponding polytope, written as in \eqref{eq:polytopesnu}, for some $(\nu_{n+j})_{j = 1,\dots,r} \in \mathbb{Z}^{r}$ which \ccc{satisfy} \begin{equation}\label{eq:constraintnu}0 \leq \nu_{n+j}< \min_{\lb{i=0,\ldots,n}} a_{i,n+j} \text{ for all } j = 1,\dots,r \end{equation} 
Then, the two following properties hold:
\begin{itemize}
    \item[\lb{(i)}] $R_{\nu} = (R/J)_{\nu}$.
    \item[\lb{(ii)}] For every $x^\mu \in R_{\nu}$ and $f_i \in R_{\alpha_i}$ and $i = 0,\dots,n$, there exists a decomposition of the form
    \begin{equation}\label{eq:decomposition} f_i = z_1^{\mu_{n+1}+1}\cdots z_r^{\mu_{n+r}+1}f^{\mu}_{i,0} + x_{1}^{\mu_1+1}f^{\mu}_{i,1} + \dots + x_n^{\mu_n + 1}f^{\mu}_{i,n} \end{equation}
    where the \lb{$f_{i,j}^{\mu}$, $i,j = 0,\dots,n$}, are homogeneous polynomials in $R$.
    \end{itemize}
\end{theorem}
\begin{proof}
The graded quotient map $R_{\nu} \xrightarrow[]{} (R/J)_{\nu}$ is surjective. If there is a nonzero polynomial of degree $\nu$ in $J$, there must be a monomial $x^{\mu} \in R_{\nu}$ that is divided by some monomial $x^{\mu_i} \in R_{\alpha_i}$ of degree $\alpha_{i}$ for some $i  \in  \{0,\dots,n\}$, i.e.~the degrees of the generators of $J$. If that is the case, then $x^{\mu} = x^{\Tilde{\mu}}x^{\mu_{i}}$ for some monomial $x^{\Tilde{\mu}}$ of degree $\nu - \alpha_i \in \Cl(X_{\Sigma})$. However, using \eqref{eq:constraintnu}, we see that $\nu_{n+j} - a_{i,n+j} < 0$ and by Remark \ref{remarknegative}, there cannot be any monomials of this degree in $R$. Thus, the kernel of the previous map is zero, proving (i).

\lb{We turn to the proof of (ii).} Recall from \eqref{eqhomogenized} that every monomial of degree $\alpha_i$ (and thus every monomial in $f_i$) can be written as $x^{\mathbf{F}m + a_i}$ for some lattice point $m \in \mathcal{A}_i$. Thus, we fix a monomial $x^{\mathbf{F}m_0 + a_{i}}$ in the support of $f_i$ for some $m_0 \in \mathcal{A}_i$.

Given $x^\mu = x_1^{\mu_1}\dots x_n^{\mu_n}z_1^{\mu_{n+1}}\dots z_r^{\mu_{n+r}} \in R_{\nu}$, \lb{we are going to show} that if $x^{\mathbf{F}m_0 + a_i} \in R_{\alpha_i}$ is not divisible by the monomial $z_1^{\mu_{n+1}+1}\cdots z_r^{\mu_{n+r}+1}$, then it must be divisible by one of the monomials $x_1^{\mu_1 + 1}, \dots, x_n^{\mu_n + 1}$. \lb{Indeed, if this property holds,} every monomial in $f_i$ for $i = 0,\dots,n$ must be divided by either $x_1^{\mu_1 + 1}, \dots, x_n^{\mu_n + 1}$ or $z_1^{\mu_{n+1}+1}\cdots z_r^{\mu_{n+r}+1}$ \lb{and the decompositions \eqref{eq:decomposition} follow}.

Using that $a_{i,k} = 0$ for $k = 1,\dots,n$, the $n + r$ components of $\mathbf{F}m_0 + a_i$ are:
\begin{equation}
\label{eqhomo}
    \begin{cases}
    \langle u_k, m_0 \rangle & k \in \{1,\dots,n\} \\ \langle u_{n+j}, m_0 \rangle + a_{i,n+j} & j \in \{1,\dots,r\}.
    \end{cases}
\end{equation}
Thus, the fact that $x^{\mathbf{F}m_0 + a_i}$ is not divisible by $z_1^{\mu_{n+1}+1}\cdots z_r^{\mu_{n+r}+1}$ implies that
\[ \langle u_{n+j_0}, m_0 \rangle + a_{i,n+j_0} \leq \mu_{n+j_0} \text{ for some } j_0 \in \{1,\dots,r\}.\]
From here, using \eqref{eq:constraintnu}, we get that $\langle u_{n+j_0}, m_0 \rangle + \nu_{n+j_0} < \mu_{n+j_0}$. On the other hand, \lb{the monomial} $x^{\mu} = x_1^{\mu_1}\dots x_n^{\mu_n}z_1^{\mu_{n+1}}\dots z_r^{\mu_{n+r}}$ is of degree $\nu$ and \lb{hence} by \eqref{eq:murelation}, we have $\nu_{n+j_0} = \mu_{n+j_0} + \sum_{k = 1}^n\mathcal{P}_{j_0,k}\mu_{k}$, \lb{which implies} that
$$
    \langle u_{n+j_0}, m_0 \rangle + \sum_{k = 1}^n\mathcal{P}_{j_0,k}\mu_k < 0. $$
Finally, we use the relation \eqref{equationu} between $u_{n + j_0}$ and the generators of $\sigma$ to derive \lb{the inequality}
$$ \sum_{k = 1}^n\mathcal{P}_{j_0,k}(\mu_k - \langle u_k, m_0 \rangle) < 0.$$
As $X_{\Sigma}$ is $\sigma$-positive and all the \lb{$\mathcal{P}_{j_0,k}$'s} are non-negative integers for all $k = 1,\dots,n$ (and they are not all equal to $0$ as $u_{n + j} \neq 0$), \lb{there exists} $k_0 \in \{1,\dots,n\}$ such that $\mu_{k_0} - \langle u_{k_0}, m \rangle < 0$. \lb{As the exponent of $x_{k_0}$ in $x^{\mathbf{F}m_0 + a_i}$ is precisely $\langle u_{k_0},m_0 \rangle$, we deduce that $x_{k_0}^{\mu_{k_0} + 1}$ divides $x^{\mathbf{F}m_0 + a_i}$}.
\end{proof}

\begin{corollary}
\label{corollarydecomposition}
Assume that the projective toric variety 
$X_{\Sigma}$ is $\sigma$-positive for some $\sigma \in \Sigma(n)$. If the polytopes $\Delta_i$ in \eqref{eq:polytopes}  are $n$-dimensional for all $i = 0,\dots,n$, then Theorem \ref{theoremdecompositionrestated} holds for $(\nu_{n+j})_{j = 1,\dots,r} = 0 \in \mathbb{Z}^r$.
\end{corollary}

\begin{proof}
If there are $i_0 \in \{0,\dots,n\}$ and $j_0 \in \{1,\dots,r\}$ such that $a_{i_0,n+j_0} = 0$, then for every $m \in \mathcal{A}_{i_0} = \Delta_{i_0} \cap M$, we have the inequality $ \langle u_{n+j_0}, m \rangle \geq 0$. Using the relation \eqref{equationu}, we get $$\sum_{k = 1}^n\mathcal{P}_{j_0,k} \langle u_{k}, m \rangle \leq 0 \quad \forall m \in \mathcal{A}_{i_0}.$$
As $X_{\Sigma}$ is $\sigma$-positive and the $\mathcal{P}_{j_0,k}$ are non-negative integers for $k = 1,\dots,n$ (not all equal to zero as $u_{n + j_0} \neq 0$), there must be some $k_0 \in \{1,\dots,n\}$ such that $\langle u_{k_0}, m \rangle \leq 0$ for all $m \in \mathcal{A}_{i_0}$. On the other hand, we also have the inequality $\langle u_{k_0}, m \rangle \geq 0$ for all $m \in \mathcal{A}_{i_0}$ due to the presentation in \eqref{eq:polytopes} and using that $a_{i_0,k} = 0$ for all $k = 1,\dots,n$. Thus, the lattice points in $\Delta_{i_0}$ must satisfy $\langle u_{k_0}, m \rangle = 0$ and thus $\Delta_{i_0}$ cannot be $n$-dimensional.

Therefore, if the $\Delta_i$ are $n$-dimensional for all $i = 0,\dots,n$, we have $0 < \min_{\lb{i=0,\ldots,n}} a_{i,n+{j}}$ for $j = 1,\dots,r$, which proves that $(\nu_{n+j})_{j = 1,\dots,r} = 0 \in \mathbb{Z}^r$ satisfies the hypotheses of Theorem \ref{theoremdecompositionrestated}.
\end{proof}

Finally, we note that that if $X_\Sigma$ is assumed to be $\sigma$-positive, then Theorem \ref{theoremdecompositionrestated} can be easily extended to the setting of generic homogeneous sparse polynomials in \eqref{eq:Fi} and yield a decomposition of the $F_i$ for $i=0,\ldots,n$, over $X_{\Sigma} \times_{\mathbf{k}} \Spec(A)$.

\begin{remark}
    If $X_{\Sigma}$ does not have the $\sigma$-positive property, but one can  find another way to decompose the polynomials $F_i$ for $i = 0,\dots,n$ as in \eqref{eq:decomposition}, then the \lb{results presented in the next sections hold similarly}. One \lb{such} example is the construction of the form $\Delta_{\sigma}$ with a nonzero residue, as detailed in \cite[Theorem 0.2]{cattani1995residues}, which relies on the polynomials $F_i$ corresponding to $\mathbb{Q}$-ample divisors.
%using toric residues, namely that $\Residue_F(\sylv_0) = 1$, generically. 
%In that paper, the hypothesis that the $\alpha_i$ 's are $\mathbb{Q}$-ample, for $i = 0,\dots,n$, is used in order to derive the decomposition \eqref{eq:decompsylv}. In our context, we already derived such a  decomposition in Theorem \ref{theoremdecompositionrestated} so the same property holds in this case.
\end{remark}

\paragraph{Torsion and local cohomology.}
From the fan $\Sigma$ of a \lb{toric variety $X_\Sigma$}, the irrelevant ideal $\mathfrak{b}$ of its homogeneous coordinate ring $R=\mathbf{k}[x_{\rho}, \, \rho \in \Sigma(1)]$ is defined as
\begin{equation}
\label{irrelevant}
    \mathfrak{b} = ( \tilde{x}^{\tau} \textrm{ such that } \tau \in \Sigma(n) ), \textrm{ where } \tilde{x}^{\tau} = \prod_{\rho \notin \tau(1)}x_{\rho}.
\end{equation}
The $\mathfrak{b}$-torsion of a graded $R$-module $S$ is classically defined as   
\[ \Gamma_{\mathfrak{b}}(S) = \{a \in S \, : \, \exists k \in \mathbb{Z}_{>0} \: \mathfrak{b}^k·a = 0\}\]
and the local cohomology modules $H^i_{\mathfrak{b}}(S)$ are then the derived functors of $S \xrightarrow[]{} \Gamma_{\mathfrak{b}}(S)$. When the module $S$ is a quotient ring $B = C/I$ for $I = \langle F_0,\dots,F_n \rangle$ the ideal generated by the polynomials defined in \eqref{eq:Fi}, the $0$-th local cohomology is $H_{\mathfrak{b}}^0(B) = I^{\sat}/I$ where $I^{\sat}$ denotes the saturation of $I$ with respect to the irrelevant ideal, i.e.~$ I^{\sat} := (I:\mathfrak{b}^{\infty}) = \{ p \in C \, : \,  \exists k \in \mathbb{Z}_{>0} \quad \mathfrak{b}^k p \subset I \}$. 

Local cohomology modules are strongly related to sheaf cohomology modules. More precisely, let $S$ be a finitely generated $\Cl(X_{\Sigma})$-graded $R$-module with associated coherent sheaf $\mathcal{S}$ in $X_{\Sigma}$ and $\alpha \in \Cl(X_{\Sigma})$. If $p \geq 2$, then
\begin{equation}\label{localsheaf} H_{\mathfrak{b}}^p(S)_{\alpha} \simeq H^{p-1}(X_{\Sigma},\mathcal{S}(\alpha)),\end{equation}
where $H^p_{\mathfrak{b}}(S)_{\alpha}$ is the graded piece of $H^p_{\mathfrak{b}}(S)$ of degree $\alpha$ and $\mathcal{S}(\alpha)$ is the sheaf defined by $\mathcal{S} \otimes \mathcal{O}_{\Sigma}(D)$, for a divisor $D$ with $[D] = \alpha$ and $\mathcal{O}_{\Sigma}$ the structure sheaf of $X_{\Sigma}$. Furthermore, the following exact sequence holds (see \cite[Theorem 9.5.7]{coxlittleschneck} for proofs):
\[ 0 \xrightarrow{} H_{\mathfrak{b}}^0(S)_{\alpha} \xrightarrow[]{} S_{\alpha} \xrightarrow{} H^{0}(X_{\Sigma},\mathcal{S}(\alpha)) \xrightarrow[]{} H_{\mathfrak{b}}^1(S)_{\alpha} \xrightarrow[]{} 0.\]
If $S = R$, then $R_{\alpha} = H^0(X_{\Sigma}, \mathcal{O}_{\Sigma}(\alpha))$ and therefore 
\begin{equation}
    \label{remarkglobalsections}
    H^0_{\mathfrak{b}}(R) = H^1_{\mathfrak{b}}(R) = 0,
\end{equation}
which implies that $H^{i}_{\mathfrak{b}}(C) = 0$ for $i = 0,1$. 

\begin{notation} For the sake of simplicity in the notation, 
for any Cartier divisor $D$ and any integer $p\geq 0$, we will write $H^p(X_{\Sigma},\alpha)$ in place of $H^p(X_{\Sigma},\mathcal{O}_{\Sigma}(D))$, where 
$\alpha = [D] \in \Cl(X_{\Sigma})$. 
\end{notation}

\lb{The following theorems, that are originally due to Demazure and Batyrev-Borisov, will be our main tools to analyze the vanishing of sheaf cohomology modules over toric varieties (see \cite[Theorem 9.2.3]{coxlittleschneck} and \cite[Theorem 9.2.7]{coxlittleschneck} for proofs).
\begin{theorem}[Demazure]\label{demazure}
Let $X_{\Sigma}$ be a toric variety such that $\Sigma$ is complete and $D$ be a nef Cartier divisor, then  
$H^p(X_{\Sigma}, \alpha) \simeq 0$ for all $p > 0$ and $\alpha = [D]$.
\end{theorem}

\begin{theorem}[Batyrev-Borisov]\label{batyrevborisov}
    Let $X_{\Sigma}$ be a  complete toric variety and $D$ be a nef Cartier divisor, then 
$$H^p(X_{\Sigma},-\alpha)  \simeq 
\begin{cases}
0 & \textrm{ if } p \neq \dim \Delta_{\alpha} \\ 
\oplus_{m \in \Relint(\Delta_{\alpha}) \cap M}K\chi^{-m} & \textrm{ if } p = \dim \Delta_{\alpha} 
\end{cases}$$
where $\alpha = [D] \in \Cl(X_{\Sigma})$ and $\Relint(\Delta_{\alpha})$ denotes the relative interior of the polytope $\Delta_{\alpha}$ associated with $\alpha$.
\end{theorem}

\begin{remark}
We notice that the two above theorems are proved in more generality in \cite{coxlittleschneck}, we stated them with assumptions that are sufficient in our context.
\end{remark} 

 Another important result we will use is the toric version of Serre duality (see \cite[Theorem 9.2.10]{coxlittleschneck} for a proof): for any Cartier divisor $D$ and any integer $p \geq 0$, 
\begin{equation}\label{serreduality} 
    H^p(X_{\Sigma}, \alpha) \cong H^{n-p}(X_{\Sigma}, -K_X-\alpha)^\vee,
\end{equation} 
where $K_{X}$ is the anticanonical class in $\Cl(X_{\Sigma})$ and $\alpha = [D] \in \Cl(X_{\Sigma})$.
}

\paragraph{Hilbert functions and the Grothendieck-Serre formula.} Let $X_{\Sigma}$ be a projective toric variety and let $R$ be its Cox ring. The Hilbert function of a finitely generated graded $R$-module $S$ is defined \lb{by}
\begin{eqnarray*}
\HF(S,-): \Cl(X_{\Sigma}) & \rightarrow & \mathbb{Z}_{\geq 0} \\ 
\alpha & \mapsto & \HF(S,\alpha) := \dim_{\mathbf{k}}(S_{\alpha}).	
\end{eqnarray*}
Assuming that $X_\Sigma$ is a \textit{smooth} toric variety, then for $\alpha \gg 0$ (component-wise), this function becomes a (multivariate) polynomial called the Hilbert polynomial and is denoted by $\HP(S,\alpha)$; see \cite[Lemma 2.8]{maclagan2003uniform}.
\begin{remark}
\label{remarkzerodimensional}
If $S = R/J$ with $J$  \lb{a homogeneous} ideal of $R$ defining a $0$-dimensional subscheme in $X_{\Sigma}$, then the Hilbert polynomial of $S$ is a constant which is equal to the number of points (over an algebraic closure of $\mathbf{k}$) in this subscheme, counted with multiplicity.
\end{remark} 
An important relation between the Hilbert function, the Hilbert polynomial and local cohomology modules is given by the Grothendieck-Serre formula (see \cite[Proposition 2.14]{maclagan2003uniform} for a proof): for any $\alpha \in \Cl(X_{\Sigma})$,
\begin{equation}
\label{grserre}
    \HF(S,\alpha) = \HP(S,\alpha) + \sum_{i = 0}^{n}(-1)^i\dim_{\mathbf{k}} H_\mathfrak{b}^i(S)_{\alpha}.
\end{equation} 

We notice that the smoothness assumption we will require on the toric variety $X_{\Sigma}$ in Sections \ref{sec::hybridmat}, \ref{sec:resultants} and \ref{sec:residues} is precisely due to the use of this formula.

\section{A duality theorem}\label{sec:duality}

Let $X_\Sigma$ be a projective toric variety of dimension $n$ \lb{which admits} a maximal smooth cone $\sigma \in \Sigma(n)$. 
In this section, we consider the ideal generated by $n+1$ generic homogeneous sparse polynomials (see Section \ref{sec:toricgeom}) and analyze some graded components of its saturation via a duality property. For that purpose, we take again the notation \eqref{eq:Fi}: $F_0,\dots,F_n$ are the generic homogeneous polynomials of degree $\alpha_0,\dots,\alpha_n$, respectively; they are of the form
\begin{equation}
F_i = \sum_{x^{\mu} \in R_{\alpha_i}}c_{i,\mu}x^{\mu} \in C = A[x_1,\dots,x_n,z_1,\dots,z_r].
\end{equation}

As a preliminary result, we first show that $F_0,\dots,F_n$ form a regular sequence outside $V(\mathfrak{b})\subset \Spec(C)$.

\begin{lemma}
\label{lemmaattheend}
For every maximal cone $\tau \in \Sigma(n)$ and for every $i = 0,\dots,n$, there is a lattice point $m_{i,\tau} \in \mathcal{A}_i$ and $L \in \mathbb{Z}_{> 0}$ such that $x^{\mathbf{F}m_{i,\tau} + a_i}$ divides $(\Tilde{x}^{\tau})^{L}$ where $\Tilde{x}^{\tau}$ is 
defined in \eqref{irrelevant}. 
%Therefore, $x^{\mathbf{F}m_{i,\tau} + a_i}$ is an invertible element in the localization ring $C_{\Tilde{x}^{\tau}}$.
\end{lemma}
\begin{proof}
    The exponents of of $x^{\mathbf{F}m + a_i}$ are:
    \begin{equation}
    \begin{cases}
    \langle u_k, m \rangle & k \in \{1,\dots,n\} \\ \langle u_{n+j}, m \rangle + a_{i,n+j} & j \in \{1,\dots,r\}.
    \end{cases}
\end{equation}
Thus, using \eqref{equationcartier}, we can find $m_{i,\tau} \in \mathcal{A}_i$ such that for $\rho_j \in \tau(1)$, we have $\langle u_j, m_{i,\tau} \rangle + a_{i,j} = 0$. Moreover, we can choose $L$ that bounds above $\langle u_j, m_{i,\tau} \rangle + a_{i,j}$ for $\rho_j \notin \tau(1)$. Therefore, $x^{\mathbf{F}m_{i,\tau} + a_i}$ divides $(\Tilde{x}^{\tau})^{L}$.
\end{proof}

\begin{lemma}
\label{lemmaregularsequence} The homogeneous generic polynomials 
$F_0,\dots,F_n$ define a regular sequence in the localization ring \lb{$C_{\Tilde{x}^{\tau}}$ for any $\tau \in \Sigma(n)$, where $\Tilde{x}^{\tau}$ is defined in \eqref{irrelevant}}. 
%\lb{(the notation is not introduced?!; same remark in the proof for $B_\sigma$)}

\end{lemma}

\begin{proof}
We claim that $F_0$ is a nonzero divisor in $C$. This follows \lb{from} Dedekind-Mertens Lemma \cite[Corollary 2.8]{busejouanolou}, which says that a polynomial $F$ is a nonzero divisor in $A[x_1,\dots,x_n]$ if its content ideal is a nonzero divisor in $A$. The content ideal is generated by the coefficients $c_{0,\mu}$ for $x^\mu \in R_{\alpha_0}$ and they are all nonzero divisors. Therefore, $F_0$ is a nonzero divisor also in $C_{\Tilde{x}^{\tau}}$ for all $\tau \in \Sigma(n)$. 

By Lemma \ref{lemmaattheend}, we can always find $m_{i,\tau} \in \mathcal{A}_i$ such that $x^{\mathbf{F}m_{i,\tau} + a_i}$ is invertible in the localization ring $C_{\Tilde{x}^{\tau}}$ and let $c_{i,\tau}$ be the coefficient in $A$ associated to this monomial. Then, similarly to \cite[Lemma 3.2]{buse2021multigraded}, for any $t\in \{0,\ldots,n-1\}$ there is an isomorphism of $(A_{\tau}^{t}[x_1,\dots,x_n,z_1,\dots,z_r])$-algebras
\[ \big(A[x_1,\dots,x_n,z_1,\dots,z_r]/\langle F_0,\dots,F_t \rangle\big)_{\Tilde{x}^{\tau}} \xrightarrow{\sim} (A_{\tau}^{t}[x_1,\dots,x_n,z_1,\dots,z_r])_{\Tilde{x}^{\tau}}  \]
where $A_{\tau}^t = \mathbf{k}[c_{i,\mu} \quad c_{i,\mu} \neq c_{i,\tau} \quad 0 \leq i \leq t]$ i.e., $A = A_{\tau}^t[c_{i,\tau} \quad 0 \leq i \leq t]$. This map sends $c_{i,\tau}$ to $\frac{-F_i + c_{i,\tau}x^{\mathbf{F}m_{i,\tau} + a_i}}{x^{\mathbf{F}m_{i,\tau} + a_i}}$ for $i = 0,\dots,t$, and leaves  the rest of coefficients and variables invariant. Applying again the Dedekind-Mertens Lemma as above, we deduce that the polynomial $F_{t+1}$ is a nonzero divisor in $(A_{\tau}^{t}[x_1,\dots,x_n,z_1,\dots,z_r])_{\Tilde{x}^{\tau}}$, and therefore in the localized quotient ring  $ \big(A[x_1,\dots,x_n,z_1,\dots,z_r]/\langle F_0,\dots,F_t \rangle\big)_{\Tilde{x}^{\tau}} $. 
\end{proof}
Next, we consider the two canonical spectral sequences associated with the \v{C}ech-Koszul double complex $\mathcal{C}_{\mathfrak{b}}^{\bullet}(K_{\bullet}(F))$, where $K_\bullet(F)$ denotes the Koszul complex of the sequence of homogeneous polynomials $F_0,\ldots,F_n$ in $C$. The terms of the Koszul complex are graded free $C$-modules and we denote their homology modules by $H_p$ for simplicity in the notation. If we start taking homologies horizontally, the second page is:
\[\begin{tikzpicture}
\matrix (m) [matrix of math nodes,
             nodes in empty cells,
             nodes={minimum width=5ex,
                    minimum height=5ex,
                    outer sep=-5pt},
             column sep=1ex, row sep=1ex,
             text centered,anchor=center]{
   H_{\mathfrak{b}}^{0}(H_{n+1})   &  H_{\mathfrak{b}}^{0}(H_{n})  &  H_{\mathfrak{b}}^{0}(H_{n-1}) & \cdots &  H^0_{\mathfrak{b}}(H_0)=I^{\sat}/I \\
   0   &  0  &  0 & \cdots &  H_{\mathfrak{b}}^{1}(H_{0}) \\ \vdots   &  \vdots         & \vdots          &  & \vdots \\                   
   0   &  0  & 0 & \cdots &  H_{\mathfrak{b}}^{n}(H_{0}) \\
   0   &  0 &  0 & \cdots &  H_{\mathfrak{b}}^{n+1}(H_{0}) \\};
\end{tikzpicture}.
\]
The vanishing of the local cohomology modules $H_{\mathfrak{b}}^i(H_j)$ for $i>0$ and $j>0$ follows from \lb{Lemma \ref{lemmaregularsequence} which shows that the $F_i$'s form a regular sequence outside $V(\mathfrak{b})$}. In addition, we deduce that $H_p$ are geometrically supported on $V(\mathfrak{b})$ for all $p>0$ by a classical property of Koszul complexes, and hence that $H_{\mathfrak{b}}^{0}(H_{p})=H_{p}$ for all $p>0$. 

On the other hand, if we start taking homologies vertically, we obtain the following first page:
\[\begin{tikzpicture}\label{eq:ss2}
\matrix (m) [matrix of math nodes,
             nodes in empty cells,
             nodes={minimum width=5ex,
                    minimum height=5ex,
                    outer sep=-5pt},
             column sep=1ex, row sep=1ex,
             text centered,anchor=center]{
   0  &  &  0 &  &  0 & \cdots &  0 \\
 0  &  &  0 &  &  0 & \cdots &  0  \\
   \vdots &   &  \vdots     &     & \vdots          &  & \vdots \\
%   H_{\mathfrak{b}}^{n-1}(C(-\sum \alpha_i)) & \rightarrow &  H_{\mathfrak{b}}^{n-1}(\oplus_{k}C(- \sum_{j \neq k}\alpha_j))  & \rightarrow &  H_{\mathfrak{b}}^{n-1}(\oplus_{k,k'}C(- \sum_{j \neq k,k'}\alpha_j)) & \cdots &  H_{\mathfrak{b}}^{n-1}(C) \\
  H_{\mathfrak{b}}^{n}(C(-\sum_j \alpha_j)) & \rightarrow & H_{\mathfrak{b}}^{n}(\oplus_{k}C(- \sum_{j \neq k}\alpha_j))  & \rightarrow &  H_{\mathfrak{b}}^{n}(\oplus_{k,k'}C(- \sum_{j \neq k,k'}\alpha_j)) & \cdots &  H_{\mathfrak{b}}^{n}(C) \\
   H_{\mathfrak{b}}^{n+1}(C(-\sum_j \alpha_j)) & \rightarrow & H_{\mathfrak{b}}^{n+1}(\oplus_{k}C(- \sum_{j \neq k}\alpha_j))  & \rightarrow & H_{\mathfrak{b}}^{n+1}(\oplus_{k,k'}C(- \sum_{j \neq k,k'}\alpha_j)) & \cdots &  H_{\mathfrak{b}}^{n+1}(C) \\};
\end{tikzpicture}
\]
using that $K_j(F) = \bigoplus^{J \subset \{ 0,\dots,r \}}_{|J| = j}C(-\sum_{k \in J} \alpha_k)$. We note that the vanishing of the two first rows follows from \eqref{remarkglobalsections} and  the vanishing of $H_{\mathfrak{b}}^{p}(C)$ for all $p > n + 1$ is a consequence of Grothendieck's vanishing theorem \cite[Theorem 3.6.5]{grothendieckvanishing}. 

\begin{notation}
\label{notation}
The support $\Supp S$ of a graded module $S$ is the subset of $\nu \in \Cl(X_{\Sigma})$ such that $S_{\nu} \neq 0$. We denote by $\Gamma_1 $ the support of the modules on the main diagonal, except on the last row, and by $\Gamma_0 $ the support of the modules in the diagonal under $\Gamma_1 $, except on the last row again, i.e.
\begin{equation}
\label{supportscohomology}
    \Gamma_i = \Supp(\oplus_{p = 0}^{n}H_{\mathfrak{b}}^{p}(K_{p+i-1}(F))) \quad i = 0,1.
\end{equation}
In addition, we define $\Gamma_{\Res}$ to be the support of all the cohomology modules that are appearing above the diagonal in the first page of the second spectral sequence, i.e. $\Gamma_{\Res} = \Supp( {\oplus_{i < j}}H_{\mathfrak{b}}^{i}(K_{j}(F))$. Moreover, from now on, we denote by $\delta$ the divisor class $\alpha_0 + \dots + \alpha_n - K_X$ where $K_X$ denotes the anticanonical divisor of $X_{\Sigma}$. 
\end{notation}

\begin{remark}\label{rm:acyclicity}In the above analysis of the two spectral sequences associated to $F$, we proved that $K_\bullet(F)_\alpha$ is an acyclic complex of $A$-modules for all $\alpha \notin \Gamma_{\Res}$.
\end{remark}

The comparison of the two above spectral sequences leads to the following duality theorem. 

\begin{theorem}\label{dualitytheorem}
Let $X_{\Sigma}$ be a projective toric variety 
which admits a maximal smooth cone $\sigma \in \Sigma(n)$ 
and let $\nu \in \Cl(X_{\Sigma})$ be a nef Cartier divisor. If $\delta - \nu \notin  \Gamma_0  \cup \Gamma_1 $, then
\[ (I^{\sat}/I)_{\delta - \nu} \simeq \Hom_{A}((C/I)_{\nu},A). \]
\end{theorem}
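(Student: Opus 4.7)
The plan is to deduce the duality by comparing the two spectral sequences of the \v{C}ech-Koszul bicomplex $\mathcal{C}_{\mathfrak{b}}^{\bullet}(K_{\bullet}(F))$ displayed before the statement: both converge to the same total hypercohomology, so if on a common antidiagonal each spectral sequence has a unique surviving $E_\infty$-term in internal degree $\delta-\nu$, those two terms are forced to be isomorphic. Bigrading by $(p,q)$ with $p$ the Koszul homological index and $q$ the \v{C}ech cohomological index, $(I^{\sat}/I)_{\delta-\nu}$ sits at $(0,0)$ in the first spectral sequence, and I aim to pair it with the opposite corner $(n+1,n+1)$ of the second one, on the common antidiagonal where $p = q$.

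For the first spectral sequence the analysis is immediate from the displayed $E_2$-page. Along the antidiagonal passing through $(0,0)$ every other entry is of the form $H^p_{\mathfrak{b}}(H_p)$ with $p\geq 1$ and vanishes, because each $H_p$ for $p>0$ is $\mathfrak{b}$-torsion (by Lemma \ref{lemmaregularsequence}) and hence has no higher local cohomology. All differentials on later pages impinging on $(0,0)$ originate from entries of the analogous form $H^{r-1}_{\mathfrak{b}}(H_r)$ (also vanishing), so $E_\infty^{0,0}_{\delta-\nu} = E_2^{0,0}_{\delta-\nu} = (I^{\sat}/I)_{\delta-\nu}$ is the unique contribution to the total cohomology on this antidiagonal.

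For the second spectral sequence I use the full hypothesis $\delta-\nu \notin \Gamma_0\cup\Gamma_1$. The exclusion from $\Gamma_1$ makes each $E_1^{p,p}_{\delta-\nu}=H^p_{\mathfrak{b}}(K_p(F))_{\delta-\nu}$ vanish for $p=0,\ldots,n$, so these positions contribute nothing to $E_\infty$ on the antidiagonal. At the remaining corner $(n+1,n+1)$ the $E_2$-entry is
\[ \ker\!\bigl( H^{n+1}_{\mathfrak{b}}(K_{n+1}(F))_{\delta-\nu} \xrightarrow{d_1} H^{n+1}_{\mathfrak{b}}(K_n(F))_{\delta-\nu}\bigr),\]
with no incoming $d_1$ since $K_{n+2}(F)=0$. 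The exclusion from $\Gamma_0$ is tailored so that all the higher differentials $d_r$, $r\geq 2$, touching $(n+1,n+1)$ vanish in internal degree $\delta-\nu$: the entries on nearby antidiagonals from which such differentials could originate or terminate are precisely the $H^p_{\mathfrak{b}}(K_{p-1}(F))$ for $2\leq p\leq n$ (the entries in $\Gamma_0$), together with entries that vanish unconditionally (the corner $H^1_{\mathfrak{b}}(C)$ by \eqref{remarkglobalsections}, and the rows $q>n+1$ by Grothendieck's vanishing theorem). Hence $E_\infty^{n+1,n+1}_{\delta-\nu}=E_2^{n+1,n+1}_{\delta-\nu}$ equals the displayed kernel.

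It only remains to identify this kernel with $\Hom_A((C/I)_{\nu},A)$. For any $\alpha\in\Cl(X_\Sigma)$, \eqref{localsheaf} together with toric Serre duality give $H^{n+1}_{\mathfrak{b}}(C)_\alpha \cong H^n(X_{\Sigma},\alpha)\otimes_k A \cong \Hom_A(C_{-K_X-\alpha},A)$. Specialising to $\delta-\nu-\sum_i\alpha_i=-K_X-\nu$ and $\delta-\nu-\sum_{j\neq k}\alpha_j=-K_X-\nu+\alpha_k$ yields
\[ H^{n+1}_{\mathfrak{b}}(K_{n+1}(F))_{\delta-\nu}\cong\Hom_A(C_\nu,A),\qquad H^{n+1}_{\mathfrak{b}}(K_n(F))_{\delta-\nu}\cong \bigoplus_{k=0}^n\Hom_A(C_{\nu-\alpha_k},A),\]
under which the Koszul $d_1$ becomes the $A$-dual of the map $\oplus_k C_{\nu-\alpha_k}\to C_\nu$, $(g_k)\mapsto\sum_k F_kg_k$, whose cokernel is $(C/I)_\nu$. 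Applying the left-exact functor $\Hom_A(-,A)$ to $\oplus_k C_{\nu-\alpha_k}\to C_\nu \to (C/I)_\nu\to 0$ identifies the kernel as $\Hom_A((C/I)_\nu,A)$. The main obstacle is the spectral-sequence bookkeeping: verifying that $\Gamma_0$ and $\Gamma_1$ capture exactly the bidegrees whose vanishing forces $E_\infty^{n+1,n+1}=E_2^{n+1,n+1}$ on the antidiagonal $\delta-\nu$; once this is pinned down, the Serre-duality identification of the kernel is formal.
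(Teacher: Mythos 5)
Your proposal is correct and follows essentially the same route as the paper: comparing the two spectral sequences of the \v{C}ech--Koszul bicomplex, using the exclusion from $\Gamma_0\cup\Gamma_1$ to isolate the corner term $E_2^{n+1,n+1}=\ker\bigl(H^{n+1}_{\mathfrak{b}}(K_{n+1}(F))\to H^{n+1}_{\mathfrak{b}}(K_n(F))\bigr)_{\delta-\nu}$, and identifying it via toric Serre duality as $\Hom_A((C/I)_\nu,A)$. You simply spell out the degeneration bookkeeping (which entries $\Gamma_0$ and $\Gamma_1$ kill, and why no higher differentials touch the two corners) that the paper leaves implicit.
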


\begin{proof} From the comparison of the two spectral sequences associated to the double complex $\mathcal{C}_{\mathfrak{b}}^{\bullet}(K_{\bullet}(F))$, for all $\nu \in \Cl(X_{\Sigma})$ such that $\delta - \nu \notin \Gamma_0 \cup \Gamma_1$, we get 
an isomorphism 
$$(I^{\sat}/I)_{\delta - \nu} \simeq \Ker\left(H_{\mathfrak{b}}^{n+1}(C(-\sum_{j}\alpha_j)) \xrightarrow[]{} H_{\mathfrak{b}}^{n+1}(\oplus_{k}C(-\sum_{j \neq k}\alpha_j))\right)_{\delta - \nu}.$$  
Moreover, using toric Serre duality \eqref{serreduality} and the relation between sheaf and local cohomology modules \eqref{localsheaf}, we obtain
\begin{equation*}
   H_{\mathfrak{b}}^{n+1}(C(-\sum_{j}\alpha_j))_{\delta - \nu} \simeq H^n(X_{\Sigma},-\nu -K_X) \simeq H^0(X_{\Sigma}, \nu)^{\vee} \simeq \Hom_{A}(C_{\nu}, A). 
\end{equation*}
By the same argument, we also have $H_{\mathfrak{b}}^{n+1}(\oplus_{k}C(-\sum_{j \neq k}\alpha_i))_{\delta - \nu} \simeq \Hom_{A}(I_{\nu}, A)$. %\begin{equation}\label{support2}H_{\mathfrak{b}}^{n+1}(\oplus_{k}C(-\sum_{j \neq k}\alpha_i))_{\delta - \nu} \simeq \Hom_{A}(I_{\nu}, A). \end{equation},
Using the first isomorphism, we get the duality property.
\end{proof}

\begin{corollary}\label{cor:freemodule}
Let $X_{\Sigma}$ be a projective toric variety which admits a maximal smooth cone $\sigma \in \Sigma(n)$. Let $\Delta_0,\dots,\Delta_n$ be lattice polytopes as in \eqref{eq:polytopes} corresponding to the polynomials $F_0,\dots,F_n$. Let $\nu \in \Cl(X_{\Sigma})$ be a nef Cartier class and $\Delta_{\nu}$ be the corresponding polytope, written as in \eqref{eq:polytopesnu}, satisfying $0 \leq \nu_{n+j} < \min_{i = 0,\dots,n} a_{i,n+j}$ for $j = 1,\dots,r$. Assume also that $\delta - \nu \notin \Gamma_0 \cup \Gamma_1$. Then,
\begin{equation*} (I^{\sat}/I)_{\delta - \nu} \simeq \Hom_{A}(C_{\nu},A).
\end{equation*}
In particular, $(I^{\sat}/I)_{\delta-\nu}$ is a free $A$-module whose rank is equal to the rank of $C_\nu$, \lbb{equivalently $\HF(R,\nu)$}. 
\end{corollary} 
\begin{proof}
Using Theorem \ref{theoremdecompositionrestated} $i)$ (which does not require the $\sigma$-positive property),
we can derive that $(C/I)_{\nu}=C_{\nu}$.
\end{proof}
%\cc{We can get the same conclusion by looking at the equation \eqref{support2} and noting that if $H_{\mathfrak{b}}^{n+1}(\oplus_{k}C(-\sum_{j \neq k}\alpha_i))_{\delta - \nu} = 0$, then the graded component $I_{\nu}$ must be zero, implying that $(C/I)_{\nu}=C_{\nu}$.}
\begin{remark}
 We notice that the case $\nu=0$, which corresponds to the isomorphism $(I^{\sat}/I)_{\delta}\simeq A$, appears in \cite{cattani1997residuesresultants} in the case the polytopes $\Delta_0,\ldots,\Delta_n$ are scaled copies of the same ample polytope.
\end{remark}

To close this section, we prove that if we consider a proper subset of the polynomials that generate $I$, then the corresponding ideal must be saturated at $\delta$. We will need this property in the next section.
\begin{lemma}
\label{lemmastrictusbset}
    Assume that the polytopes $\Delta_0,\dots,\Delta_n$ are $n$-dimensional. Let $T$ be a proper subset of 
    $\{0,\dots,n\}$ and consider the ideal $I_T = (F_i, \, i \in T)$. Then, $(I_T^{\sat})_{\delta} = (I_T)_{\delta}$.
\end{lemma}

\begin{proof}
    Consider the cohomology groups $H_{\mathfrak{b}}^i(K_j(F_T))$ where $K_j(F_T)$ denotes the Koszul complex associated to $I_T$. Then, by \eqref{localsheaf},
    $$H_{\mathfrak{b}}^{i}(K_j(F_T))_{\delta} = \bigoplus^{J \subset T}_{|J| = j}H_{\mathfrak{b}}^{i}\big(C(-\sum_{k \in J} \alpha_k)\big)_{\delta} = \bigoplus^{J \subset T}_{|J| = j}H^{i-1}\big(X_{\Sigma},\sum_{k \notin J} \alpha_k - K_X\big).$$
    Using Serre duality \eqref{serreduality}, each of the summands is of the form:
    $$H^{i-1}(X_{\Sigma}, \sum_{k \notin J}\alpha_k - K_X) \simeq H^{n-i+1}(X_{\Sigma}, -\sum_{k \notin J}\alpha_k).$$
    As $J \subset T$ is a proper subset, $\sum_{k \notin J}\alpha_k$ is nef and its associated polytope is $n$-dimensional. Therefore, we can apply Theorem \ref{batyrevborisov}, implying that $H_{\mathfrak{b}}^{i}(K_j(F_T))_{\delta} = 0$ for all $i \geq 2$. If $i = 0,1$, we can use \eqref{remarkglobalsections}. Therefore, comparing the two spectral sequences of the Čech-Koszul double complex, we get $\big(I_T^{\sat}/I_T\big)_{\delta} = 0.$
\end{proof}

\section{Toric Sylvester forms}\label{sec:sylvforms}

We take again the notation of Section \ref{sec:duality}. As a consequence of Corollary \ref{cor:freemodule}, some graded components of $I^{\sat}/I$ are free $A$-modules and hence a natural question is to provide explicit $A$-bases for them. This is precisely the goal of this section. We first describe the graded component $(I^{\sat}/I)_\delta$, which essentially follows from \cite{cattani1995residues}. Then, we introduce Sylvester forms to deal with the  other cases. 
In what follows, we assume that the projective toric variety $X_\Sigma$ is $\sigma$-positive with respect to \lb{a} maximal smooth cone $\sigma \in \Sigma(n)$.

\medskip

\lb{Along the same lines as} \cite{cattani1995residues}, a nonzero element \lb{in} $(I^{\sat}/I)_{\delta}\simeq A$ \lb{can be constructed} as follows. Using Corollary \ref{corollarydecomposition}, if the polytopes $\Delta_0,\dots,\Delta_n$ are $n$-dimensional, one can decompose each polynomial as
\begin{equation}\label{eq:decompsylv}
F_i = z_1\cdots z_r {F}_{i,0} + x_1{F}_{i,1} + \dots + x_n{F}_{i,n},    
\end{equation}
and consider the determinant
\[\Sylv_0 = \det\begin{pmatrix} F_{i,j}\end{pmatrix}_{0\leq i,j \leq n}.\]
This homogeneous polynomial is called the \textit{toric jacobian}; we will denote its class modulo $I$ by $\sylv_0$. Observe that, by construction, $\Sylv_0$ is a linear form with respect to the coefficients of each $F_i$, $i = 0,\dots,n$.

\begin{lemma}
\label{allthevariables}
   Assume that $\Delta_0,\dots,\Delta_n$ are $n$-dimensional polytopes. Let $P \in I^{\sat}_{\delta}$ be any homogeneous polynomial  whose class in $(I^{\sat}/I)_{\delta}$ is nonzero. Then, for all $i = 0,\dots,n$, $P$ must have degree $\geq 1$ with respect to the coefficients of $F_i$.
\end{lemma}
\begin{proof}
    For simplicity, suppose that $P$ does not depend on the coefficients of $F_0$. For any maximal cone $\tau \in \Sigma(n)$, consider the monomial $x^{\mathbf{F}m_{0,\tau} + a_0}$ for some $m_{0,\tau} \in \mathcal{A}_i$, which is invertible in $C_{\Tilde{x}^\tau}$ by Lemma \ref{lemmaattheend}. Let  $c_{0,\tau}$ be the coefficient of $x^{\mathbf{F}m_{0,\tau} + a_0}$ in $F_0$ and consider $P$ as an element of $C_{\Tilde{x}^\tau}$. As $P \in I^{\sat}$, there must be $L \in \mathbb{Z}_{>0}$ such that:
    $$ (\Tilde{x}^{\tau})^L P = G_0F_0 + \dots + G_nF_n \in I.$$
    However, as $P$ does not involve $c_{0,\tau}$, we can change this coefficient in $C_{\Tilde{x}^\tau}$ by $\frac{ c_{0,\tau}x^{\mathbf{F}m + a_0} - F_0}{x^{\mathbf{F}m + a_0}}$ and without changing $P$. Therefore, $(\Tilde{x}^{\tau})^L P$ belongs to the ideal generated by $F_1,\dots,F_n$ in $C_{\Tilde{x}^\tau}$. Up to multiplying by some power $L' \geq L$, we must have that $(\Tilde{x}^{\tau})^{L'} P$ belongs to the ideal generated by $F_1,\dots,F_n$ in $C$. As the above conslusion holds for every $\tau \in \Sigma(n)$, we deduce that $P \in (F_1,\dots,F_n)^{\sat}_{\delta}$. Now, using that the polytopes $\Delta_0,\dots,\Delta_n$ are $n$-dimensional, Lemma \ref{lemmastrictusbset} implies that $P \in (F_1,\dots,F_n)_{\delta}$, contradicting that the class of $P$ modulo $I$ is nonzero.   
\end{proof}

\begin{proposition} 
\label{sylvesterform}
\cc{If the polytopes $\Delta_0,\dots,\Delta_n$ are $n$-dimensional,} the element $\Sylv_0$ belongs to $(I^{\sat})_\delta$. Moreover, 
% \footnote{The elements there are called inertia forms following \cite{joanolouformesdinertie}.}
$\sylv_0$ is independent of the choices of decompositions \eqref{eq:decompsylv}. In addition, if $\delta \notin \Gamma_0 \cup \Gamma_1$, then $\sylv_0$ is a generator of $(I^{\sat}/I)_{\delta}$ which is a free $A$-module of rank 1.
\end{proposition}
\begin{proof}
Note that if $\tau \in \Sigma(n)$, then either $\tau \neq \sigma$, in which case there is $k \in \{1,\dots,n\}$ such that $x_k$ divides $\Tilde{x}^{\tau}$ or $\tau = \sigma$, in which case $\Tilde{x}^{\tau} = z_1\cdots z_r$. Using the invariance of the determinant under column operations and using the decomposition in \eqref{eq:decompsylv}, we get
\begin{equation}\label{determinantsat}x_k\Sylv_{0} = \det\begin{blockarray}{ccc}
\begin{block}{(ccc)}
\cdots &  x_k F_{0,k} & \cdots\\
\cdots &  \vdots & \cdots\\
\cdots &  x_k F_{n,k} & \cdots\\
\end{block}
\end{blockarray} = \det\begin{blockarray}{ccc}
\begin{block}{(ccc)}
\cdots &  F_0 & \cdots\\
\cdots &  \vdots & \cdots\\
\cdots &  F_n & \cdots\\
\end{block}
\end{blockarray} \in I, \, k = 1,\dots,n.\end{equation}
The same holds for the monomial $z_1\cdots z_r$. Therefore, we deduce that $\Sylv_0 \in I^{\sat} = (I:\mathfrak{b}^{\infty})$. In order to prove that $\sylv_0$ has degree $\delta$, we find the degree of each entry $(i,j)$ of the matrix defined by the $F_{i,j}$'s. In \eqref{eq:decompsylv}, we divided the monomials of degree $\alpha_i$ by a monomial of degree 
$$\begin{cases}\pi(e_j) & \text{if the monomial is } x_k \text{ for } k = 1,\dots,n, \\ \pi(\sum_{j = 1}^{r} e_{n+j}) & \text{if the monomial is } z_1\cdots z_r, \end{cases}$$ where  $\{e_j\}_{j = 1}^{n+r}$ is the canonical basis of $\mathbb{Z}^{\Sigma(1)}$. On the other hand, the anticanonical class $K_X$ coincides with the degree of the monomial $x_1\cdots x_n z_1 \cdots z_r$ (see \cite[Theorem 8.2.3]{coxlittleschneck}), which is equal to $\pi(\sum_{j = 1}^{n+r}e_j)$. Therefore, the degree of each of the summands constituting the determinant is equal to:
\begin{equation}
\label{determinant}
    \sum_{i = 0}^n \big( \alpha_i - \pi(e_{\tau(i)}) \big) = \left(\sum_{i = 0}^n \alpha_i \right)- K_X = \delta,
\end{equation}
where $e_0 = \sum_{k=n+1}^{n+r}e_k$ and $\tau$ is any permutation of $\{0,\dots,n\}$. 

The fact that $\sylv_0$ is nonzero and the independence from the choice of the decompositions in \eqref{eq:decompsylv} are consequences of the global transformation law; see \cite[Remark 2.12 iii), iv)]{cattani1995residues}.

\cc{If $\delta \notin \Gamma_0 \cup \Gamma_1$,  $(I^{\sat}/I)_{\delta}$ is a  free $A$-module of rank one. By Lemma \ref{allthevariables}, any generator $g$ of $(I^{\sat}/I)_{\delta}$ must have degree greater or equal than $1$ with respect to the coefficients of $F_i$ for $i = 0,\dots,n$. On the other hand, the construction of $\sylv_0$ indicates that for all $i = 0,\dots,n$, the degree of $\sylv_0$ with respect to the coefficients of $F_i$ is smaller or equal to 1. Thus, if we write $\sylv_0 = cg$ for some $c \in A$, the degree of $c$ with respect to $A$ must be zero, implying that $c \in \mathbf{k}$. This implies that $\sylv_0$ is also a generator of $(I^{\sat}/I)_{\delta}$ as an $A$-module.}
\end{proof}

\cc{In order to use \cite[Remark 4.12 iv)]{cattani1995residues}, we need to be able to specialize to values in the field of complex numbers $\mathbb{C}$. Therefore, from now on, we assume that the field $\mathbf{k}$ is a \textit{subfield of the complex numbers.}} \lb{Assuming $\delta \notin \Gamma_0 \cup \Gamma_1$, Theorem \ref{batyrevborisov} implies that the Sylvester form $\sylv_0$ corresponds} to the unique lattice point in the interior of the polytope $\Delta_{\Sigma}$ associated to the anticanonical divisor $K_X$, i.e.
\[ (I^{\sat}/I)_{\delta } \simeq H^{n+1}_{\mathfrak{b}}(C(-\sum \alpha_i))_{\delta} \simeq H^{n}(X_{\Sigma}, - K_X) \simeq \oplus_{m \in \Relint( \Delta_{\Sigma})}A\chi^{-m}.\]    

So far, we proved that the toric Jacobian $\sylv_0$ yields an $A$-basis of $(I^{\sat}/I)_{\delta}\simeq A$. The next step is to construct an $A$-basis of $(I^{\sat}/I)_{\delta - \nu}$ when it is a free  $A$-module.

\begin{definition}
Let $X_{\Sigma}$ be a projective toric variety which is $\sigma$-positive for some $\sigma \in \Sigma(n)$.  Assume that the polytopes $\Delta_0,\dots,\Delta_n$ are $n$-dimensional. 
Let $\nu \in \Cl(X_{\Sigma})$ be a nef Cartier class and $\Delta_{\nu}$ be the corresponding polytope written as in \eqref{eq:polytopesnu} and satisfying $0 \leq \nu_{n+j} < \min_{i = 0,\dots,n} a_{i,n+j}$ for $j = 1,\dots,r$.  According to Theorem \ref{theoremdecompositionrestated}, 
for any  $x^{\mu} \in R_{\nu}$ and for any $i\in\{0,\ldots,n\}$ the polynomial $F_i$ can be decomposed as
\begin{equation}
\label{eq:Fidecomp2}
F_i = z_1^{\mu_{n+1}+1}\cdots z_r^{\mu_{n+r}+1}F^{\mu}_{i,0} + x_{1}^{\mu_1+1}F^{\mu}_{i,1} + \dots + x_n^{\mu_n + 1}F^{\mu}_{i,n}.
\end{equation}
We define the \textit{toric Sylvester form} $\Sylv_{\mu}$ as the determinant 
\[ \Sylv_{\mu} = \det(F^{\mu}_{i,j})_{0\leq i,j\leq n}.\]
The  class of $\Sylv_{\mu}$ modulo $I$ is denoted by $\sylv_{\mu}$. Observe that, as with $\Sylv_0$, the Sylvester forms are linear in the coefficients of $F_i$ for $i = 0,\dots,n$.
\end{definition}

%\lb{QUESTION: at the beginning of the section the $Delta_i$'s are assumed n-dimensional for the CCD construction. Here, I do not see such hypothesis. Is it ok? Why?

%\medskip
\cc{
If we are given two different monomials $x^{\mu}, x^{\mu'} \in C_{\nu}$, there must be some $k \in \{1,\dots,n\}$ such that $\mu_{k} \neq \mu'_k$. Otherwise, using \eqref{eq:murelation}, we can derive that $x^{\mu} = x^{\mu'}$. With this, we can introduce the following  \textit{lexicographical monomial order}.
%}
\begin{definition}
\label{definitionorder}
Given two monomials $x^{\mu}$ and $x^{\mu'}$ of degree $\nu$, we say $\mu < \mu'$ if $k_0 = \min\{k \in \{1,\dots,n\} \quad \mu_{k} \neq \mu_k\}$ satisfies $\mu_{k_0} < \mu'_{k_0}$.
\end{definition}}
%The following result yields an explicit version of the duality in Theorem \ref{dualitytheorem}.}

\begin{theorem}\label{sylvesterformduality} 
Let $X_{\Sigma}$ be a projective toric variety which is $\sigma$-positive for some $\sigma \in \Sigma(n)$ and \cc{that the polytopes $\Delta_0,\dots,\Delta_n$ are $n$-dimensional}. Let $\nu \in \Cl(X_{\Sigma})$ be a class satisfying the hypotheses of Theorem \ref{theoremdecompositionrestated}. Then, for every $x^\mu \in R_\nu$, $\Sylv_\mu$ belongs to $(I^{\sat})_{\delta - \nu}$ and its class $\sylv_{\mu}$ is a nonzero element in $(I^{\sat}/I)_{\delta - \nu}$ . Moreover, for $x^{\mu},x^{\mu'} \in R_{\nu}$, we have
\begin{equation}\label{duality} x^{\mu'}\sylv_{\mu} = \begin{cases} \sylv_0 & \mu = \mu'  \\ 0 & \mu < \mu'\end{cases}\end{equation}
As a consequence, the Sylvester forms $\{\sylv_{\mu}\}_{x^{\mu} \in R_{\nu}}$ are linearly independent in $(I^{\sat}/I)_{\delta-\nu}$.
\end{theorem}

\begin{proof}
The fact that $\Sylv_\mu$ is of degree $\delta - \nu$ follows by analyzing the degree of each summand in $\det(F^{\mu}_{i,j})$ as in \eqref{determinant}. Moreover, we can use the same argument as in \eqref{determinantsat}, to see that $x_k^{\mu_k+1}\Sylv_{\mu} \in I$ for all $k = 1,\dots,n$ and  $z_1^{\mu_{n+1}+1}\cdots z_r^{\mu_{n+r}+1}\Sylv_{\mu} \in I$. This proves that $\Sylv_{\mu} \in I^{\sat}_{\delta-\nu}$. Consider two distinct monomials $x^{\mu}, x^{\mu'} \in R_{\nu}$ such that $\mu < \mu'$, then there is $k_0 \in \{1,\dots,n\}$ such that:
\[ x^{\mu'}\Sylv_{\mu} = \frac{x^{\mu'}}{x_{k_0}^{\mu_{k_0} + 1}}x_{k_0}^{\mu_{k_0} + 1}\Sylv_{\mu} \in I\]
and hence $x^{\mu'}\sylv_{\mu} = 0 \in (I^{\sat}/I)_{\delta - \nu}$. On the other hand, we have:

\[ x^{\mu}\Sylv_{\mu} = x_1^{\mu_1}\cdots x_n^{\mu_n} z_1^{\mu_{n+1}}\cdots z_r^{\mu_{n + r}}\det(F^{\mu}_{i,j}) = \det(x_j^{\mu_j}F^{\mu}_{i,j})\]
but at the same time, the decomposition
\[ F_i = z_1\cdots z_r z_1^{\mu_{n+1}}\cdots z_r^{\mu_{n+r}}F^{\mu}_{i,0} + x_1x_1^{\mu_1}F^{\mu}_{i,1} + \dots + x_nx_n^{\mu_2}F^{\mu}_{i,n} \]
gives the Sylvester form $\sylv_0$, implying that \cc{$x^{\mu}\sylv_{\lb{\mu}} = \sylv_0$} and that $\Sylv_{\mu} \notin I$. \cc{From these two facts, we can derive that the Sylvester forms are nonzero in $(I^{\sat}/I)_{\delta-\nu}$ and linearly independent. Namely, if we have a relation $\sum_{x^{\mu} \in R_{\nu}}\lambda_{\mu} \sylv_{\mu} = 0$ for some $\lambda_{\mu} \in A$, then multiplying by the monomials $x^{\mu} \in R_{\nu}$ in decreasing order with respect to $<$, we derive that $\lambda_{\mu} = 0$ for all $x^{\mu} \in R_{\nu}$.} 
\end{proof}

We notice that the relation between Sylvester forms and monomials stated in Theorem \ref{sylvesterformduality} can also be deduced from the global transformation law in \cite{cattani1995residues}. As the decomposition we provided in Theorem \ref{theoremdecompositionrestated} differs from the one provided in \cite[Section 4]{buse2021multigraded} for the multihomogeneous case, we can see that, in general, the Sylvester forms and the monomials of degree $\nu$ do not form a pairing. In particular, we can see that there is a matrix $\mathcal{D} = (\mathcal{D}_{\mu,\mu'})_{x^{\mu},x^{\mu'} \in R_{\nu}}$ whose entries are polynomials in $A$ ordered with respect to $<$ and satisfy that:
\begin{equation}
\label{equationnotpairing}
x^{\mu'}\sylv_{\mu} = \mathcal{D}_{\mu,\mu'}\sylv_0.
\end{equation}
Note that $\mathcal{D}_{\mu,\mu'}$ can be computed using the global transformation law and noting that:
\begin{equation}\label{equationresidue}\mathcal{D}_{\mu,\mu'} = \Residue_{(F_0,\dots,F_n)}(x^{\mu'}\sylv_{\mu}) = \Residue_{(x_1^{\mu_1+1},\dots,x_n^{\mu_n+1},z_1^{\mu_{n+1}+1}\cdots z_r^{\mu_{n+r} + 1})}(x^{\mu'})\end{equation}
This last residue is zero, if and only if, $x^{\mu'}$ belongs to the ideal $(x_1^{\mu_1+1},\dots,x_n^{\mu_n+1},z_1^{\mu_{n+1}+1}\cdots z_r^{\mu_{n+r} + 1})$. Otherwise, as the residue does not depend on $A$, it must be a nonzero element in $\mathbf{k}$, which is also independent of the decomposition \eqref{eq:decomposition} giving rise to $\sylv_{\mu}$. Theorem \ref{sylvesterformduality} implies that the matrix $\mathcal{D}$ is lower triangular with ones in the diagonal. Therefore, $\mathcal{D}$ is invertible and its inverse has entries in $\mathbf{k}$. If $X_{\Sigma} = \mathbb{P}^n$, the decomposition in \eqref{eq:Fidecomp2} coincides with the one given in \cite{joanolouformesdinertie} and we can see that $\mathcal{D}$ is the identity matrix. We refer to Section \ref{sec:residues} for more details on the toric residue and an example where we explicitly show that the matrix $\mathcal{D}$ is not the identity matrix; see Example \ref{example:residues}.
\medskip

In the next theorem, we prove that the Sylvester forms yield an $A$-basis of $(I^{\sat}/I)_{\delta}$, when it is a free $A$-module. This result is the key to the applications we discuss in the following sections.% A similar decomposition to the one given in \cite{buse2021multigraded} could possibly be given if we preserve the idea of having several groups of variables, i.e., the fan $\Sigma$ is a splitting fan. Using the classification of smooth complete toric varieties whose with a splitting fan given in \cite[Corollary 4.4]{batyrevprimitive}, we strongly beleive that all these varieties satisfy the $\sigma$-positive property.}

\begin{theorem}
\label{sylvesterformsbasis}
    Under the assumptions of Theorem \ref{sylvesterformduality} and if $\delta-\nu \notin \Gamma_0\cup\Gamma_1$ (see Notation \ref{notation}), $\{ \sylv_{\mu} \}_{x^\mu \in C_{\nu}}$ is an $A$-basis of $(I^{\sat}/I)_{\delta - \nu}$. Moreover, the classes $\sylv_{\mu}$ do not depend on the choice of the decompositions in \eqref{eq:Fidecomp2}.
\end{theorem}
\begin{proof}
    In Theorem \ref{sylvesterformduality}, we proved that the set of forms $\{\sylv_{\mu}\}_{x^{\mu} \in R_{\nu}}$ is linearly independent. Moreover, as in \cite[Theorem 4.9]{buse2021multigraded}, consider the canonical basis of $\Hom(C_{\nu}, A)$ which is dual to the monomial basis of $C_{\nu}$. Namely, to each monomial  $x^{\mu} \in C_{\nu}$, we associate the map:
    $$X^{\mu}: C_{\nu} \xrightarrow[]{} A $$ which sends $x^{\mu}$ to one and every other monomial to $0$. Moreover,  consider the $A$-linear isomorphisms $$\phi: A \xrightarrow[]{} (I^{\sat}/I)_{\delta} \quad c \xrightarrow[]{} c\sylv_0$$ and 
    $$\mathcal{D}_{\nu}: C_{\nu} \xrightarrow[]{} C_{\nu} \quad x^{\overline{\mu}} \xrightarrow[]{} \sum_{x^{\mu'} \in R_{\nu}} \mathcal{D}_{\mu',\overline{\mu}}x^{\mu'}$$
    where $\phi$ is an isomorphism by Proposition \ref{sylvesterform} and $\mathcal{D}_{\nu}$ is an isomorphism because the matrix $\mathcal{D}$ is invertible. Therefore, by \eqref{equationnotpairing}, the composition $\phi \circ X^{\mu} \circ \mathcal{D}_{\nu}$ corresponds to multiplying the monomials in $C_{\nu}$ by $\sylv_{\mu}$ and realizes the isomorphism $(I^{\sat}/I)_{\delta-\nu} \simeq \Hom(C_{\nu},A)$. This proves that the Sylvester forms $\{\sylv_{\mu}\}_{x^{\mu} \in R_{\nu}}$ yield an $A$-basis of $(I^{\sat}/I)_{\delta-\nu}$. Moreover, this also implies that the classes $\sylv_{\mu}$ are independent of the decompositions \eqref{eq:Fidecomp2} since the maps  $\phi \circ X^{\mu} \circ \mathcal{D}_{\nu}$ are themselves independent of these decompositions.  
\end{proof} 

\section{Application to toric elimination matrices}
\label{sec::hybridmat}

An important motivation for studying the structure of the saturation of an ideal generated by generic sparse polynomials is for applications in elimination theory, in particular for solving sparse polynomial systems. In this section, we introduce a family of matrices whose construction involves toric Sylvester forms. It yields new compact elimination matrices that can be used for solving $0$-dimensional sparse polynomial systems via linear algebra methods. We refer the reader to \cite{emirismourrain,bender2021toric, telenthesis} for a thorough exposition of such methods that we will not discuss in this paper. 

In what follows, \lb{$X_{\Sigma}$ will denote a projective toric variety which is assumed to be smooth and $\sigma$-positive for some maximal cone $\sigma \in \Sigma(n)$, and we will consider a generic sparse polynomial system defined by homogeneous polynomials $F_0, \dots, F_n$ as defined in \eqref{eq:Fi}. We require $X_{\Sigma}$ to be smooth} because we will use the Grothendieck-Serre formula \eqref{grserre}. This setting covers many cases that are of interest for applications. We notice that the smoothness assumption is not very restrictive as $X_{\Sigma}$ can be replaced by one of its  desingularization \cc{varieties} (see e.g.~\cite[Chapters 10,11]{coxlittleschneck}),  \lb{but the preservation of the \cc{$\sigma$-positive} property under desingularization is not obvious}.

\begin{notation}\label{notSec5} The elimination matrices we will consider are universal with respect to the coefficients of the $F_i$'s, so we introduce the following notation to study rigorously their properties under specialization of these coefficients. Recall that $I$ denotes the ideal in $C$ generated by $F_0,\ldots,F_n$.

Any specialization (i.e.~ring morphism) $\theta: A \xrightarrow[]{} \mathbf{k}$ induces a surjective map $C \xrightarrow[]{} R$ where $R = \mathbf{k}[x_{\rho} \, : \, \rho \in \Sigma(1)]$ (this map leaves invariant the variables $x_\rho$). For all $i = 0,\ldots,n$, we define $f_i = \theta(F_i) \in R$, we denote by  $I(f)$ the homogeneous ideal $( f_0,\ldots,f_n )$ of $R$ and set $B(f) = R/I(f)$. Moreover, we also set $B^{\sat} = C/I^{\sat}$, $B(f)^{\sat} = R/I(f)^{\sat}$ and $B^{\sat}(f) = C/ I^{\sat}(f)$ (observe that $I(f)^{\sat}$ and $I^{\sat}(f)$ are in general not the same ideals). Finally, for any matrix $\mathbb{M}$ with coefficients in $A$, we denote by $\mathbb{M}(f)$ its specialization by $\theta: A \xrightarrow[]{} \mathbf{k}$. We will refer as $V(I(f))$ to the zero set of the polynomial system defined by $I(f)$ over $(X_{\Sigma})_{\overline{\mathbf{k}}}$ where $\overline{\mathbf{k}}$ denotes an algebraic closure of $\mathbf{k}$ \footnote{In Section \ref{sec:sylvforms}, we assumed that $\mathbf{k}$ is a subfield of $\mathbb{C}$. Thus, we can consider that $V(I(f))$ is the zeros of $I(f)$ over $\mathbb{C}$.}.

\lb{In what follows, } we will consider $\Pic(X_{\Sigma})$ instead of $\Cl(X_{\Sigma})$ as all Weil divisors are Cartier in a smooth variety (see \cite[Proposition 4.2.6]{coxlittleschneck}).
\end{notation}

\subsection{Hybrid elimination matrices}\label{subsec:hybridmat}

We begin by describing precisely what we mean by an elimination matrix $\mathbb{M}$ associated to the polynomials $F_0,\ldots,F_n$. It is a matrix 
whose columns are filled with coefficients of some homogeneous forms that are of the same degree and that all belong to the saturated ideal $I^{\sat} \subset C$. Thus, its entries are polynomials in $A$. Moreover, it is required that for any specialization map $\theta: A \xrightarrow[]{} \mathbf{k}$ 
the following two properties hold:
\begin{itemize}
    \item[i)] The corank of $\mathbb{M}(f)$ is equal to zero, if and only if, $f_0 = \dots = f_n = 0$ has no solution in $X_{\Sigma}$. 
    \item[ii)] If the number of solutions of $f_0 = \dots = f_n = 0$ (over $\overline{\mathbf{k}}$) is finite in $X_{\Sigma}$ and equals $\kappa$, then the corank of $\mathbb{M}(f)$ is  $\kappa$.
\end{itemize}

\lb{We note that the first property yields a certificate of existence of a common root of the $f_i$'s, which is related to sparse resultants, a topic we will address in the next section}. The second property is mainly required for solving 0-dimensional polynomial systems by means of linear algebra techniques based on eigen-computations. In this approach, the common roots of the $f_i$'s are extracted from the cokernel of $\mathbb{M}(f)$ \lb{(see e.g.~\cite{yetbendertelen})}. 

A very classical family of elimination matrices is obtained by filling columns with all the multiples of the $F_i$'s of a certain degree. These matrices are usually called Macaulay-type matrices and are widely used for solving 0-dimensional polynomial systems (see for instance \cite{bender2021toric}). To be more precise, these matrices, 
 that we will denote by $\mathbb{M}_{\alpha}$, are presentation matrices of the $A$-module $B_\alpha$, i.e.~are matrices of the maps
\begin{eqnarray}\label{eq:Malphamap}
\label{nonhybrid}
    \big(\oplus_{i = 0}^nC(-\alpha_i)\big)_{\alpha} & \xrightarrow[]{} &  C_\alpha \\ 
	\nonumber (G_0,\dots,G_n) & \mapsto & \sum_{i = 0}^nG_iF_i.	
\end{eqnarray}
Of course, some conditions on $\alpha \in \Pic(X_{\Sigma})$ are required in order to guarantee that $\mathbb{M}_\alpha$ is an elimination matrix; we refer to \cite{emirismourrain} and to \cite[Chapter 5]{telenthesis} for more details. Applying results we proved in the previous sections, we are going to extend the family of Macaulay-type matrices by using toric Sylvester forms. We recall that Sylvester forms belong to $I^{\sat}$ by Theorem \ref{sylvesterformduality}.

\begin{definition}\label{def:H_alpha}
Let $\alpha$ be such that $\left(I^{\sat}/I\right)_\alpha$ is a free $A$-module \lb{generated by Sylvester forms, so that $\left(I^{\sat}/I\right)_\alpha \simeq \oplus_{x^\mu \in C_{\delta - \alpha}} A$ (see Corollary \ref{cor:freemodule} and Theorem \ref{sylvesterformsbasis}), and consider the map}
\begin{eqnarray}\label{hybrid}
    \big(\oplus_{i = 0}^nC(-\alpha_i)\big)_{\alpha} \oplus \left(\bigoplus_{x^\mu \in C_{\delta - \alpha}} A \right) & \rightarrow &  C_{\alpha}\\ \nonumber (G_0,\ldots,G_n)\oplus (\ldots, \ccc{\ell}_{\mu},\ldots ) &  \mapsto & \sum_{i = 0}^n G_iF_i + \sum_{x^\mu \in C_{\delta - \alpha}}\ccc{\ell}_{\mu}\Sylv_{\mu}.	
\end{eqnarray}
Its matrix \lb{(in canonical bases)} is called a \textit{hybrid elimination matrix} and denoted by $\mathbb{H}_{\alpha}$.
\end{definition}
 The matrices  $\mathbb{H}_{\alpha}$ are called \emph{hybrid} because they are composed of two blocks, one from the classical Macaulay-type matrices and another one built from toric Sylvester forms; \cc{see Example \ref{example:h2}}. In particular,   $\mathbb{M}_\alpha=\mathbb{H}_\alpha$ if $(I^{\sat}/I)_{\alpha} = 0$, so that the family of matrices $\mathbb{H}_\alpha$  extends  the family of Macaulay-type matrices $\mathbb{M}_\alpha$. \lb{Thus, from now on} we will use the notation $\mathbb{H}_\alpha$ instead of $\mathbb{M}_\alpha$. Our next step is to prove that these matrices are elimination matrices.
 
\subsection{Main properties}

 In this section, we first prove that the matrices $\mathbb{H}_\alpha$ introduced in Definition \ref{def:H_alpha} are elimination matrices. Then, we give an illustrative example and also provide another criterion to construct the matrices $\mathbb{H}_\alpha$ without relying on the computation of the supports $\Gamma_0$ and $\Gamma_1$ \lb{(see Notation \ref{notation})}. 
 
First, suppose given a specialization map (see Notation \ref{notSec5}) and a degree $\alpha$. From the results of Section \ref{sec:duality} and Section \ref{sec:sylvforms}, and also Definition \ref{def:H_alpha}, we deduce that the image of the matrix $\mathbb{H}_\alpha(f)$ is $I^{\sat}(f)_\alpha$, so that its corank is $\HF(B^{\sat}(f),\alpha)$. Therefore, a natural question is to compare this Hilbert function of $B^{\sat}(f)$ with the one of $B(f)^{\sat}$ in degrees for which hybrid matrices $\mathbb{H}_{\alpha}$ are defined (see Definition \ref{def:H_alpha}). We recall that we \lb{use} the notation of Section \ref{sec:duality} and we assume that the toric variety $X_\Sigma$ is smooth and $\sigma$-positive for a maximal cone $\sigma \in \Sigma(n)$.

\begin{lemma}\label{lem:HFBsat}
Let $\alpha \notin \Gamma_0 \cup \Gamma_1 \subset \Pic(X_{\Sigma})$ and suppose given specialized polynomials $f_0,\ldots,f_n$ defining a 0-dimensional subscheme in $X_\Sigma$, possibly empty, of $\kappa$ points, counted with multiplicity. Then, \[\HF(B(f)^{\sat},\alpha)=\HF(B^{\sat}(f),\alpha)=\kappa.\]
\end{lemma}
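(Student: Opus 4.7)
The approach is to combine the Grothendieck-Serre formula with an adaptation of the double complex spectral sequence from the proof of Theorem~\ref{dualitytheorem} to the specialized polynomials $f_0,\ldots,f_n$ in $R$, establishing the two equalities separately.

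First, I would prove $\HF(B(f)^{\sat},\alpha)=\kappa$. Applying \eqref{grserre} to $B(f)^{\sat}$: saturation forces $H^0_{\mathfrak{b}}(B(f)^{\sat})=0$; the $0$-dimensionality of $V(f)\subset X_\Sigma$ makes the associated coherent sheaf supported at finitely many points, so \eqref{localsheaf} yields $H^i_{\mathfrak{b}}(B(f)^{\sat})_\alpha=0$ for $i\geq 2$; and $\HP(B(f)^{\sat},\alpha)=\kappa$ by Remark~\ref{remarkzerodimensional}. Together with $H^1_{\mathfrak{b}}(B(f)^{\sat})_\alpha=H^1_{\mathfrak{b}}(B(f))_\alpha$ (quotienting by $\mathfrak{b}$-torsion does not change $H^1_{\mathfrak{b}}$), this reduces the first equality to showing $H^1_{\mathfrak{b}}(B(f))_\alpha=0$. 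For the latter, I would reproduce the spectral sequence analysis on $\mathcal{C}_{\mathfrak{b}}^{\bullet}(K_{\bullet}(f))$ in $R$: on the \v{C}ech-first side, the $E_1$-page $H^p_{\mathfrak{b}}(K_q(f))$ depends only on the degrees $\alpha_i$, so the diagonal and sub-diagonal entries (excluding the last row) still vanish at $\alpha\notin\Gamma_0\cup\Gamma_1$; on the Koszul-first side, although Lemma~\ref{lemmaregularsequence} no longer applies, the $f_i$'s generate the unit ideal at every point of $X_\Sigma\setminus V(f)$, so the Koszul complex is locally acyclic there and the sheafified Koszul homologies $\widetilde{H_q(f)}$ on $X_\Sigma$ are supported on the $0$-dimensional set $V(f)$, giving $H^p_{\mathfrak{b}}(H_q(f))_\alpha=0$ for all $p\geq 2$ and $q\geq 1$. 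Comparing the abutments in the total degree containing $H^1_{\mathfrak{b}}(B(f))$ then delivers the vanishing.

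Second, the equality $\HF(B^{\sat}(f),\alpha)=\HF(B(f)^{\sat},\alpha)$ reduces to $I^{\sat}(f)_\alpha=I(f)^{\sat}_\alpha$, the inclusion $\subset$ being automatic from specialization of the saturation. Comparing the two spectral sequences in total degree $0$: Serre duality identifies the \v{C}ech-first abutment with $B(f)_{\delta-\alpha}^\vee$, of $k$-dimension $\dim_k R_{\delta-\alpha}$ (since $I_{\delta-\alpha}=0$ implies $I(f)_{\delta-\alpha}=0$), while the vanishings above force the Koszul-first abutment to admit a two-step filtration with graded pieces $(I(f)^{\sat}/I(f))_\alpha$ and $H^1_{\mathfrak{b}}(H_1(f))_\alpha$, yielding
\[
\dim_k (I(f)^{\sat}/I(f))_\alpha+\dim_k H^1_{\mathfrak{b}}(H_1(f))_\alpha=\dim_k R_{\delta-\alpha}.
\]
In parallel, applying the long exact $\mathrm{Tor}$ sequence to $0\to(I^{\sat}/I)_\alpha\to(C/I)_\alpha\to(C/I^{\sat})_\alpha\to 0$ and using the freeness of $(I^{\sat}/I)_\alpha$ from Corollary~\ref{cor:freemodule} (so that $\dim_k (I^{\sat}/I)_\alpha\otimes_A k=\dim_k R_{\delta-\alpha}$) controls the dimension of $(I^{\sat}(f)/I(f))_\alpha$ via the image of $\mathrm{Tor}_1^A(k,(C/I^{\sat})_\alpha)$; matching this defect with $\dim_k H^1_{\mathfrak{b}}(H_1(f))_\alpha$ forces $\dim_k(I^{\sat}(f)/I(f))_\alpha=\dim_k(I(f)^{\sat}/I(f))_\alpha$, and hence the equality of ideals.

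The main obstacle is precisely this last matching of dimensions: in the generic case Lemma~\ref{lemmaregularsequence} makes all higher Koszul homologies $\mathfrak{b}$-torsion and the spectral sequence collapses cleanly, but in the specialized setting the surviving graded piece $H^1_{\mathfrak{b}}(H_1(f))_\alpha$ must be shown to encode precisely the linear dependencies acquired by specialized Sylvester forms, requiring a careful comparison of the $\mathrm{Tor}$ image with the $E_\infty$-filtration of the abutment.
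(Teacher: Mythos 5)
Your treatment of the first equality $\HF(B(f)^{\sat},\alpha)=\kappa$ follows the paper's own route: Grothendieck--Serre, vanishing of $H^0_{\mathfrak{b}}$ and $H^{\geq 2}_{\mathfrak{b}}$ from saturation and finiteness of $V(f)$, then a spectral-sequence argument for $H^1_{\mathfrak{b}}$; your observation that the specialized Koszul homology sheaves are supported on the $0$-dimensional $V(f)$ is the right substitute for Lemma \ref{lemmaregularsequence}, which indeed is unavailable after specialization. One caveat: the total degree carrying $H^1_{\mathfrak{b}}(B(f))$ also carries the last-row term $H^{n+1}_{\mathfrak{b}}(K_n(f))_{\alpha}$, which you yourself exclude from the $\Gamma_0$-vanishing; its $E_2$-subquotient is dual to $H_1(K_\bullet(f))_{\delta-\alpha}$, so ``comparing abutments'' does not by itself deliver the vanishing unless this piece is also killed (e.g.\ because $K_1(f)_{\delta-\alpha}=0$, as happens in the degrees where hybrid matrices are actually built). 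The paper is equally terse here, so I only flag it.

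The genuine gap is in the second equality, where you depart from the paper and do not land. The paper never performs your $\mathrm{Tor}$ computation: it reduces $\HF(B^{\sat}(f),\alpha)=\kappa$ to the vanishing of $H^0_{\mathfrak{b}}(B^{\sat}(f))_{\alpha}$ and $H^1_{\mathfrak{b}}(B^{\sat}(f))_{\alpha}$ and then invokes the specialization result \cite[Proposition 6.3]{chardinpowers} to deduce these from the corresponding vanishings for the \emph{generic} module $B^{\sat}$, which hold outside $\Gamma_0\cup\Gamma_1$. Your alternative — bounding $\dim_k(I^{\sat}(f)/I(f))_{\alpha}$ via the image of $\mathrm{Tor}_1^A(k,(C/I^{\sat})_{\alpha})$ and matching the resulting defect against the graded piece $H^1_{\mathfrak{b}}(H_1(f))_{\alpha}$ of the $E_\infty$-filtration — is plausible in spirit, but you explicitly leave its decisive step unproved (``must be shown \ldots requiring a careful comparison''). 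Nothing in the argument actually identifies the image of that connecting map inside $(I^{\sat}/I)_{\alpha}\otimes_A k$ with $H^1_{\mathfrak{b}}(H_1(f))_{\alpha}$, and that identification \emph{is} the content of the claim, so as written this is a strategy sketch rather than a proof. You also quietly assume $I_{\delta-\alpha}=0$ (to get $\rank_A(I^{\sat}/I)_{\alpha}=\dim_k R_{\delta-\alpha}$ from Corollary \ref{cor:freemodule} and to identify the degree-zero abutment with $R_{\delta-\alpha}^{\vee}$), which is not among the lemma's hypotheses. To close the argument, either supply the missing comparison or, as the paper does, appeal to a specialization theorem for local cohomology such as \cite[Proposition 6.3]{chardinpowers}.
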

\begin{proof} This proof goes along the same lines as \cite[Lemma 2.7]{buse2021multigraded}. First, one observes that  $I(f)\subset I^{\sat}(f) \subset I(f)^{\sat}$ so that $B(f)^{\sat}$, $B^{\sat}(f)$ and $B(f)$ have the same Hilbert polynomial, which is the constant $\kappa$ by our assumption. 

Now, $H^i_{\mathfrak{b}}(B(f)^{\sat}) = 0$ for $i=0$ and for all $i>1$ since $V(I(f))$ is finite. Applying Grothendieck-Serre formula, it follows that $\HF(B(f)^{\sat},\alpha)=\kappa$ for all $\alpha$ such that $H^1_{\mathfrak{b}}(B(f)^{\sat})_\alpha=0$. Analyzing the two spectral sequences associated to the Čech-Koszul complex of $f_0,\ldots,f_n$, we get that the above vanishing holds for all $\alpha \notin \Gamma_0 \cup \Gamma_1$.

Similarly, Grothendieck-Serre formula and the finiteness of $V(I(f))$ imply that $\HF(B^{\sat}(f),\alpha)=\kappa$ for all $\alpha$ such that $H^0_{\mathfrak{b}}(B(f)^{\sat})_\alpha=H^1_{\mathfrak{b}}(B(f)^{\sat})_\alpha=0$. By \cite[Proposition 6.3]{chardinpowers}, the vanishing of these modules can be derived from the similar vanishing conditions  $H^0_{\mathfrak{b}}(B^{\sat})_\alpha=H^1_{\mathfrak{b}}(B^{\sat})_\alpha=0$. These latter conditions hold for all $\alpha \notin \Gamma_0 \cup \Gamma_1$, which concludes the proof.
\end{proof}

\begin{remark} As a consequence of the above lemma, the canonical map from $I^{\sat}_\alpha$ to $I(f)^{\sat}_\alpha$, which is induced by a specialization $\theta$, is surjective, i.e.~generators of $I(f)^{\sat}_\alpha$ can be computed by means of universal formulas. 
\end{remark}

\begin{theorem}
\label{prop:Helimmat} 
Assume that the toric variety $X_\Sigma$ is smooth and $\sigma$-positive for a maximal cone $\sigma\in \Sigma(n)$.
Then, for any $\alpha \notin \Gamma_0 \cup \Gamma_1 \subset \Pic(X_{\Sigma})$ \cc{satisfying that $\left(I^{\sat}/I\right)_\alpha\simeq \oplus_{x^\mu \in C_{\delta-\alpha}} A$}, the matrix $\mathbb{H}_\alpha$ is an elimination matrix, i.e.~it satisfies:
\begin{itemize}
    \item[i)] $\mathrm{corank}(\mathbb{H}_\alpha(f))=0$ if and only if $V(I(f))$ is empty in $X_\Sigma$,
    \item[ii)] If $V(I(f))$ is a finite subscheme of degree $\kappa$ in $X_\Sigma$, then $\mathrm{corank}(\mathbb{H}_\alpha(f))=\kappa$.
\end{itemize}
\end{theorem} 
\begin{proof} \lb{We first prove i)}. If $V(I(f))$ is empty, equivalently $B(f)^{\sat}=0$ \footnote{This equivalence follows by the Grothendieck-Serre formula which requires the smoothness of $X_\Sigma$.}, then $\HF(B^{\sat}(f),\alpha)=0$ by Lemma \ref{lem:HFBsat}. If $V(I(f))\neq \emptyset$, then the $f_i$'s have a common solution, say the point $p \in X_\Sigma$ (over $\mathbf{k}$) with defining ideal $I_p$ (radical and maximal in $R$). Therefore, since $I^{\sat}(f) \subset I(f)^{\sat}\subset I_p$
and $\HF(R/I_p,\beta) = 1$ for all $\beta \in \Pic(X_{\Sigma})$ by the maximality of $I_p$, we deduce that $\HF(R/I^{\sat}(f),\alpha) \neq 0$ for any $\alpha$.  The proof of ii) follows from Lemma \ref{lem:HFBsat}. 
\end{proof}

\begin{figure}[H]
    \centering
\begin{tikzpicture}

\draw[blue] (0,0) -- (2,0);
\draw[blue] (1,1) -- (2,0);
\draw[blue] (1,1) -- (0,1);
\draw[blue] (0,0) -- (0,1);

\filldraw[red] (0,0) circle (2pt) node[anchor=north] {\tiny (0,0)};
\filldraw[red] (1,0) circle (2pt) node[anchor=north] {\tiny (1,0)};
\filldraw[red] (2,0) circle (2pt) node[anchor=north] {\tiny (2,0)};
\filldraw[red] (0,1) circle (2pt) node[anchor=south] {\tiny (0,1)};
\filldraw[red] (1,1) circle (2pt) node[anchor=south] {\tiny (1,1)};

\end{tikzpicture}
    \caption{The polytopes $\Delta_i$ corresponding to the generic sparse homogeneous polynomials in Example \ref{example:h2} with the lattice points marked in red.}
    \label{fig:polytope}
\end{figure}

%We emphasize that the above results include the classical Macaulay-type matrices we mentioned in the previous %section.  
\begin{example}
\label{example:h2}
Let $M = \mathbb{Z}^2$ and $X_{\Sigma}$ be the Hirzebruch surface $\mathcal{H}_1$ described in Example \ref{example:h1}. Consider the following polytope presentations:
\[\Delta_i = \{m \in \mathbb{R}^2 \, : \, \langle m,(1,0) \rangle \geq 0, \: \langle m, (0,1) \rangle \geq 0, \: \langle m, (-1,-1)\rangle \geq -2, \: \langle m, (0,-1) \rangle \geq -1 \}, \, i=0,1,2.\]
$\mathcal{H}_1$ has the $\sigma$-positive property for $\sigma = \langle (1,0), (0,1)\rangle$. The class in $\Pic(\mathcal{H}_1) = \mathbb{Z}^2$ corresponding to these polytopes is $\alpha_i = (2,1)$, \lb{$i=0,1,2$,} and we write the corresponding generic homogeneous sparse polynomials as:
\begin{equation}\label{eq:F0F1F2}
F_0 = a_0z_1^2z_2 + a_1x_1z_1z_2 + a_2x_1^2z_2 + a_3x_2z_1 + a_4x_1x_2 \quad \text{resp. } F_1,F_2 \text { with coefficients } b_j,c_j, j = 0,\dots,4.
\end{equation}
\begin{figure}[H]

  \begin{floatrow}
\floatbox[{\capbeside\thisfloatsetup{capbesideposition={right,center},capbesidewidth=10cm}}]{figure}{     
\begin{tikzpicture}[scale=0.75]

%    \draw[pattern=north east lines, pattern color=blue] (1/2,0) -- (1/2,-2) -- (-2,-2) -- (-2,0) -- (1/2,0);
    
    \draw[pattern=north east lines, pattern color=blue] (0,1/2) -- (3/2,2) -- (-2,2) -- (-2,1/2) -- (0,1/2);
    
    \draw[pattern=north east lines, pattern color=blue] (1/2,-1/2) -- (-1,-2) -- (3,-2) -- (3,-1/2) -- (1/2,-1/2);
    
    \draw[pattern=north west lines, pattern color=red] (3/2,0) -- (-1/2,-2) -- (3,-2) -- (3,0) -- (3/2,0);
    
    \draw[pattern=north west lines, pattern color=red] (1,1) -- (2,2) -- (-2,2) -- (-2,1) -- (1,1);
    
    \draw[pattern=north east lines, pattern color=green] (2,3/2) -- (5/2,2) -- (-2,2) -- (-2,3/2) -- (2,3/2);
    
    \draw[pattern=north east lines, pattern color=green] (5/2,1/2) -- (0,-2) -- (3,-2) -- (3,1/2) -- (5/2,1/2);

    \draw[pattern=north east lines, pattern color=brown] (2,1) -- (2,2) -- (3,2) -- (3,1) -- (2,1);

        \filldraw[black] (3/2,1/2) circle (2pt) node[anchor= north] {\scriptsize (3,1)};
        
        \filldraw[black] (3/2,1) circle (2pt) node[anchor= east] {\scriptsize (3,2)};

        \filldraw[black] (2,1) circle (2pt) node[anchor= south] {\scriptsize(4,2)};

        \filldraw[black] (1,1/2) circle (2pt) node[anchor= east] {\scriptsize (2,1)};

        \filldraw[orange] (1,1/2) circle (1pt) node[anchor= east] {};

        \filldraw[orange] (3/2,1/2) circle (1pt) node[anchor= north] {};

    \end{tikzpicture} }{
    \caption{\footnotesize This is the picture of the regions $\Gamma_0,\Gamma_1,\Gamma_{\Res},\Gamma \subset \Pic(X_{\Sigma}) = \mathbb{Z}^2$ (the latter being defined in Section \ref{sec:resultants}, \eqref{gammak2}). The blue region corresponds to $\Gamma_0$, the red region corresponds to $\Gamma_1$, the green region corresponds to $\Gamma_{\Res}$ and the brown region corresponds to $\Gamma$. We marked in orange those $\alpha$ with $(I^{\sat}/I)_{\alpha} \neq 0$. We derived the local cohomology of $\mathcal{H}_1$ from \cite{altmann2018immaculate}; see also \cite{eisenbud2000cohomology,botbol2011implicit}.}  \label{figure1}} 

 \end{floatrow}
\end{figure}

\noindent Figure \ref{figure1} describes the supports $\Gamma_0,\Gamma_1,\Gamma_{\Res}$. We deduce that elimination matrices $\mathbb{H}_{\alpha}$ are obtained for $\alpha \in \{ (4,2), (3,2), (3,1), (2,1)\}$. In the cases $\alpha=(4,2)$ and $\alpha=(3,2)$, we get two Macaulay-type matrices. The two other cases give the following matrices:

\begin{itemize}

\item Case $\alpha = (3,1)$. This matrix corresponds to $\alpha = \delta$ and in this case, we are introducing a Sylvester form. This form is $\Sylv_0$ and can be computed, as before, by a determinant that we write as:

\[ \det\begin{blockarray}{ccc}
\begin{block}{(ccc)}
 a_1z_1z_2 + a_2x_1z_2 + a_4x_2 &  a_3z_1 & a_0z_1\\
 b_1z_1z_2 + b_2x_1z_2 + b_4x_2 &  b_3z_1 & b_0z_1\\
 c_1z_1z_2 + c_2x_1z_2 + c_4x_2 &  c_3z_1 & c_0z_1\\
\end{block}
\end{blockarray} = [130]z_1^3z_2 + [230]x_1z_1^2z_2 + [430]x_2z_1^2, \] 
where $[ijk] = \scalemath{0.8}{
\det\begin{blockarray}{ccc}
\begin{block}{(ccc)}
 a_i & a_j & a_k \\
 b_i & b_j & b_k \\
 c_i & c_j & c_k  \\
\end{block}
\end{blockarray}}.$ Therefore, the elimination matrix $\mathbb{H}_{\cc{(3,1)}}$ is of the form:
$$
\mathbb{H}_{(3,1)}=
\setcounter{MaxMatrixCols}{9}
\begin{pmatrix} \Transpose{
a_0 & a_1 & a_2 & 0 & a_3 & a_4 & 0 \\
0 & a_0 & a_1 & a_2 & 0 & a_3 & a_4 \\
b_0 & b_1 & b_2 & 0 & b_3 & b_4 & 0 \\
 0 & b_0 & b_1 & b_2 & 0 & b_3 & b_4 \\
c_0 & c_1 & c_2 &  0 & c_3 & c_4 & 0 \\
0 & c_0 & c_1 & c_2 & 0 & c_3 & c_4 \\
[130] & [230] & 0 & 0 & [430] & 0 & 0
}
\end{pmatrix}.
$$
This type of matrices for $\alpha = \delta$ were already known from \cite{cattani1997residuesresultants} as the $\Delta_i$'s are all equal and ample in $\mathcal{H}_1$. %Nevertheless, we note that the block of Sylvester forms is different with our construction; \lb{in this example} it is more sparse. \lb{QUESTION: shall we remove this last sentence? - not very clear/concrete what we state.}

\item Case $\alpha = (2,1)$. We obtain the following matrix $\mathbb{H}_{\cc{(2,1)}}$ which is built from two different Sylvester forms:

\[
\mathbb{H}_{(2,1)}=
\begin{pmatrix} \Transpose{
a_0 & a_1 & a_2 & a_3 & a_4 \\
b_0 & b_1 & b_2 & b_3 & b_4 \\
c_0 & c_1 & c_2 & c_3 & c_4 \\
\left[013\right] & \left[023\right] + [014] & [024] & 0 & 0 \\
\left[023\right] & [024] + [123] & [124] & 0 & 0
}
\end{pmatrix} 
\]
that correspond to the monomial basis $\{z_1,x_1\}$ of $C_{\nu}$ for $\nu = (1,0)$. As far as we know, \lb{these matrices did not appear in the existing literature}.  
\end{itemize}

\end{example}

\begin{example}
\label{example:h3}
Consider again Example \ref{example:h2} with the same $F_0,F_1$ as in \eqref{eq:F0F1F2} but suppose now that $\alpha_2 = (1,1)$ and thus the corresponding generic homogeneous sparse polynomial  $F_2$ is:
\begin{align}F_2 &= c_0z_1z_2 + c_1x_1z_2 + c_3x_2.
\end{align}
In this case, the Newton polytopes $\Delta_i$'s are not scaled copies of a fixed ample class and $\alpha_2$ is not even ample in $\mathcal{H}_1$. However, \cc{the polytopes $\Delta_i$ are $n$-dimensional. Therefore, Corollary \ref{corollarydecomposition} and Theorem \ref{prop:Helimmat} imply that $\mathbb{H}_{\delta}$ is an elimination matrix for $\delta = (2,1)$. T}he corresponding Sylvester form is \[ \det\begin{blockarray}{ccc}
\begin{block}{(ccc)}
 a_1z_1z_2 + a_2x_1z_2 + a_4x_2 &  a_3z_1 & a_0z_1\\
 b_1z_1z_2 + b_2x_1z_2 + b_4x_2 &  b_3z_1 & b_0z_1\\
 c_1z_2 &  c_3 & c_0\\
\end{block}
\end{blockarray} = [130]z_1^2z_2 + [230]x_1z_1z_2 + [430]x_2z_1,
% \quad [ijk] = \scalemath{1}{
% \begin{blockarray}{ccc}
% \begin{block}{(ccc)}
%  a_i & a_j & a_k \\
%  b_i & b_j & b_k \\
%  c_i & c_j & c_k  \\
% \end{block}
% \end{blockarray}}\footnote{$c_i = 0$ \text{if this coefficient does not appear in} $F_2$}. 
\]
where 
$[ijk] := \scalemath{0.8}{
\det \begin{blockarray}{ccc}
\begin{block}{(ccc)}
 a_i & a_j & a_k \\
 b_i & b_j & b_k \\
 c_i & c_j & c_k  \\
\end{block}
\end{blockarray}}$, with the convention that $c_i = 0$ if this coefficient does not appear in $F_2$. Then, 
the corresponding elimination matrix is
\[
\mathbb{H}_{(2,1)}=
\begin{pmatrix} \Transpose{
a_0 & a_1 & a_2 & a_3 & a_4 \\
b_0 & b_1 & b_2 & b_3 & b_4 \\
c_0 & c_1 & 0 & c_3 & 0 \\
0 & c_0 & c_1 & 0 & c_3 \\
\left[013\right] & \left[023\right] + [014] & 0 & [024] & 0 
}
\end{pmatrix}.
\]
\lb{This example illustrates that we obtain the same type of matrices as in \cite{cattani1997residuesresultants} but under different assumptions: we are assuming that the polytopes $\Delta_0,\dots,\Delta_n$ are $n$-dimensional and $X_{\Sigma}$ has the $\sigma$-positive property, and in \cite{cattani1997residuesresultants} it is assumed that the $\alpha_i$'s are scaled copies of the same ample class.}

%\lb{QUESTION: there are other assumptions (?)... I would remove this last part and stop at $\alpha = \delta$}.
\begin{figure}[H]
    \centering
\begin{tikzpicture}

\draw[blue] (0,0) -- (1,0);
\draw[blue] (1,0) -- (0,1);
\draw[blue] (0,0) -- (0,1);

\filldraw[red] (0,0) circle (2pt) node[anchor=north] {\tiny (0,0)};
\filldraw[red] (1,0) circle (2pt) node[anchor=north] {\tiny (1,0)};
\filldraw[red] (0,1) circle (2pt) node[anchor=south] {\tiny (0,1)};

\end{tikzpicture}
    \caption{The polytope corresponding to the generic sparse homogeneous polynomial $F_2$ in Example \ref{example:h3}.}
    \label{fig:polytope2}
\end{figure}
\end{example}

As illustrated in Example \ref{example:h2}, the construction of elimination matrices $\mathbb{H}_\alpha$ requires the computation of the support of the local cohomology modules $H^i_{\mathfrak{b}}(R)$. This task can be delicate, although several results are known; \lb{for instance, see  \cite{altmann2018immaculate} for the cases where the fan $\Sigma$ splits or the rank of $\Pic(X_{\Sigma})$ is $2$ or $3$, or see also \cite{eisenbud2000cohomology,botbol2011implicit}}. In order to avoid such computations, the next result yields some sufficient conditions to get hybrid elimination matrices. 
\medskip

We recall that we are using the notation in Section \ref{sec:duality}. \cc{In particular, for $i = 0,\dots,n$}, we write $\alpha_i \in \Pic(X_{\Sigma})$ for the classes associated to the homogeneous polynomial system, $K_X$ for the anticanonical divisor \lb{and we set $\delta = \alpha_0 + \dots + \alpha_n - K_X$}. 

\begin{theorem}
\label{theo:deltapmnef} 
Assume that the toric variety $X_\Sigma$ is smooth and $\sigma$-positive for some maximal cone $\sigma \in \Sigma(n)$. Moreover, 
assume that \cc{the polytopes $\Delta_0,\dots,\Delta_n$ are $n$-dimensional. }If $\alpha \in \Pic(X_{\Sigma})$ satisfies either of the two following properties:
\begin{itemize}
    \item[i)] $\alpha = \delta + \nu$ with $\nu$ a nef class or,
    \item[ii)] $\alpha = \delta - \nu$, where $\nu$ \cc{is a nef class satisfying the hypotheses of Theorem \ref{theoremdecompositionrestated} and} \lb{for all $i = 0,\dots,n$, $\alpha_i - \nu$ is a nef class that corresponds to an $n$-dimensional polytope},
\end{itemize}
then $\mathbb{H}_\alpha$ is an elimination matrix. In addition, it is purely of Macaulay-type if and only if $\alpha$ satisfies \lb{i) but not ii)}. 
\end{theorem}
\begin{proof} First, \lb{recall that the notation $K_j(F)$ stands for the terms of the Koszul complex associated} to $F_0,\ldots,F_n$. We will also denote by $J$ subsets of $\{0,\dots,n\}$. For both cases, our strategy is to show that \cc{$\alpha \notin \Gamma_0 \cup \Gamma_1$ and $(I^{\sat}/I)_{\alpha} = \oplus_{\mu}A$ in order to apply Theorem \ref{prop:Helimmat}}. 

We begin with the case i) and pick $\alpha = \delta + \nu$ with $\nu$ a nef class. We have
\[H_{\mathfrak{b}}^i(K_j(F))_{\delta+\nu} \simeq H_{\mathfrak{b}}^i(\oplus_{|J| = j}C(-\sum_{l \in J}\alpha_l))_{\delta + \nu} \simeq \oplus_{|J| = j} H_{\mathfrak{b}}^i(C)_{\delta + \nu -\sum_{l \in J}\alpha_l} \quad i \geq 0, \: j = 0,\dots,n+1. \] \cc{Recall that} the local cohomology functors \lb{commute with direct sums}. Using \eqref{localsheaf} and \eqref{serreduality} for $i \geq 2$, we get:
\[ H_{\mathfrak{b}}^i(C)_{\delta + \nu -\sum_{l \in J}\alpha_l} \simeq H^{i-1}(X_{\Sigma},\sum_{l \notin J}\alpha_l - K_X + \nu) \simeq H^{n -i+1}(X_{\Sigma},-\sum_{l \notin J}\alpha_l - \nu).\]
\cc{If $J \neq \{0,\dots,n\}$ then the class of $\sum_{l \notin J}\alpha_l + \nu$ is nef and its associated polytope is $n$-dimensional. In this case, we can apply Theorem \ref{batyrevborisov} to deduce that 
$$H^{n -i+1}(X_{\Sigma},-\sum_{l \notin J}\alpha_l - \nu)=0 \textrm{ for } i\geq 2.$$
As  $H^i_{\mathfrak{b}}(C) = 0$ for $i = 0,1$ (see \eqref{remarkglobalsections}), it follows that
$$H_{\mathfrak{b}}^i(K_j(F))_{\delta+\nu} = 0 \textrm{ for all } j \neq n + 1$$
and hence, by definition of $\Gamma_0$ and $\Gamma_1$ (see \eqref{supportscohomology}), that $\delta + \nu \notin \Gamma_0 \cup \Gamma_1$. As a consequence, Theorem \ref{dualitytheorem} shows that:
$$ (I^{\sat}/I)_{\delta + \nu} = \Hom_A((C/I)_{-\nu},A).$$
As $\nu$ is nef (and also effective), Remark \ref{remarknegative} implies that $(C/I)_{-\nu}=0$ for all $\nu\neq 0$. On the other hand, Corollary \ref{corollarydecomposition} implies that and $(C/I)_{0}=A$. Therefore, 
$(I^{\sat}/I)_{\delta + \nu}=0$ for all $\nu$, except $\nu= 0$ where  we have $(I^{\sat}/I)_{\delta }\simeq A$. From here, Theorem \ref{prop:Helimmat} implies i).}

We proceed similarly to prove ii) and pick $\alpha = \delta - \nu$.  We have: \[ H_{\mathfrak{b}}^i(K_j(F))_{\delta-\nu} \simeq H_{\mathfrak{b}}^i(\oplus_{|J| = j}C(-\sum_{l \in J}\alpha_l))_{\delta - \nu} \simeq \oplus_{|J| = j} H_{\mathfrak{b}}^i(C)_{\delta - \nu -\sum_{l \in J}\alpha_l} \quad i \geq 0, \: j = 0,\dots,n+1 \]
and  using \eqref{localsheaf} and \eqref{serreduality}, for all $i\geq 2$ we get:
\[ H^{i}_{\mathfrak{b}}(C)_{\delta - \nu - \sum_{l \in J}\alpha_l} \simeq H^{i-1}(X_{\Sigma}, \sum_{l \notin J}\alpha_l - K_X - \nu) \simeq H^{n - i + 1}(X_{\Sigma}, \nu - \sum_{l \notin J}\alpha_l). \]
%= \begin{cases} \oplus_{m \in \Relint(\Delta_{\sum_{j \notin J}\alpha_j - \nu})}  & i = \dim(\Delta_{\sum_{j \notin J}\alpha_j - \nu})\\ 0 & \text{otherwise} \end{cases}\]
Our assumptions imply that the $\alpha_i - \nu$ are nef and their associated polytopes are $n$-dimensional for $i = 0,\dots,n$. Hence, if \cc{$J \neq \{0,\dots,n\}$, the classes $\sum_{l \notin J}\alpha_l - \nu$ are also nef and their associated polytopes are $n$-dimensional. Applying Theorem \ref{batyrevborisov}, we deduce that $$H_{\mathfrak{b}}^i(K_j(F))_{\delta-\nu} = 0 \textrm{ for all } j \neq n + 1,$$
\cc{hence $\delta - \nu \notin \Gamma_0 \cup \Gamma_1$. Applying Theorem \ref{dualitytheorem}, we deduce that}
 $(I^{\sat}/I)_{\delta - \nu} = \Hom_A((C/I)_{\nu},A).$
Finally, since $\nu$ satisfies the hypotheses of Theorem \ref{theoremdecompositionrestated}, we deduce from this theorem that $(C/I)_{\nu} = C_{\nu}$ and $(I^{\sat}/I)_{\delta - \nu}$ is a free $A$-module, which concludes the proof.}
\end{proof}
\begin{corollary}
Assume that the toric variety $X_\Sigma$ is smooth and $\sigma$-positive for some maximal cone $\sigma \in \Sigma(n)$. 
If \lb{the polytopes $\Delta_0,\ldots,\Delta_n$ are all} $n$-dimensional, then $\mathbb{H}_{\delta}$ is an elimination matrix.
\end{corollary}
\begin{proof} Apply Theorem \ref{theo:deltapmnef}, i) with $\nu=0$.
%\lb{As the} $\Delta_i$'s are $n$-dimensional,  by the same argument as in Corollary \ref{corollarydecomposition} and using the presentations of the polytopes of $\alpha_0,\dots,\alpha_n,\nu$ given in \eqref{eq:polytopes} and \eqref{eq:polytopesnu}, we \lb{deduce} that $0 = \nu_{n+j} < \min a_{i,n+j}$ for $j = 1,\dots,r$. Moreover, the class of $\nu = 0 \in \Cl(X_{\Sigma})$ is a nef divisor. Therefore, this \lb{is} the only case in which the hypotheses i) and ii) of Theorem \ref{theo:deltapmnef} are satisfied simultaneously, yielding the elimination matrix $\mathbb{H}_{\delta}$.
\end{proof}

%\begin{remark}
%The duality between $(I^{\sat}/I)_{\delta - \nu}$ and $\HF(C,\nu)$ in the previous result can be seen as an instance of multivariate Ehrhart reciprocity; see \cite[Theorem 2]{beckehrhart}. This result shows that if $\HP(C,-)$ is the multivariate Hilbert polynomial in $X_{\Sigma}$ corresponding to the number of lattice points in a polytope and $\HP^{\circ}(C,-)$ is the Hilbert polynomial associated to the number of lattice points in the interior of of a polytope, then \cc{ $\HP(C,\nu) = (-1)^n\HP^{\circ}(C,-\nu)$. In particular, we see that if $\delta - \nu$ satisfies the hypotheses of Theorem \ref{theo:deltapmnef}, then:
%$$(I^{\sat}/I)_{\delta - \nu} = H_{\mathfrak{b}}^{n+1}(C)_{-K_X - \nu} = H^n(X_{\Sigma}, -K_X - \nu)$$}
%\end{remark}

\begin{example}
Taking again Example \ref{example:h2}, we \lb{observe} that \lb{several} elimination matrices are obtained from Theorem \ref{theo:deltapmnef}. \cc{Indeed, the matrix $\mathbb{H}_{(4,2)}$ is of Macaulay-type and corresponds to case i) in this theorem.
The matrix $\mathbb{H}_{(2,1)}$ corresponds to case ii) while the matrix $\mathbb{H}_{(3,1)}$ corresponds to both cases i) and ii) ($\nu = 0$). However, the matrix $\mathbb{H}_{(3,2)}$ does not belong to either of the two cases. Using the explicit computation of $\Gamma_0$ and $\Gamma_1$ that we showed in Figure \ref{figure1}, we can derive that $(I^{\sat}/I)_{(3,2)} = \Hom((C/I)_{-(0,1)},A)$ where $(C/I)_{-(0,1)} = 0$ and thus, $\mathbb{H}_{(3,2)}$ is also an elimination matrix.}
\end{example}

\subsection{Overdetermined sparse polynomial systems}
\label{sec:overdetermined}

In this section we extend the construction of hybrid elimination matrices to the case of homogeneous polynomial systems that are defined by $\ccc{t} + 1$ equations with $\ccc{t} \geq n$. \lb{Such systems often appear in practical applications and are referred to as overdetermined polynomial systems}. 

% The key property we will prove is that all Sylvester forms built from subsets of $n+1$ polynomial equations give a generating set of suitable graded components of inertia forms. This is the content of Theorem \ref{overdeterminedeliminationmatrix}.

\begin{notation} We assume that the projective toric variety $X_{\Sigma}$ is smooth and $\sigma$-positive for some maximal cone $\sigma$. 
 In what follows, $F_0,\dots,F_{t}$ are generic homogeneous sparse polynomials corresponding to nef classes $\alpha_0,\dots,\alpha_{t}$, $I$ denotes the ideal they generate and $B = C/I$ the corresponding quotient ring. For each subset $T \subset \{0,\dots,\ccc{t}\}$ of cardinality $n+1$, we set $I_T = (F_i \, : \, i \in T)$, $B_T = C/I_T$ and $\delta_T = \sum_{i \in T}\alpha_i - K_X$. We denote by  \lb{$\Sylv_{T,\mu}$} the Sylvester forms that can be formed from $\{F_i\}_{i\in T}$; see Section \ref{sec:sylvforms}. We also denote by $K_{\bullet}(F)$ the Koszul complex of $F_0,\ldots,F_{t}$ and by $K_{T,\bullet}(F)$ the Koszul complex \lb{of $\{F_i\}_{i\in T}$}.
\end{notation}

The following result is a generalization of \cite[Chapter 3, Proposition 3.23]{busebook} which deals with the particular case $X_\Sigma=\mathbb{P}^n$.

\begin{theorem}
\label{theoremoverdetermined} 
Using the previous notation, suppose that there exists a subset $S \subset \{0,\dots,\ccc{t}\}$ of cardinality $n+1$ and a nef class $\nu \in \Pic(X_{\Sigma})$ satisfying the hypotheses of Theorem \ref{theoremdecompositionrestated} such that
\[ 
\forall i \in S \quad j \notin S \quad \alpha_i - \alpha_j \text{ is nef and }  \forall i \in S \quad \alpha_i - \nu \cc{\text{ is nef and corresponds to an } n\text{-dimensional polytope}}.\]
Then, the set of Sylvester forms
\[\{\lb{\sylv_{T,\mu}} \, : \,  T \subset \{0,\dots,\ccc{t}\} \textrm { such that } |T| = n+1 \textrm{ and } x^\mu \in C_{\delta_T - \delta_S + \nu}\}\]
yields a generating set of the $A$-module $(I^{\sat}/I)_{\delta_S - \nu}$.  
\end{theorem}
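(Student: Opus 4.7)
The plan is to extend the \v{C}ech--Koszul spectral-sequence analysis of Section~\ref{sec:duality} to the overdetermined setting, identifying $(I^{\sat}/I)_{\delta_S - \nu}$ as a quotient of a top local cohomology module that splits naturally over the $(n+1)$-subsets $T\subset\{0,\ldots,r\}$.

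First I would check that each $\Sylv_{\mu, T}$ is well-defined and lies in $(I^{\sat})_{\delta_S-\nu}$. Setting $\nu_T := \delta_T - \delta_S + \nu$, so that $\delta_T - \nu_T = \delta_S - \nu$, the key observation is that $|S\setminus T|=|T\setminus S|$ permits a pairing: the hypothesis $\alpha_i-\alpha_j$ nef for $i\in S$, $j\notin S$ yields $\sum_{i \in S\setminus T}\alpha_i-\sum_{j \in T\setminus S}\alpha_j$ nef, and combining this with $\alpha_i-\nu$ nef for $i\in S$ gives $\alpha_i-\nu_T$ nef for every $i\in T$. Together with the positivity of $X_\Sigma$, this makes Theorem~\ref{theoremdecompositionrestated} applicable to the $T$-subsystem with degree $\nu_T$ whenever $C_{\nu_T}\neq 0$, so $\Sylv_{\mu,T}\in (I_T^{\sat})_{\delta_S-\nu}\subset (I^{\sat})_{\delta_S-\nu}$ and $\sylv_{\mu,T}$ is the image in $(I^{\sat}/I)_{\delta_S-\nu}$ under the canonical map $I_T^{\sat}/I_T\to I^{\sat}/I$.

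Next I would apply the two spectral sequences of $\mathcal{C}^\bullet_{\mathfrak{b}}(K_\bullet(F))$ for the full system $F_0,\ldots,F_r$. Lemma~\ref{lemmaregularsequence} extends verbatim to show that any $n+1$ of the $F_i$'s form a regular sequence on $C_\sigma$, so the horizontal-first spectral sequence still puts $I^{\sat}/I$ on the relevant anti-diagonal. In the vertical-first spectral sequence, the summand $K_{n+1}(F)=\bigoplus_{|T|=n+1}C(-\sum_{i\in T}\alpha_i)$ together with the identity $\delta_S-\nu-\sum_{i\in T}\alpha_i=-(\nu_T+K_X)$ and toric Serre duality yield
\[ H^{n+1}_{\mathfrak{b}}(K_{n+1}(F))_{\delta_S-\nu}\;\simeq\;\bigoplus_{|T|=n+1} H^0(X_\Sigma,\nu_T)^\vee\;\simeq\;\bigoplus_{|T|=n+1}\Hom_A(C_{\nu_T},A),\]
matching exactly the cardinality of the claimed generating set.

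The main work is to ensure that this is the only entry contributing to $(I^{\sat}/I)_{\delta_S-\nu}$: I would show $H^q_{\mathfrak{b}}(K_p(F))_{\delta_S-\nu}=0$ for $(p,q)$ away from the $(n+1,n+1)$-corner in the relevant half of the bicomplex. For $p>n+1$, routine Batyrev--Borisov vanishing applies once the twists $\sum_{i\in J}\alpha_i$ grow large. For $p<n+1$, one uses that $\sum_{i\notin J}\alpha_i-\nu$ is nef---again by the pairing trick combined with $\alpha_i-\nu$ nef for $i\in S$---and invokes Demazure vanishing on the Serre dual, exactly as in Theorem~\ref{theo:deltapmnef}(ii). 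Restricting the resulting surjection from the top entry onto each summand indexed by $T$ factors through $(I_T^{\sat}/I_T)_{\delta_S-\nu}$ and agrees, by Theorem~\ref{sylvesterformduality} for the $T$-subsystem, with the image of $\{\sylv_{\mu,T}\}_{x^\mu\in C_{\nu_T}}$. The main obstacle is the uniform verification of all these vanishings, with particular care for subsets $T$ for which $\nu_T$ fails to be nef: in such cases one must independently check $C_{\nu_T}=0$ (so the summand is already absent) or an ad hoc cohomology vanishing, so that only the $T$'s admitting a genuine decomposition actually contribute to the generating set.
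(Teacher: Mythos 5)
Your proposal follows essentially the same route as the paper: the two \v{C}ech--Koszul spectral sequences for the full system, Serre duality with Batyrev--Borisov/Demazure vanishing to isolate the corner $H^{n+1}_{\mathfrak{b}}(K_{n+1}(F))_{\delta_S-\nu}\simeq\oplus_{|T|=n+1}\Hom_A(C_{\nu_T},A)$, and compatibility with the sub-Koszul complexes $K_{T,\bullet}(F)$ (which the paper packages as a commutative square of transgression maps induced by $\oplus_T K_{T,\bullet}(F)\to K_\bullet(F)$) to identify the generators with the $\sylv_{\mu,T}$. One small correction: the extension of Lemma \ref{lemmaregularsequence} in fact shows that \emph{all} $r+1$ polynomials form a regular sequence on $C_\sigma$ (each $F_i$ carries its own vertex coefficient), which is what the horizontal spectral sequence requires; knowing only that every $n+1$ of them form a regular sequence would give depth $n+1$ and would not force the higher Koszul homology to be supported on $V(\mathfrak{b})$.
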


\begin{proof} First, we use Serre duality and Theorem \ref{batyrevborisov} in order to compute the local cohomology modules $H_{\mathfrak{b}}^i(K_j(F))_{\delta_S - \nu}$, for $i = 0,\dots,n+1$ and $j = 0,\dots,n$, similarly to what we did in Theorem \ref{theo:deltapmnef}. Namely, for $i \geq 2$ we get
$$H_{\mathfrak{b}}^i(K_j(F))_{\delta_S - \nu} \simeq \oplus_{|T| = j}H_{\mathfrak{b}}^i(C(-\sum_{k \in T}\alpha_k))_{\delta_S - \nu} \simeq \oplus_{|T| = j}H^{n + 1 - i}(X_{\Sigma}, \sum_{k \in T}\alpha_k - \sum_{k' \in S}\alpha_{k'} + \nu). $$
As we assumed that $j < n + 1$, we can show that the previous cohomology module is of the form $H^{n + 1 -i}(X_{\Sigma},-\alpha)$ for $\alpha$ a sum of nef divisors \cc{whose corresponding polytope is $n$-dimensional}. Namely, the elements in $S \cap T$ cancel each other, and the rest of elements $k' \in S$ can be either (i) paired up with $\alpha_k$ for $k \in T$ satisfying that $\alpha_k - \alpha_{k'}$ is nef, (ii) paired up with $\nu$ satisfying that $\alpha_{k'} - \nu$ is nef \cc{and the corresponding polytope is $n$-dimensional}, or (iii) they are nef themselves. Therefore, applying Theorem \ref{batyrevborisov} for $i \geq 2$ and Remark \ref{remarkglobalsections} for $i = 0,1$, we deduce:
\begin{equation}\label{lasttimeiwritethis}H_{\mathfrak{b}}^i(K_j(F))_{\delta_S - \nu} \simeq  0 \quad i = 0,\dots, n+1, \: j < n + 1. \end{equation}
As a consequence, from the comparison of the two spectral sequences that are considered in Theorem \ref{dualitytheorem}, we obtain the following transgression map, which is an isomorphism of graded modules: 
$$\tau: H_{n+1}(K_{\bullet}(F),H^{n+1}_{\mathfrak{b}}(C))_{\delta_S - \nu} \xrightarrow{\sim} H^0_{\mathfrak{b}}(B)_{\delta_S - \nu}.$$ 
For any $T \subset \{0,\dots,\ccc{t}\}$ of cardinalty $n + 1$, let $\tau_T$ be the corresponding transgression map for $K_{T,\bullet}(F)$ and $B_T$. For each of these Koszul complexes, we have a canonical morphism of complexes $K_{T,\bullet}(F) \xrightarrow[]{} K_{\bullet}(F)$ that induces:
\[ L_{\bullet}(F) = \bigoplus_{|T| = n + 1}K_{T,\bullet}(F) \xrightarrow[]{} K_{\bullet}(F).\]
It follows that there is a commutative diagram:
\begin{equation}\label{commdiagram}\begin{tikzcd}
\oplus_{|T| = n+1}H_{n+1}(K_{T,\bullet}(F), H_\mathfrak{b}^{n+1}(C))_{\delta_S - \nu} \arrow{r}{} \arrow[swap]{d}{\bigoplus_{T}\tau_T} & H_{n+1}(K_{\bullet}(F), H_\mathfrak{b}^{n+1}(C))_{\delta_S - \nu} \arrow{d}{\tau} \\
\oplus_{|T| = n+1}H^0_{\mathfrak{b}}(B_T)_{\delta_S - \nu} \arrow{r}{} & H_{\mathfrak{b}}^0(B)_{\delta_S - \nu}
\end{tikzcd}.\end{equation}
As the two vertical arrows are isomorphisms, in order to show that the bottom arrow is surjective, it is enough to show that the top arrow is surjective. For that purpose, we observe that $L_{n+1}(F) = K_{n+1}(F)$ by construction and also
\[ \oplus_{|T| = n+1}H_{n+1}(K_{T,\bullet}(F), H_\mathfrak{b}^{n+1}(C))_{\delta_S - \nu} = \ker(H_{\mathfrak{b}}^{n+1}(L_{n+1}(F)) \xrightarrow[]{} H_{\mathfrak{b}}^{n+1}(L_{n}(F)))_{\delta_S - \nu}. \]
However, by the same argument as in \eqref{lasttimeiwritethis}, $H_{\mathfrak{b}}^{n+1}(L_{n}(F))_{\delta_S - \nu} = 0$, so
\begin{equation}\label{equationl}\oplus_{|T| = n+1}H_{n+1}(K_{T,\bullet}(F), H_\mathfrak{b}^{n+1}(C))_{\delta_S - \nu} \simeq H_{\mathfrak{b}}^{n+1}(K_{n+1}(F))_{\delta_S - \nu}.\end{equation}
On the other hand,
\begin{multline*}
H_{n+1}(K_{\bullet}(F), H_\mathfrak{b}^{n+1}(C))_{\delta_S - \nu} \simeq \\ \ker(H_{\mathfrak{b}}^{n+1}(K_{n+1})_{\delta_S - \nu} \xrightarrow[]{} H_{\mathfrak{b}}^{n+1}(K_{n})_{\delta_S - \nu})/\im(H_{\mathfrak{b}}^{n+1}(K_{n+2})_{\delta_S - \nu} \xrightarrow[]{} H_{\mathfrak{b}}^{n+1}(K_{n+1})_{\delta_S -\nu}).  
\end{multline*}
As above, $H_{\mathfrak{b}}^{n+1}(K_{n})_{\delta_S-\nu} = 0$ and so
\begin{equation}
\label{equationkoszul}
H_{n+1}(K_{\bullet}(F), H_\mathfrak{b}^{n+1}(C))_{\delta_S -\nu} \simeq \\ H_{\mathfrak{b}}^{n+1}(K_{n+1})_{\delta_S - \nu}/\im(H_{\mathfrak{b}}^{n+1}(K_{n+2})_{\delta_S - \nu} \xrightarrow[]{} H_{\mathfrak{b}}^{n+1}(K_{n+1})_{\delta_S -\nu}).    
\end{equation}
By comparing \eqref{equationl} and \eqref{equationkoszul}, we see that the top map in the diagram \eqref{commdiagram} is surjective, as we wanted to prove. 
%Therefore, the lower one is also surjective. 
It follows that the basis of Sylvester forms of $\oplus_{|T| = n+1}H_{\mathfrak{b}}^0(B_T)_{\delta_S - \nu}$ is a set of generators of $H_{\mathfrak{b}}^0(B)_{\delta_S - \nu} = (I^{\sat}/I)_{\delta_S - \nu}$.
\end{proof}

We are now ready to extend the construction of hybrid elimination matrices to  overdetermined homogeneous polynomial systems. 

%\lb{Remark: We could put together definition and theorem below in this case i think.}

\begin{theorem}
\label{overdeterminedeliminationmatrix}We denote by $\mathbb{H}_{\alpha}$ the matrix of the following map:
\lb{
\begin{eqnarray}\label{hybrid-overdetermined}
    \left(\oplus_{i = 0}^nC(-\alpha_i)\right)_{\alpha} \bigoplus_{\substack{T \subset \{ 0,\dots,\ccc{t} \}, \, |T| = n+1,\\ x^\mu \in C_{\delta_T-\alpha }}} A & \rightarrow &  C_{\alpha}\\ \nonumber 
    (G_0,\ldots,G_n)\oplus (\ldots,\ccc{\ell}_{T,\mu},\ldots) &  \mapsto & \sum_{i = 0}^n G_iF_i + \sum_{\substack{T \subset \{ 0,\dots,\ccc{t} \}\\  |T| = n+1}}\sum_{x^\mu \in C_{\delta_T - \alpha}}\ccc{\ell}_{T,\mu}\Sylv_{T,\mu}	
\end{eqnarray}
where $\alpha=\delta_S-\nu$ and where $\ccc{\ell}_{\mu,T} \in A$ for all $\mu$ and $T$. Under the assumptions of Theorem \ref{theoremoverdetermined}, 
$\mathbb{H}_{\alpha}$ is an elimination matrix, where $\alpha=\delta_S-\nu$.
}
\end{theorem}

\begin{proof}
The proof goes along the same lines as the proof of Theorem \ref{prop:Helimmat} for the case $\ccc{t} = n$.
\end{proof}

\begin{example}
\label{example:h4}
Taking again the notation and the polynomials $F_0,F_1,F_2$ of  Example \ref{example:h2}, we add another polynomial of degree $\alpha_3 = (2,1)$ in $\mathcal{H}_1$ and write it in homogeneous coordinates as 
\[ F_3 = d_0z_1^2z_2 + d_1x_1z_1z_2 + d_2x_1^2z_2 + d_3x_2z_1 + d_4x_1x_2. \]
Following Theorem \ref{overdeterminedeliminationmatrix}, the matrix $\mathbb{H}_{\delta_S}$  for $\delta_S = (3,1)$ is
 \[
\centering
\setcounter{MaxMatrixCols}{13}
\begin{pmatrix} \Transpose{
a_0 & a_1 & a_2 & 0 & a_3 & a_4 & 0 \\
0 & a_0 & a_1 & a_2 & 0 & a_3 & a_4 \\
b_0 & b_1 & b_2 & 0 & b_3 & b_4 & 0 \\
 0 & b_0 & b_1 & b_2 & 0 & b_3 & b_4 \\
c_0 & c_1 & c_2 &  0 & c_3 & c_4 & 0 \\
0 & c_0 & c_1 & c_2 & 0 & c_3 & c_4 \\
d_0 & d_1 & d_2 &  0 & d_3 & d_4 & 0 \\
0 & d_0 & d_1 & d_2 & 0 & d_3 & d_4 \\
[130]_{abc} & [230]_{abc} & 0 & 0 & [430]_{abc} & 0 & 0 \\
[130]_{abd} & [230]_{abd} & 0 & 0 & [430]_{abd} & 0 & 0 \\
[130]_{acd} & [230]_{acd} & 0 & 0 & [430]_{acd} & 0 & 0 \\
[130]_{bcd} & [230]_{bcd} & 0 & 0 & [430]_{bcd} & 0 & 0 }
\end{pmatrix}
\]
where $[ijk]_{abc} = \scalemath{0.8}{
\begin{blockarray}{ccc}
\begin{block}{(ccc)}
 a_i & a_j & a_k \\
 b_i & b_j & b_k \\
 c_i & c_j & c_k  \\
\end{block}
\end{blockarray}}$, and $[ijk]_{abd}, [ijk]_{acd}, [ijk]_{bcd}$ defined accordingly. It is an elimination matrix for the overdetermined polynomial system defined by the polynomials $F_0,F_1,F_2$ and $F_3$.
\end{example}

We conclude this section with a comment on the computational impact of the hybrid elimination matrices obtained in Theorem \ref{overdeterminedeliminationmatrix}. Indeed, these matrices are intended for solving overdetermined 0-dimensional polynomial systems via eigenvalue and eigenvector computations, applicable over projective spaces, multi-projective spaces, or more broadly, smooth projective toric varieties that are $\sigma$-positive for a given maximal cone $\sigma$. In comparison with the more classical Macaulay-type matrices, hybrid elimination matrices are more compact. In particular, these matrices have a smaller number of rows, which is a key ingredient with respect to computational complexity. 
Indeed, this number of rows is controlled by the vanishing of the local cohomology modules at certain degrees, including the control of the saturation index of the homogeneous ideal $I(f)$ generated by general polynomials $f_0,\ldots,f_{t}$ of degrees $\alpha_0,\ldots,\alpha_{t}$. In the case of hybrid elimination matrices, the situation is similar with the difference that now one considers the homogeneous ideal generated by $f_0,\ldots,f_{t}$ and their toric Sylvester forms, whose saturation index is smaller than the one of $I(f)$. 

\ccc{Hybrid elimination matrices already appeared in the existing literature in various contexts and they are known to be more compact than Macaulay matrices. In order to be a little more concrete, we provide some specific examples in Table \ref{tab:numrows} to show explicitly the gain in terms of the number of rows.
We considered systems of 4 generic homogeneous  polynomials in four different settings of Newton polytopes and degrees, all corresponding to $3$-dimensional varieties.}

\begin{table}[h]

\begin{tabular}{||c || c | c|| c | c||} 
 \hline
 \multirow[c]{2}{*}[0in]{Type of system} & 
 \multicolumn{2}{c || }{degree $\alpha$}  &
 \multicolumn{2}{c ||}{number of rows} \\ \cline{2-5}
  & 
 Classical &  Hybrid  &
 Classical & Hybrid \\
 [0.5ex] 
 \hline\hline
 \text{Polynomials of deg.~$2$ in $\mathbb{P}^3$}& 5 & 3 &  56 & 20\\ 
 \hline
 \text{Polynomials of deg.~$10$ in $\mathbb{P}^3$} & 37 & 27 &  9880 & 4060 \\ 
 \hline
 \text{Polynomials of deg.~$(2,1)$ in $\mathbb{P}^2 \times \mathbb{P}^1$} & (6,3) & (4,2) &  112 & 45  \\
 \hline
 \text{Polynomials of deg. $\Delta \times [0,1]$ in $\mathcal{H}_1 \times \mathbb{P}^1$} & 3($\Delta \times [0,1]$) & 2($\Delta \times [0,1]$) & 88 & 36   \\
 \hline
\end{tabular}

\caption{The first column describes the type of system of 4 homogeneous polynomials which is considered. The second column provides the degree $\alpha$ for which the classical Macaulay-type matrices and the hybrid elimination matrices are constructed. The third column gives the corresponding number of rows of these two matrices. The Newton polytope $\Delta$ in the last row corresponds to the degrees of the polynomials considered in Example \ref{example:h2} .}
\label{tab:numrows}
\end{table}

\lb{We remark that} the number of columns of hybrid elimination matrices may increase fast when the number of equations is large compared to the dimension of the ground projective toric variety. Further work is needed to analyze if some toric Sylvester forms can be avoided or combined to gain in efficiency.  A more practical approach for  
future improvements would be to add Sylvester forms step by step (similarly to the ``degree-by-degree'' approach developed in \cite{yetbendertelen}) until the expected corank is achieved, or some other criterion needed to solve the polynomial systems is satisfied (see e.g.~\cite[Definition 2.1]{yetbendertelen}).

  \section{Sylvester forms and sparse resultants}\label{sec:resultants}

Resultants are central tools in elimination theory and there is a huge literature on various methods to compute them; see for instance  \cite{gkz1994,dandreadickenstein,weymannzelevinsky,dandrea2020cannyemiris,bender2021koszultype}. A classical result is that the sparse resultant can be computed as the determinant of certain graded components of the Koszul complex built from the considered polynomial system. In this section, we show that Sylvester forms can be incorporated in the  Koszul complex to obtain new expressions for the sparse resultant as the determinant of a complex. \cc{This construction extends results in  \cite[\S 2]{cattani1997residuesresultants} under the smoothness and $\sigma$-positive assumptions that we used in the previous sections}. 

In what follows, we assume that $X_{\Sigma}$ is a smooth projective toric variety which is $\sigma$-positive for some maximal cone $\sigma \in \Sigma(n)$. To begin with, we first recall briefly the definition of the sparse resultant. As in \eqref{nothomogenized}, given $F_0,\dots,F_n$ generic homogeneous sparse polynomials, we consider the dehomogenized polynomials $\Tilde{F}_i(x)$ by setting $z_1 = \dots = z_r = 1$. We recall that the supports of the $\Tilde{F}_i(x)$ are the $\mathcal{A}_i = \Delta_i \cap M$ for $i = 0,\dots,n$; see \eqref{nothomogenized}. The space of coefficients of the $\Tilde{F}_i$'s has a natural structure of multi-projective space $\prod_{i = 0}^n\mathbb{P}^{\mathcal{A}_i}$, as the \cc{zeros of} $\Tilde{F}_i = 0$ are not modified after multiplication by a nonzero scalar. Consider the incidence variety 
%The definition presented here does not rely on the use of homogeneous polynomials. However, this definition is the one provided in numerous articles addressing resultant formulas. A version of the sparse resultant for homogeneous polynomials can be found in \cite[Chapter 3, Section 3]{gkz1994} using that these polynomials correspond to sections of very ample sheaves in the toric variety $X_{\Sigma}$; or equivalently ample sheaves in the smooth case \cite[Theorem 6.1.15]{coxlittleschneck}. Thus, when we need to use the properties of this homogeneous resultant, we will assume that the polytopes $\Delta_i$ \textit{correspond to ample divisors}.
%\lb{LB: definition must be clear, not need to explian why we made a mistake in our previous version. I would remove the above paragraph in blue and go to the point: definition à la Gelfand, giving assumptions with precision.}
$$Z(\Tilde{F}) = \{x\times(\ldots,c_{i,m},\ldots) \in \mathbb{T}_N \times \prod_{i = 0}^n\mathbb{P}^{\mathcal{A}_i} \quad \Tilde{F}_0(x) = \cdots = \Tilde{F}_n(x) = 0\}$$ 
and let $\pi$ be
the canonical projection onto the second factor  
\begin{equation*}\pi: \mathbb{T}_N \times \prod_{i = 0}^n\mathbb{P}^{\mathcal{A}_i} \xrightarrow[]{} \prod_{i = 0}^n\mathbb{P}^{\mathcal{A}_i}.\end{equation*} 
 The \textit{sparse resultant}, denoted as $\Res_{\mathcal{A}}$, is a primitive polynomial defining the direct image $\pi_{*}(Z(\Tilde{F}))$. This polynomial is a power of an irreducible polynomial defining the closure of the image of $Z(\Tilde{F})$, i.e.~$\overline{\pi(Z(\Tilde{F})}$, if this is a hypersurface, and $1$ otherwise; see \cite[Section 3]{dandrea2020cannyemiris} where this irreducible polynomial is called the \textit{sparse eliminant}. Imposing that the $\Delta_i$'s are $n$-dimensional and the $\mathcal{A}_i$ span the lattice $M$ is sufficient for ensuring that the previous projection gives an hypersurface and that the sparse resultant is an irreducible polynomial; see \cite[Chapter 8]{gkz1994}. 

%In general, this power can also be explicitly computed in terms of mixed volumes; see \cite[Proposition 3.6]{dandrea2020cannyemiris}.

A classical method for computing the sparse resultant is to consider the determinant of the Koszul complex $K_\bullet(F)$ of the sequence of homogeneous  polynomials $F_0,\ldots,F_n$. \cc{An homogeneous version of the sparse resultant (see \cite[Chapter 3, Section 3]{gkz1994}) defined in terms of very ample sheaves (or equivalently ample in the smooth case \cite[Theorem 6.1.15]{coxlittleschneck}). Thus, when we need to use the properties of this homogeneous resultant, we will assume that the polytopes $\Delta_i$ \textit{correspond to ample divisors}; see \cite[Chapter 8, Proposition 1.5]{gkz1994} for a proof of the fact that $\Res_{\mathcal{A}}$ and this homogeneous resultant coincide.} Under these assumptions, we can compute $\Res_{\mathcal{A}}$ as the determinant of some graded pieces of the complex
\begin{equation}\label{koszulcomplex}
K_{\bullet}(F): K_{n+1} = C(-\sum \alpha_i) \xrightarrow[]{\partial_{n+1}} \dots \xrightarrow[]{\partial_3} K_2 = \oplus_{k,k'}C(-\alpha_k - \alpha_{k'}) \xrightarrow[]{\partial_2} K_1 =\oplus_{k}C(-\alpha_k) \xrightarrow{\partial_1} C.
\end{equation}
For all $\alpha \notin \Gamma_{\Res} \subset \Pic(X_{\Sigma})$, Remark \ref{rm:acyclicity} implies that the strand $K_\bullet(F)_\alpha$ is an acyclic complex of free $A$-modules and $H_0(K_\bullet(F)_\alpha)=B_\alpha$. Moreover, if we also consider $\alpha$ such that $(I^{\sat}/I)_{\alpha} = 0$, then $\det(K_\bullet(F)_\alpha)$ equals the sparse resultant $\Res_{\mathcal{A}}$ up to multiplication by a nonzero scalar; see \cite[Chapter 3, Theorem 4.2]{gkz1994} for proofs. 

In order to incorporate Sylvester forms in the above construction we proceed as follows. 
\lb{As in Definition \ref{def:H_alpha}, 
let $\alpha$ be such that $\left(I^{\sat}/I\right)_\alpha$ is a free $A$-module generated by Sylvester forms, i.e. $\left(I^{\sat}/I\right)_\alpha \simeq \oplus_{x^\mu \in C_{\delta - \alpha}} A$. We define the complex ${K^{\sat}_\bullet(F)}_\alpha$ as the graded strand $K_\bullet(F)_\alpha$ of the Koszul complex, where the map on  the right, namely $(K_1)_\alpha \rightarrow C_\alpha$ (see \eqref{koszulcomplex}), is replaced by the defining map \eqref{hybrid} of the hybrid elimination matrices. More precisely, ${K^{\sat}_\bullet(F)}_\alpha$ the following graded complex of free $A$-modules
\[ C(-\sum \alpha_i)_{\alpha} \xrightarrow[]{(\partial_{n+1})_\alpha} \ldots \xrightarrow[]{(\partial_3)_\alpha} \oplus_{k,k'}C(-\alpha_k - \alpha_{k'})_{\alpha} \xrightarrow[]{(\partial_2)_\alpha \oplus 0}  \oplus_{k}C(-\alpha_k)_{\alpha} \oplus_{x^\mu \in C_{\delta - \alpha}} A  \xrightarrow[]{(\partial_1)_\alpha \oplus \tau_\alpha} C_{\alpha},\]
where the map $(\partial_1)_\alpha \oplus \tau_\alpha$ is the map \eqref{hybrid}, $\tau_\alpha$ denoting the map from $\oplus_{x^\mu \in C_{\delta - \alpha}} A$ to $C_\alpha$ corresponding to the Sylvester forms. By definition, we notice that $H_i(K_{\bullet}^{\sat}(F)_{\alpha})=H_i(K_{\bullet}(F)_{\alpha})$ for all $i\geq 2$. Moreover, we also see that $H_1(K_{\bullet}^{\sat}(F)_{\alpha})\simeq H_1(K_{\bullet}(F)_{\alpha})$, because $\tau_\alpha$ is injective by property of the Sylvester forms, and that $H_0(K_{\bullet}^{\sat}(F)_{\alpha})=(B^{\sat})_\alpha$.
}

\begin{theorem} \label{resultantformula} Assume that $X_\Sigma$ is a smooth projective toric variety which is $\sigma$-positive for a maximal cone $\sigma$ \cc{and that the classes $\alpha_0,\dots,\alpha_n$ are ample}. For every $\alpha \notin \Gamma_{\Res}$, $K_{\bullet}^{\sat}(F)_{\alpha}$ is an acyclic complex of free $A$-modules. Moreover, if $\alpha = \delta - \nu$ as in Theorem \ref{theo:deltapmnef} ii), then $\det(K^{\sat}_\bullet(F)_\alpha)$ is equal to $\Res_{\mathcal{A}}$ up to a nonzero multiplicative scalar in $\mathbf{k}$. 
\end{theorem}

\begin{proof} \lb{By construction}, the acyclicity of $K_{\bullet}^{\sat}(F)_{\alpha}$ follows from the acyclicity of the usual Koszul complex (see above). \lb{Moreover, since } $H_0(K_{\bullet}^{\sat}(F)_{\alpha})=(B^{\sat})_\alpha$, we deduce that $\det(K_\bullet^{\sat}(F)_\alpha)$ and $\Res_{\mathcal{A}}$ are two polynomials in $A$ that vanish under the same specializations in $\mathbf{k}$. %$(see Theorem \ref{prop:Helimmat} and \cite[Appendix A]{gkz1994} for properties of determinants of complexes).
In order to show that they are the same polynomial, we will proceed by comparing their degrees \cc{with respect to the coefficients of $F_i$ for $i = 0,\dots,n$. For the sake of simplicity, we proceed by computing the degree of these polynomials with respect to the coefficients of $F_0$, and denote this degree as $\deg_{F_0}$.} As proved in \cite[Appendix A]{gkz1994},  the determinant of a complex of vector spaces $V_{\bullet}: V_{n+1} \xrightarrow[]{} \dots \xrightarrow[]{} V_1 \xrightarrow[]{} V_0$ is given by the formula 
\begin{equation}
\label{determinantofacomplex}
\det(V_\bullet) = \bigotimes_i\bigwedge^{\cc{\dim(V_i)}}V_i^{(-1)^i}.
\end{equation} 
Regarding the degree computation for $K_{\bullet}(F)$, the terms of the Koszul complex are $\mathbf{k}$-vector spaces tensored with $A$, thus we can apply \eqref{determinantofacomplex} to this complex of $A$-modules. The degree of $\bigwedge^{\dim(K_j(F))}K_j(F)_{\alpha}$ with respect to the coefficients of $F_0$ is:
\begin{equation}
    \sum_{J \subset \{0,\dots,n\}, \, |J| = j}^{0 \in J} \HF(R,\alpha - \sum_{j \in J}\alpha_j).
\end{equation}
For $\alpha \gg 0$, we have $(I^{\sat}/I)_{\alpha} = 0$ and $\HF(R,\alpha) = \HP(R,\alpha)$. Therefore, the degree of the determinant of the complex $K_{\bullet}^{\sat}(F)_\alpha$ coincides with the degree of the resultant and we can compute:
%one for each subset $J \subset \{0,\dots,n\}$ of size $j$ such that $0 \in J$. When we calculate the rank of the terms of $K_{\bullet}^{\sat}(F)$, we can write the dimensions of the graded pieces $R_{\alpha}$ as a $\mathbf{k}$-vector space, namely $\HF(R,\alpha)$}. In our setting, we know that $\det(K^{\sat}_{\bullet}(F)_{\alpha}) = \Res_{\mathcal{A}}$ (up to multiplication by a nonzero constant) if $(I^{\sat}/I)_{\alpha} = 0$, and also that $\HF(R,\alpha) = \HP(R,\alpha)$ for $\alpha \gg 0$ (component-wise). Therefore, for $\alpha \gg 0$,

\begin{equation}
\label{degreeresultant}
    \deg_{F_0}(\Res_{\mathcal{A}}) = \deg_{F_0}\det(K^{\sat}_{\bullet}(F)_{\alpha}) = \sum_{J \subset \{0,\dots,n\}}^{0 \in J}(-1)^{|J|}\HF(R,\alpha - \sum_{j \in J}\alpha_j) = \sum_{J \subset \{0,\dots,n\}}^{0 \in J}(-1)^{|J|}\HP(R,\alpha - \sum_{j \in J}\alpha_j).
\end{equation}
As the degree of the resultant with respect to the coefficients of $F_0$ is constant (and equal to the mixed volume of the polytopes $\Delta_1,\dots,\Delta_n$), the last term in \eqref{degreeresultant} is a constant polynomial in $\alpha$, so when we evaluate it at any $\alpha$, it will always be equal to $\deg_{F_0}(\Res_{\mathcal{A}})$. Therefore, for $\alpha = \delta - \nu$ as in the statement, we have $(I^{\sat}/I)_{\delta - \nu} = \Hom_A(C_{\nu},A) \neq 0$ and we can check that the difference of degrees between the previous alternate sum and the degree of the classical Koszul complex is compensated by adding $(I^{\sat}/I)_{\delta-\nu}$ \cc{ at the term $K_{1}$ of $K_{\bullet}(F)$ (and thus counted with sign $-1$ in the determinant of the complex)}:
\[\deg_{F_0}\det(K_{\bullet}(F)_{\delta - \nu}) - \deg_{F_0}(\Res_{\mathcal{A}}) = \sum^{0 \in J}_{J \subset \{0,\dots,n\}}(-1)^{|J|}\big(\HF(R,\delta - \nu - \sum_{j \in J}\alpha_j) - \HP(R,\delta - \nu - \sum_{j \in J}\alpha_j)\big).\]
Using Grothendieck-Serre formula \eqref{grserre}, we deduce that this coincides with the quantity
\[\sum^{0 \in J}_{J \subset \{0,\dots,n\}}(-1)^{|J|}\sum_{i = 0}^{n+1}(-1)^i\dim_{\mathbf{k}} H^i_{\mathfrak{b}}(R)_{\delta - \nu - \sum_{j \in J}\alpha_j}.\]
Under the hypotheses of Theorem \ref{theo:deltapmnef} ii), and using Theorem \ref{batyrevborisov}, we get  that all the summands in the above sum vanish except if $i = n+1$ and $J = \{0,\dots,n\}$. In this latter case, we have $H^{n + 1}_{\mathfrak{b}}(R)_{- K_X - \nu}$, which is counted with the sign $(-1)^{2(n+1)} = 1$ \cc{and has the same dimension as the rank of the free $A$-module $H^{n + 1}_{\mathfrak{b}}(C)_{- K_X - \nu}$}. Recalling the duality theorem, which holds under the hypotheses of Theorem \ref{theo:deltapmnef} ii), we have:
$$H^{n + 1}_{\mathfrak{b}}(C)_{-K_X - \nu} \simeq (I^{\sat}/I)_{\delta - \nu} \simeq \Hom_A(C_{\nu},A),$$
which concludes the proof as the degree of each of the Sylvester forms $\Sylv_{\mu}$ for $x^{\mu} \in R_{\nu}$ with respect to the coefficients of $F_0$ is $1$.
\end{proof}

From the above result, we can also identify cases where the matrices $\mathbb{H}_{\alpha}$ are square matrices, and therefore their determinant (in the usual sense of the determinant of a matrix) is equal to the sparse resultant, up to a nonzero multiplicative constant. For this purpose, we consider \begin{equation}
\label{gammak2}
    \Gamma = \Supp (\oplus_{k,k'}C(-\alpha_k-\alpha_{k'}))
\end{equation} to be the support of the term $K_2(F)$ in the Koszul complex; see Figure \ref{figure1} for an example.

\begin{corollary}\label{cor:residue}
 Let $X_\Sigma$ be a smooth projective toric variety which is $\sigma$-positive for a maximal cone $\sigma$. Assume that $\Delta_0,\ldots,\Delta_n$ correspond to ample divisors. Then, for any $\alpha \notin \Gamma \cup \Gamma_{\Res} \cup \Gamma_0 \cup \Gamma_1$ we have $\det(\mathbb{H}_{\alpha}) = \Res_{\mathcal{A}}$, up multiplication by a nonzero scalar.
\end{corollary}
\begin{proof}
If $\alpha \notin \Gamma_{\Res} \cup \Gamma_0 \cup \Gamma_1$, then $(I^{\sat}/I)_{\alpha}$ is free and $\Res_{\mathcal{A}} = \det(K_{\bullet}^{\sat}(F)_{\alpha})$ as in Theorem \ref{koszulcomplex}. If $\alpha \notin \Gamma$, then the complex $K^{\sat}_{\bullet}(F)_{\alpha}$ has only two terms and therefore $\det(K^{\sat}_{\bullet}(F)_{\alpha}) = \det(\mathbb{H}_{\alpha})$.
\end{proof}

\begin{remark}
\label{remarkminors}
Computing the determinant of a complex can be done using some techniques such as Cayley determinants (see \cite[Appendix A]{gkz1994}), but it is not very practical. However, Theorem \ref{resultantformula} yields new expressions of the sparse resultant as a ratio of two determinants if $\alpha \notin \Supp \oplus_{k,l,m} C(-\alpha_k-\alpha_l-\alpha_m)$; see \cite[Corollary 2.4]{cattani1997residuesresultants} for a combinatorial characterization of such case. 
\end{remark}

We close this section with a comment and an example related to the well-known Canny-Emiris formula. For Macaulay-type \lb{matrices} of the form $\mathbb{M}_{\alpha}$, the Canny-Emiris formula gives a  way to choose a nonzero minor {of maximal size}; see \cite{cannyemiris93} for the formula and \cite{dandrea2020cannyemiris} for a proof  that this minor is nonzero.  It remains an open problem to see whether the conditions \lb{in} the proof of the Canny-Emiris formula \cite{dandrea2020cannyemiris} coincide with the Cayley determinant for such a choice of a minor. In the case of hybrid elimination matrices $\mathbb{H}_{\alpha}$,  a similar formula has been explored in \cite{emirisdandreabivariate} for $n = 2$ and $\alpha = \delta$.

\begin{example}
\label{exampleresultant}
Let's consider the four matrices provided in Example \ref{example:h2}, which correspond to the cases $\alpha \in \{ (4,2), (3,2), (3,1), (2,1)\}$. The last three are square matrices while the first one is not. We have drawn the region $\Gamma$ in brown in Figure \ref{figure1}, in order to indicate the elements that provide a square matrix, as well as $\Gamma_{\Res}$, in green, for the acyclicity of the complex. For the Macaulay-type matrices, we can combinatorially describe a maximal minor of $\mathbb{M}_{(4,2)}$ using the Canny-Emiris formula; see \cite{cannyemiris93,dandrea2020cannyemiris}.
The matrix $\mathbb{M}_{(3,2)}$ is square, 
\[ \mathbb{M}_{(3,2)} = \setcounter{MaxMatrixCols}{9}
 \begin{pmatrix} \Transpose{
 a_0 & a_1 & a_2 & 0 & 0 & a_3 & a_4 & 0 & 0 \\
 0 & a_0 & a_1 & a_2 &  0 & 0 & a_3 & a_4 & 0 \\
 0 & 0 & a_0 & a_1 & a_2 &  0 & 0 & a_3 & a_4  \\
 b_0 & b_1 & b_2 & 0 & 0 &  b_3 & b_4 & 0 & 0  \\
  0 & b_0 & b_1 & b_2 & 0 & 0 & b_3 & b_4 & 0  \\
 0 & 0 & b_0 & b_1 & b_2 & 0 & 0 & b_3 & b_4 \\
 c_0 & c_1 & c_2 &  0 & 0 & c_3 & c_4 & 0 & 0\\
 0 & c_0 & c_1 & c_2 &  0 & 0 & c_3 & c_4 & 0\\
 0 & 0 & c_0 & c_1 & c_2 &  0 & 0 & c_3 & c_4}
\end{pmatrix},
 \]
 and it  might be obtained using a greedy approach to the same formula (see \cite{cannypedersen, checaemiris}), \lb{but as far as we know, there was no known certificate of its existence as a resultant formula}. The hybrid matrices for $\alpha = (3,1),(2,1)$ are square. \lb{More generally, for non-square hybrid matrices, a procedure for choosing a minor is known when $n = 2$ and $\alpha = \delta$; see \cite{emirisdandreabivariate}.}

\end{example}
%As an example, we consider the Hirzebruch surface $\mathcal{H}_2$ and give a list of matrices that appear in the elimination matrix of three polynomials in it.

\section{Toric residue of the product of two forms}
\label{sec:residues}

%\textcolor{red}{Careful with residues that are defined over a field only. However, in the paper by D'Andrea and Khetan the residue map is considered of the univesal ring of coefficients, actually its fraction field. In Jouanolou, it is even done over the ring itself, at the price to multiply by the resultant on the left, to that there is no need to go to the fraction field -- this is something we both discuss if I remmber correctly, talking about how the Jouanolou formula agrees with the D'Andrea-Khetan formula.}

Another topic for which Sylvester forms are of interest is the computation of toric residues. These objects were initially introduced by Cox as a way to relate the residue of a family of $n + 1$ forms to the integral of a certain \cc{differential} form in a toric variety $X_{\Sigma}$ (see \cite{coxtoricres}). Being given $F_0,\ldots,F_n$ generic homogeneous polynomials as in \eqref{eq:Fi}, and denoting by 
$K(A)$ the quotient field of the universal ring of coefficients $A$, Cox proved the existence of a residue map
\[\Residue_F: B_{\delta} \xrightarrow[]{} K(A)\]
(recall that $I=(F_0,\ldots,F_n)$ and $B=C/I$) which has the following property: for any specialization $\theta: A \xrightarrow[]{} \mathbf{k}$ (see Notation \ref{notSec5}) such that the specialized system $f_0 = \dots = f_n = 0$ has no solution in $X_{\Sigma}$, the residue map $\Residue_{f}: (R/I(f))_{\delta} \xrightarrow{} k$ is an isomorphism. Cox defined residue  maps through trace maps of Čech cohomology, but they can be characterized through the fact that, if there is no solution in $X_{\Sigma}$, $\rho(\sylv_0)$ is sent to $\pm 1 \in K$, so \cc{we can assume} $\Residue_F(\sylv_0) = \pm 1$. Many authors contributed formulas based on elimination matrices and resultants to compute residues  \cite{khetan2004combinatorial,dandreakhetanresidues,cattani1995residues,cattani1997residuesresultants} and also used them in other applications such as polynomial interpolation \cite{soprunovcodes} or mirror symmetry \cite{mirrorbatyrev}. In particular, in \cite{dandreakhetanresidues} an explicit formula for computing the toric residue of a form of degree $\delta$ as a quotient of two determinants ``\`a la Macaulay" is proved.

If a form $G$ of degree $\delta$ can be written as a product $G=PQ$, a natural question is to ask whether one can take advantage of this factorization in the computation of the residue of $G=PQ$ with respect to the polynomial system \lb{defined by} $F_0,\ldots, F_n$. In the case $X_\Sigma=\mathbb{P}^n$, Jouanolou proved  that this is possible by exploiting the duality between the degrees $\delta - \nu$ and $\nu$ of $P$ and $Q$,  respectively (see \cite[Proposition 3.10.27]{joanolouformesdinertie}\footnote{We notice that in \cite{joanolouformesdinertie} the residue is defined as a map to $A$, and not to $K(A)$, by multiplying with $\Res_{\mathcal{A}}$ in the image. \cc{In the case of ample divisors}, the product of the residue and the resultant lies in $A$; see \cite[Theorem 1.4]{cattani1997residuesresultants}.}). In what follows, we \cc{explore the generalization of} Jouanolou's formula to a general smooth projective toric variety $X_{\Sigma}$ which is $\sigma$-positive for a maximal cone $\sigma$, using toric Sylvester forms.

 Let $\mathbb{H}_{\delta - \nu}$ be an elimination matrix that satisfies the assumptions of Theorem \ref{theo:deltapmnef} ii), and let $\mathcal{H}_{\delta - \nu}$ be a nonzero maximal minor of $\mathbb{H}_{\delta - \nu}$ which contains the entire block built with Sylvester forms. Now, being given two generic forms $P \in C_{\nu}$ and $Q \in C_{\delta - \nu}$, we consider the matrix
\begin{equation}
\label{theta}
\Theta_{\delta- \nu} =
\begin{pmatrix}
\mathcal{H}_{\delta-\nu} & \mathbf{q} \\
\begin{matrix} 0 & \cc{(\mathbf{p})^T\mathcal{D}} \end{matrix} & 0
\end{pmatrix}
\end{equation}
where $\mathbf{p}$, respectively $\mathbf{q}$, stands for the vector of coefficients of $P$, respectively $Q$, \cc{and $\mathcal{D}$ is the matrix defined in \eqref{equationnotpairing}}. Recall that by the construction of the matrix $\mathbb{H}_{\delta - \nu}$, the matrix $\mathcal{H}_{\delta-\nu}$ is built as the join of a Macaulay-type block-matrix and another column-block matrix built from Sylvester forms. Thus, the row \cc{$(\mathbf{p})^T\mathcal{D}$} is aligned with the column-block built from Sylvester forms; see Example \ref{example:h4} for an illustration.
% \begin{remark}
% To practical effects, choosing a maximal minor once we have a specialized system is not a difficult task. However, finding a nonzero maximal minor for a generic system is a more difficult task; see Remark \ref{remarkminors}.
% \end{remark}

\medskip

 We first prove that the residue of the product of two monomials can be computed as a quotient. In what follows, we denote by \cc{
$\mathcal{H}_{\mu,\xi}$ the submatrix of $\mathcal{H}_{\delta - \nu}$ that is obtained by deleting the column corresponding to the monomial $x^{\mu} \in R_{\nu}$ and the row corresponding to the monomial $x^{\xi} \in C_{\delta-\nu}$. }
 \begin{lemma} Assume that $X_\Sigma$ is a smooth projective toric variety which is $\sigma$-positive for a maximal cone $\sigma$. 
Let $F_0,\dots,F_n$ be a system of homogeneous polynomials in $C$ as in \eqref{eq:Fi}, \cc{then for two monomials $x^{\mu} \in R_{\nu}$ and $x^{\xi} \in R_{\delta - \nu}$, 
\[\Residue_F(x^{\mu + \xi}) =  \frac{\sum_{x^{\mu'} \in C_{\mu} }(-1)^{\mu'}(-1)^{\xi}\mathcal{D}_{\mu,\mu'}\det(\mathcal{H}_{\mu',\xi})}{\det(\mathcal{H}_{\delta - \nu})},\]where $(-1)^{\mu'}$ (resp. $(-1)^{\xi}$) is set to $1$ if the relative position of the monomial $x^{\mu}$ (resp. $x^{\xi}$) in the columns (resp. rows) of $\mathcal{H}_{\delta - \nu}$ is even, otherwise it is set to $-1$.}
\end{lemma}
\begin{proof}
Let $H^{\xi}$ be the matrix obtained by multiplying the row of $\mathcal{H}_{\delta - \nu}$  corresponding to $x^\xi$ by the monomial $x^{\xi}$ itself. Then, by expanding the determinant along this row, one gets:
\[ x^{\mu}x^{\xi}\det(\mathcal{H}_{\delta - \nu}) = x^{\mu}\det(H^{\xi}) = x^{\mu}(\sum G_iF_i + \sum_{\mu' \in C_{\nu}}(-1)^{\mu}(-1)^{\xi}c_{\mu',\xi}\Sylv_{\mu'}). \]\cc{Then, using the matrix $\mathcal{D}$, we get that
$$x^{\mu}x^{\xi}\det(\mathcal{H}_{\delta - \nu}) = \sum x^{\mu}G_iF_i + \sum_{x^{\mu} \in C_{\mu} }(-1)^{\mu'}(-1)^{\xi}\mathcal{D}_{\mu,\mu'}c_{\mu',\xi}\Sylv_{0} \ \textrm{ modulo } I.$$ Taking residues, }we deduce that
\[\Residue_F(x^{\mu+\xi})\det(\mathcal{H}_{\delta-\nu}) =  \sum_{x^{\mu} \in C_{\mu} }(-1)^{\mu'}(-1)^{\xi}\mathcal{D}_{\mu,\mu'}c_{\mu,\xi}.\]
Finally, from the expansion of the determinant $\det(H^{\xi})$, we get that $c_{\mu',\xi} = \det(\mathcal{H}_{\mu',\xi})$. 
\end{proof}

We are now ready to prove the claimed formula for the residue of the product of two forms.
\begin{theorem}
\label{joanolouresidues}
Assume that $X_\Sigma$ is a smooth projective toric variety which is $\sigma$-positive for a maximal cone $\sigma$. 
Let $F_0,\dots,F_n$ be a system of homogeneous polynomials in $C$ as in \eqref{eq:Fi}, \lb{and suppose given} two forms $P \in C_{\nu}$ and $Q \in C_{\delta - \nu}$, \lb{then}
\[ \Residue_{F}(PQ) = \frac{\det(\Theta_{\delta - \nu})}{\det(\mathcal{H}_{\delta - \nu})}.\]
\end{theorem}

\begin{proof}
Write $P = \sum_{x^\mu \in C_\nu}p_{\mu}x^\mu$ and $Q = \sum_{x^\xi \in C_{\delta - \nu}}q_{\xi}x^{\xi}$. Then, by linearity of residues, we have:
\[ \Residue_F(PQ) = \sum_{x^\mu \in C_\nu, \, x^\xi \in C_{\delta - \nu}}p_{\mu}q_{\xi}\Residue_{F}(x^{\mu + \xi}) =  \frac{\sum_{\mu,\xi}\sum_{\mu'}\lb{(-1)^{\mu'}(-1)^{\xi}}p_{\mu}q_{\xi}\mathcal{D}_{\mu,\mu'}\det(\mathcal{H}_{\mu',\xi})}{\det(\mathcal{H}_{\delta - \nu})}.\]
The numerator is precisely the expansion of the determinant $\det(\Theta_{\delta - \nu})$ of the matrix defined in \eqref{theta}, firstly with respect to the last row and secondly with respect to the last column.
\end{proof}

\begin{example}
\label{example:residues}
In Example \ref{example:h2}, the elimination matrix \lb{$\mathbb{H}_{(2,1)}$} is square, therefore we take
\[ \mathcal{H}_{(2,1)} = \mathbb{H}_{(2,1)}=
\begin{pmatrix} \Transpose{
a_0 & a_1 & a_2 & a_3 & a_4 \\
b_0 & b_1 & b_2 & b_3 & b_4 \\
c_0 & c_1 & c_2 & c_3 & c_4 \\
\left[013\right] & \left[023\right] + [014] & [024] & 0 & 0 \\
\left[023\right] & [024] + [123] & [124] & 0 & 0
}
\end{pmatrix}.
\]
% \[ \mathcal{H}_{\delta - \nu} =
% \begin{blockarray}{ccccc}
% \begin{block}{(ccccc)} \Transpose{
% a_0 & a_1 & a_2 & a_3 & a_4 \\
% b_0 & b_1 & b_2 & b_3 & b_4 \\
% c_0 & c_1 & c_2 & c_3 & c_4 \\
% \left[013\right] & \left[023\right] + [014] & [024] & 0 & 0 \\
% \left[023\right] & [024] + [123] & [124] & 0 & 0
% }
% \end{block}
% \end{blockarray}.
% \]
Let $P = p_0z_1 + p_1x_1$ and $Q = q_0z_1^2z_2 + q_1z_1z_2x_1 + q_2z_2x_1^2 + q_3z_1x_2 + q_4x_1x_2$ be homogeneous forms in $C_{(1,0)}$ and $C_{(2,1)}$, respectively \cc{ and let $\mathcal{D}$ be the matrix in Remark \eqref{equationnotpairing} which is of the form $\mathcal{D} = \scalemath{0.8}{
\begin{blockarray}{cc}
\begin{block}{(cc)}
1 & 0 \\
\mathcal{D}_{01} & 1 \\
\end{block}\end{blockarray}}$,}
then
\[ \Theta_{(2,1)} = 
\begin{pmatrix} \Transpose{
a_0 & a_1 & a_2 & a_3 & a_4 & 0 \\
b_0 & b_1 & b_2 & b_3 & b_4 & 0\\
c_0 & c_1 & c_2 & c_3 & c_4 & 0 \\
\left[013\right] & \left[023\right] + [014] & [024] & 0 & 0 & p_0 + \mathcal{D}_{01}p_1 \\
\left[023\right] & [024] + [123] & [124] & 0 & 0 & p_1 \\
q_0 & q_1 & q_2 & q_3 & q_4 & 0
}
\end{pmatrix}. \]
$\mathcal{D}_{01}$ can be computed as in \eqref{equationresidue} and it is nonzero as $z_1 \notin (x_1^2, x_2, z_1z_2)$. \lb{Applying} Theorem \ref{joanolouresidues}, we deduce that $\Residue_{F}(PQ) = \frac{\det(\Theta_{(2,1)})}{\det(\mathcal{H}_{(2,1)})}$.
% \[ \Theta_{\delta - \nu} = 
% \begin{blockarray}{cccccc}
% \begin{block}{(cccccc)} \Transpose{
% a_0 & a_1 & a_2 & a_3 & a_4 & 0 \\
% b_0 & b_1 & b_2 & b_3 & b_4 & 0\\
% c_0 & c_1 & c_2 & c_3 & c_4 & 0 \\
% \left[013\right] & \left[023\right] + [014] & [024] & 0 & 0 & p_0 \\
% \left[023\right] & [024] + [123] & [124] & 0 & 0 & p_1 \\
% q_0 & q_1 & q_2 & q_3 & q_4 & 0
% }
% \end{block}
% \end{blockarray}. \]
For the sake of comparison, let us examine the formula we obtain by developing the product of $P$ and $Q$. In this case, we apply Theorem \ref{joanolouresidues} with $\delta = (3,1)$ and $\nu=0$, so we have to consider the matrix $\Theta_{(3,1)}$ which is of the form:
$$\Theta_{(3,1)} = \begin{pmatrix} \Transpose{
a_0 & a_1 & a_2 & 0 & a_3 & a_4 & 0 & 0 \\
0 & a_0 & a_1 & a_2 & 0 & a_3 & a_4 & 0\\
b_0 & b_1 & b_2 & 0 & b_3 & b_4 & 0 & 0\\
 0 & b_0 & b_1 & b_2 & 0 & b_3 & b_4 & 0\\
c_0 & c_1 & c_2 &  0 & c_3 & c_4 & 0 & 0\\
0 & c_0 & c_1 & c_2 & 0 & c_3 & c_4 & 0\\
[130] & [230] & 0 & 0 & [430] & 0 & 0 & 1 \\
p_0q_0 & p_0q_1 + p_1q_0 & p_0q_2 + p_1q_1 & p_1q_2 & p_0q_3 & p_0q_4 + p_1q_3 & p_1q_4 & 0
}
\end{pmatrix}$$
since the product $PQ$ is equal to
$$p_0q_0z_1^3z_2 + (p_0q_1 + p_1q_0)z_1^2z_2x_1 + (p_0q_2 + p_1q_1)z_1z_2x_1^2 + p_0q_3z_1^2x_2 
+ (p_0q_4 + p_1q_3)z_1x_1x_2 + p_1q_2z_2x_1^3 + p_1q_4x_1^2x_2.$$    
  The expansion of $\det(\Theta_{(3,1)})$ with respect to the last row leads to the same formula as in \cite[Corollary 3.4]{dandreakhetanresidues}.                                                
\end{example}

\section*{Acknowledgments}

\lb{The authors} are grateful to Matías R. Bender, Marc Chardin and Carlos D'Andrea for interesting discussions. \lb{They} also thank Ioannis Z. Emiris, Christos Konaxis and Elias Tsigaridas for their support as advisors of the second author. \lb{The authors are also grateful to the reviewer whose comments and suggestions helped to improve significantly this paper}. This project has received funding from the European Union’s Horizon 2020 research and innovation programme  under the Marie Skłodowska-Curie grant agreement No 860843. 

\printbibliography

@article{grothendieckvanishing,
author = {Alexander Grothendieck},
title = {{Sur quelques points d'algèbre homologique, I}},
volume = {9},
journal = {Tohoku Mathematical Journal},
number = {2},
publisher = {Tohoku University, Mathematical Institute},
pages = {119 -- 221},
year = {1957},
doi = {10.2748/tmj/1178244839},
URL = {https://doi.org/10.2748/tmj/1178244839}
}

@phdthesis{telenthesis,

author = {Simon Telen},
title = {Solving systems of polynomial equations},
school= {KU Leuven, Faculty of Engineering Science},
year={2020}
}

@article{mirrorbatyrev,
author = {Batyrev, Victor V. and Materov, Evgeny},
year = {2002},
month = {04},
pages = {},
title = {Toric Residues and Mirror Symmetry},
volume = {2},
journal = {Mosc. Math. J.},
doi = {10.17323/1609-4514-2002-2-3-435-475}
}

@article{coxring,
author = "Cox, David A.",
year = "1995",
journal = "Journal of Algebraic Geometry",
pages = "17–50",
title = "The homogeneous coordinate ring of a toric variety",
number= "4"
}

@article{bender2021toric,
      title={Toric eigenvalue methods for solving sparse polynomial systems}, 
      author={Matías R. Bender and Simon Telen},
      year={2022},
      journal={Mathematics of Computation},
      eprint={2006.10654},
      volume={91},
      pages={2397-2429},
      doi= {10.1090/mcom/3744}
}

@article{dandrea2020cannyemiris,
      title={The Canny-Emiris conjecture for the sparse resultant}, 
      author={Carlos D'Andrea and Gabriela Jeronimo and Martín Sombra},
      year={2022},
      journal={Foundations of Computational Mathematics},
      doi={10.1007/s10208-021-09547-3}
}

@techreport{cannypedersen,
author = {John F. Canny and Paul Pedersen},
title = {An algorithm for the Newton resultant},
year = {1993},
institution = {Cornell University},
address = {USA},
}

@book{coxlittleschneck,
title = "Toric Varieties",
author = "David A. Cox and John B. Little and Hal Schenck",
journal = "Jahresbericht der Deutschen Mathematiker-Vereinigung",
volume = "114",
number = "3",
pages = "367-380",
year = "2012",
doi = "https://doi.org/10.1365/s13291-012-0048-9",

}

@article{coxtoricres,
author = {Cox, David A.},
year = {1996},
pages = {73-96},
title = {Toric residues},
volume = {34},
journal = {Arkiv for Matematik},
doi = {10.1007/BF02559508}
}

@article{botbol2011implicit,
title = {The implicit equation of a multigraded hypersurface},
journal = {Journal of Algebra},
volume = {348},
number = {1},
pages = {381-401},
year = {2011},
issn = {0021-8693},
doi = {https://doi.org/10.1016/j.jalgebra.2011.09.019},
author = {Nicolás Botbol}
}

@article{altmann2018immaculate,
      title={Immaculate line bundles on toric varieties}, 
      author = {Altmann, Klaus and Buczyński, Jarosław and Kastner, Lars and Winz, Anna-Lena},
      journal={Pure and Applied Mathematics},
      year={2020},
      volume={16},
      number={4},
      pages={1141–1217},
      doi={10.4310/PAMQ.2020.v16.n4.a12}
}

@article{maclagan2003uniform,
  title={Uniform bounds on multigraded regularity},
  author={Diane Maclagan and Gregory G. Smith},
  journal={Journal of Algebraic Geometry},
  year={2003},
  volume={14},
  pages={137-164}
}

@article{Jouanolou91,
author = {Jouanolou, Jean-Pierre},
year = {1991},
pages = {117--263},
title = {Le formalisme du résultant},
volume = {90},
number = {2},
journal = {Adv. Math.}
}

@article{Zariski,
	Author = {Zariski, Oscar},
	Doi = {10.2307/1989621},
	Fjournal = {Transactions of the American Mathematical Society},
	Issn = {0002-9947},
	Journal = {Trans. Amer. Math. Soc.},
	Number = {2},
	Pages = {249--265},
	Title = {Generalized weight properties of the resultant of {$n+1$} polynomials in {$n$} indeterminates},
	Url = {https://doi.org/10.2307/1989621},
	Volume = {41},
	Year = {1937},
	Bdsk-Url-1 = {https://doi.org/10.2307/1989621}
}

@article{emirismourrain,
title = {Matrices in elimination theory},
journal = {Journal of Symbolic Computation},
volume = {28},
number = {1},
pages = {3-44},
year = {1999},
issn = {0747-7171},
doi = {https://doi.org/10.1006/jsco.1998.0266},
url = {https://www.sciencedirect.com/science/article/pii/S0747717198902668},
author = {Ioannis Z. Emiris and Bernard Mourrain},
}

@article{khetan2004combinatorial,
affiliation = {University of Massachusetts, Department of Mathematics and Statistics, Amherst, MA 01003 (USA)},
author = {Khetan, Amit and Soprounov, Ivan},
journal = {Annales de l’institut Fourier},
keywords = {Toric varieties; toric residues; semi-ample degrees; facet colorings; combinatorial degree; toric variety; semi-ample divisor},
number = {2},
pages = {511-548},
publisher = {Association des Annales de l'Institut Fourier},
title = {Combinatorial construction of toric residues.},
url = {http://eudml.org/doc/116199},
volume = {55},
year = {2005},
}

@article{joanolouformesdinertie,
author = {Jouanolou, Jean-Pierre},
year = {1997},
month = {03},
pages = {119–250},
title = {Formes d'inertie et résultant: Un formulaire},
volume = {126},
journal = {Advances in Mathematics},
doi = {10.1006/aima.1996.1609}
}

@article{weymannzelevinsky,
author = {Weyman, Jerzy and Zelevinsky, Andrei},
year = {1992},
title = {Determinantal formulas for multigraded resultants.},
volume = {36},
journal = {Djursholm: Institut Mittag-Leffler},
doi = {10.1016/S0747-7171(03)00086-5}
}

@article{bender2021koszultype,
author = {Bender, Matías R. and Faugère, Jean-Charles and Mantzaflaris, Angelos and Tsigaridas, Elias},
title = {Koszul-type determinantal formulas for families of mixed multilinear systems},
journal = {SIAM Journal on Applied Algebra and Geometry},
volume = {5},
number = {4},
pages = {589-619},
year = {2021},
doi = {10.1137/20M1332190}
}

@article{article,
author = {Chen, Tianran and Li, Tien-Yien and Wang, Xiaoshen},
year = {2014},
month = {01},
pages = {213-242},
title = {Theoretical aspects of mixed volume computation via mixed subdivision},
volume = {14},
journal = {Communications in Information and Systems},
doi = {10.4310/CIS.2014.v14.n4.a1}
}

@article{soprunovcodes,
	doi = {10.1016/j.jpaa.2006.06.012},
  
	url = {https://doi.org/10.1016%2Fj.jpaa.2006.06.012},
  
	year = 2007,
	month = {may},
  
	publisher = {Elsevier {BV}
},
  
	volume = {209},
  
	number = {2},
  
	pages = {383--392},
  
	author = {Ivan Soprunov},
  
	title = {Global residues for sparse polynomial systems},
  
	journal = {Journal of Pure and Applied Algebra}
}

@book{gkz1994,
    author = {Israel M. Gelfand and Mikhail M. Kapranov and Andrei V. Zelevinsky},
    year = {1994},
    title = { Discriminants, resultants, and    multidimensional determinants},
    publisher = {Birkhäuser Boston, MA},
    doi={https://doi.org/10.1007/978-0-8176-4771-1}
}

@article{cannyemiris93,
author = {John F. Canny and Ioannis Z. Emiris},
year = {1993},
pages = {89–104},
title = {An efficient algorithm for the sparse mixed resultant},
volume = {673},
journal = { Applied
algebra, algebraic algorithms and error-correcting codes},
doi = {10.1007/3-540-56686-4_36}
}

@article{yetbendertelen,
  doi = {10.48550/ARXIV.2105.08472},
  
  url = {https://arxiv.org/abs/2105.08472},
  
  author = {Bender, Matías R. and Telen, Simon},
  
  title = {Yet another eigenvalue algorithm for solving polynomial systems},
  
  publisher = {arXiv},
  
  year = {2021},
  
  copyright = {Creative Commons Attribution 4.0 International}
}

@inproceedings{emirisdandreabivariate,
author = {D'Andrea, Carlos and Emiris, Ioannis Z.},
title = {Hybrid sparse resultant matrices for bivariate systems},
year = {2001},
isbn = {1581134177},
publisher = {Association for Computing Machinery},
address = {New York, NY, USA},
url = {https://doi.org/10.1145/384101.384106},
pages = {24–31},
numpages = {8},
location = {London, Ontario, Canada},
series = {ISSAC '01}
}

@article{demazure,
     author = {Demazure, Michel},
     title = {Sous-groupes alg\'ebriques de rang maximum du groupe de {Cremona}},
     journal = {Annales scientifiques de l'\'Ecole Normale Sup\'erieure},
     pages = {507--588},
     publisher = {Elsevier},
     volume = {4e s{\'e}rie, 3},
     number = {4},
     year = {1970},
     doi = {10.24033/asens.1201},
     mrnumber = {284446},
     zbl = {0223.14009},
     language = {fr},
     url = {http://www.numdam.org/articles/10.24033/asens.1201/}
}

@article{buse2021multigraded,
title = {Multigraded Sylvester forms, duality and elimination matrices},
journal = {Journal of Algebra},
volume = {609},
pages = {514-546},
year = {2022},
issn = {0021-8693},
doi = {https://doi.org/10.1016/j.jalgebra.2022.06.022},
author = {Busé, Laurent and Chardin, Marc and Nemati, Navid},
}

@inproceedings{checaemiris,
author = {Checa, Carles and Emiris, Ioannis},
title = {A greedy approach to the Canny-Emiris formula},
year = {2022},
publisher = {Association for Computing Machinery},
address = {New York, NY, USA},
doi = {10.1145/3476446.3536180},
booktitle = {Proceedings of ISSAC '22},
pages = {283–291},
numpages = {9},
keywords = {zonotopes, mixed subdivision, resultant theory, computer algebra, combinatorics},
location = {Villeneuve-d'Ascq, France},
}

@article{dandreadickenstein,
  title={Explicit formulas for the multivariate resultant},
  author={Carlos D'Andrea and Alicia Dickenstein},
  journal={Journal of Pure and Applied Algebra},
  year={2000},
  volume={164},
  pages={59-86}
}

@book{busebook,
  title={Algebraic curves and surfaces: a history of shapes},
  author={Laurent Busé and Fabrizio Catanese and Elisa Postinghel},
  publisher={Springer},
series={SISSA Springer Series},
  year={2023},
volume = {4},
isbn = {978-3-031-24151-2},
doi = {10.1007/978-3-031-24151-2}
}

@article{dandreakhetanresidues, title={Macaulay style formulas for toric residues}, volume={141}, DOI={10.1112/S0010437X05001326}, number={3}, journal={Compositio Mathematica}, publisher={London Mathematical Society}, author={D'Andrea, Carlos and Khetan, Amit}, year={2005}, pages={713–728}}

@article{busejouanolou,
  TITLE = {{On the discriminant scheme of homogeneous polynomials}},
  AUTHOR = {Busé, Laurent and Jouanolou, Jean-Pierre},
  URL = {https://hal.inria.fr/hal-00747930},
  JOURNAL = {{Mathematics in Computer Science}},
  PUBLISHER = {{Springer}},
  SERIES = {Special Issue in Computational Algebraic Geometry},
  VOLUME = {8},
  NUMBER = {2},
  PAGES = {175-234},
  YEAR = {2014},
  DOI = {10.1007/s11786-014-0188-7},
  KEYWORDS = {inertia forms ; Elimination theory ; discriminant of homogeneous polynomials ; resultant of homogeneous polynomials ; inertia forms.},
  PDF = {https://hal.inria.fr/hal-00747930v2/file/disc-mcs.pdf},
  HAL_ID = {hal-00747930},
  HAL_VERSION = {v2},
}

@article{cattani1995residues, title={Residues in toric varieties}, volume={108}, DOI={10.1023/A:1000180417349}, number={1}, journal={Compositio Mathematica}, publisher={London Mathematical Society}, author={Cattani, Eduardo and Cox, David A. and Dickenstein, Alicia}, year={1997}, pages={35–76}}

@article{cattani1997residuesresultants,
  title={Residues and resultants},
  author={Eduardo Cattani and Alicia Dickenstein and Bernd Sturmfels},
  journal={Journal of Mathematical Sciences},
  year={1997},
  volume={5},
  pages={119-148}
}

@article{chardinpowers,
  doi = {10.2140/ant.2013.7-1},
  
  author = {Marc Chardin},
  
  title = {Powers of ideals and the cohomology of stalks and fibers of morphisms},
  
  journal = {Algebra and Number Theory},
  
  year = {2013},
  
volume={7},

number={1},

pages={1-18}

}

@article{eisenbud2000cohomology,
author = {Eisenbud, David and Mustata, Mircea and Stillman, Michael},
year = {2000},
pages = {583-600},
title = {Cohomology on toric varieties and local cohomology with monomial support},
volume = {29},
journal = {J. Symb. Comput.},
doi = {10.1006/jsco.1999.0326}
}

\Addresses

\end{document}